\newtheorem{theorem}{Theorem}[section]
\newtheorem{lemma}[theorem]{Lemma}
\newtheorem{proposition}[theorem]{Proposition}
\newtheorem{corollary}[theorem]{Corollary}
\theoremstyle{plain}
\theoremstyle{definition}
\newtheorem{definition}[theorem]{Definition}
\numberwithin{equation}{section}
\renewcommand{\labelenumi}{\textup{(\theenumi)}}
\newcommand{\BF}{\operatorname{BF}}
\newcommand{\Homeo}{\operatorname{Homeo}}
\newcommand{\id}{\operatorname{id}}
\newcommand{\Ker}{\operatorname{Ker}}
\newcommand{\Ad}{\operatorname{Ad}}
\newcommand{\K}{\mathcal{K}}
\newcommand{\C}{\mathcal{C}}
\newcommand{\N}{\mathbb{N}}
\newcommand{\Z}{\mathbb{Z}}
\newcommand{\T}{\mathbb{T}}
\newcommand{\Zp}{{\mathbb{Z}}_+}
\def\R{{\mathcal{R}}}
\def\WRA{{\widetilde{\R}_A}}
\def\WRAf{{\widetilde{\R}_{A_f}}}
\title{Flow equivalence of topological Markov shifts 
and \\
Ruelle algebras }
\author{Kengo Matsumoto \\
Department of Mathematics \\
Joetsu University of Education \\
Joetsu, 943-8512, Japan
}
\date{}
\begin{document}
\maketitle

\def\det{{{\operatorname{det}}}}

\begin{abstract}
We  study discrete flow equivalence of two-sided topological Markov shifts 
by using extended Ruelle algebra.
We characterize flow equivalence of  two-sided topological Markov shifts 
in terms of conjugacy of certain actions weighted by ceiling functions
of two-dimensional torus on the stabilized extended Ruelle algebras
for the Markov shifts.
\end{abstract}

\def\R{{\mathcal{R}}}
\def\OA{{{\mathcal{O}}_A}}
\def\OB{{{\mathcal{O}}_B}}
\def\RA{{{\mathcal{R}}_A}}
\def\WRA{{\widetilde{\R}_A}}
\def\WRB{{\widetilde{\R}_B}}
\def\WRC{{\widetilde{\R}_C}}
\def\RAC{{{\mathcal{R}}_A^\circ}}
\def\RAR{{{\mathcal{R}}_A^\rho}}
\def\RTA{{{\mathcal{R}}_{A^t}}}
\def\RB{{{\mathcal{R}}_B}}
\def\FA{{{\mathcal{F}}_A}}
\def\FB{{{\mathcal{F}}_B}}
\def\FA{{{\mathcal{F}}_A}}
\def\FTA{{{\mathcal{F}}_{A^t}}}
\def\FB{{{\mathcal{F}}_B}}
\def\FTB{{{\mathcal{F}}_{B^t}}}
\def\DTA{{{\mathcal{D}}_{{}^t\!A}}}
\def\FDA{{{\frak{D}}_A}}
\def\FDTA{{{\frak{D}}_{{}^t\!A}}}
\def\FFA{{{\frak{F}}_A}}
\def\FFTA{{{\frak{F}}_{A^t}}}
\def\FFB{{{\frak{F}}_B}}
\def\OZ{{{\mathcal{O}}_Z}}
\def\V{{\mathcal{V}}}
\def\E{{\mathcal{E}}}
\def\G{{\mathcal{G}}}
\def\OTA{{{\mathcal{O}}_{A^t}}}
\def\OTB{{{\mathcal{O}}_{B^t}}}
\def\SOA{{{\mathcal{O}}_A}\otimes{\mathcal{K}}}
\def\SOB{{{\mathcal{O}}_B}\otimes{\mathcal{K}}}
\def\SOZ{{{\mathcal{O}}_Z}\otimes{\mathcal{K}}}
\def\SOTA{{{\mathcal{O}}_{A^t}\otimes{\mathcal{K}}}}
\def\SOTB{{{\mathcal{O}}_{B^t}\otimes{\mathcal{K}}}}
\def\DA{{{\mathcal{D}}_A}}
\def\DB{{{\mathcal{D}}_B}}
\def\DZ{{{\mathcal{D}}_Z}}

\def\SDA{{{\mathcal{D}}_A}\otimes{\mathcal{C}}}
\def\SDB{{{\mathcal{D}}_B}\otimes{\mathcal{C}}}
\def\SDZ{{{\mathcal{D}}_Z}\otimes{\mathcal{C}}}
\def\SDTA{{{\mathcal{D}}_{A^t}\otimes{\mathcal{C}}}}
\def\O2{{{\mathcal{O}}_2}}
\def\D2{{{\mathcal{D}}_2}}

\def\Max{{{\operatorname{Max}}}}
\def\Tor{{{\operatorname{Tor}}}}
\def\Ext{{{\operatorname{Ext}}}}
\def\Per{{{\operatorname{Per}}}}
\def\PerB{{{\operatorname{PerB}}}}
\def\Homeo{{{\operatorname{Homeo}}}}
\def\HA{{{\frak H}_A}}
\def\HB{{{\frak H}_B}}
\def\HSA{{H_{\sigma_A}(X_A)}}
\def\Out{{{\operatorname{Out}}}}
\def\Aut{{{\operatorname{Aut}}}}
\def\Ad{{{\operatorname{Ad}}}}
\def\Inn{{{\operatorname{Inn}}}}
\def\det{{{\operatorname{det}}}}
\def\exp{{{\operatorname{exp}}}}
\def\cobdy{{{\operatorname{cobdy}}}}
\def\Ker{{{\operatorname{Ker}}}}
\def\ind{{{\operatorname{ind}}}}
\def\id{{{\operatorname{id}}}}
\def\supp{{{\operatorname{supp}}}}
\def\co{{{\operatorname{co}}}}
\def\Sco{{{\operatorname{Sco}}}}
\def\Spm{{\operatorname{Sp}}_{\operatorname{m}}^\times}
\def\U{{{\mathcal{U}}}}
\def\OAf{{{\mathcal{O}}_{A_f}}}
\def\OTAf{{{\mathcal{O}}_{(A_f)^t}}}

\section{Introduction}
Flow equivalence relation in two-sided topological Markov shifts 
is one of most interesting and important equivalence relations
in symbolic dynamics as seen in lots of papers \cite{BF}, \cite{BH}, \cite{Franks}, \cite{PS}, etc.
Let $(\bar{X}_A,\bar{\sigma}_A)$ be the two-sided topological Markov shift
defined by an $N\times N$ irreducible matrix $A =[A(i,j)]_{i,j=1}^N$
with entries in $\{0,1\}.$ 
The shift space $\bar{X}_A$ consists of bi-infinite sequences 
$(x_n)_{n\in \Z} \in \{1,\dots,N\}^{\Z}$ of $\{1,\dots,N\}$ 
such that $A(x_n, x_{n+1}) =1$ for all $n \in \Z$. Take and fix a real number 
$\lambda_\circ$ such as 
$0<\lambda_\circ <1$.
The space is a compact metric space  by the metric defined by for 
$x =(x_n)_{n\in \Z} , y=(y_n)_{n\in \Z}$ with $x \ne y$
\begin{equation*}
d(x,y ) 
=  
\begin{cases}
1 & \text{ if  }  x_0 \ne y_0,\\
(\lambda_\circ)^{m} & \text{ if } m=\Max\{ n \mid x_k = y_k \text{ for all }k \text{ with }
|k| < n\}.
\end{cases}
\end{equation*}
The homeomorphism of  shift transformation 
$\bar{\sigma}_A$ on $\bar{X}_A$ is defined by
$\bar{\sigma}_A((x_n)_{n\in \Z}) = (x_{n+1})_{n\in \Z}. $  
Two topological Markov shifts  
$(\bar{X}_A,\bar{\sigma}_A)$
and
$(\bar{X}_B,\bar{\sigma}_B)$
are said to be flow equivalent if they are realized as cross sections with 
their first return maps of a common flow space.
 Parry--Sullivan in \cite{PS} proved that 
$(\bar{X}_A,\bar{\sigma}_A)$
and
$(\bar{X}_B,\bar{\sigma}_B)$
are  flow equivalent if and only if they are realized as discrete cross sections with 
their first return maps of a common topological Markov shift.
Cuntz--Krieger have first found that there is an interesting relation between 
flow equivalence of topological Markov shifts and 
certain purely infinite simple $C^*$-algebras called Cuntz--Krieger algebras  that they introduced in \cite{CK}.
For an irreducible matrix $A$ with entries in $\{0,1\}$, 
let $\OA$ be the Cuntz--Krieger algebra and $\DA$ its canonical maximal abelian $C^*$-subalgebra of $\OA$.
We denote by $\K$ and $\C$ the $C^*$-algebra of compact operators on the separable infinite dimensional Hilbert space $\ell^2(\N)$ and its commutative $C^*$-subalgebra of diagonal operators on $\ell^2(\N)$, respectively.
Cuntz--Krieger proved that for irreducible non-permutation matrices $A$ and $B$,    
if 
$(\bar{X}_A,\bar{\sigma}_A)$
and
$(\bar{X}_B,\bar{\sigma}_B)$
are  flow equivalent, then there exists an isomorphism
$\Phi:\OA\otimes\K \longrightarrow \OB\otimes\K$
of $C^*$-algebras 
such that $\Phi(\DA\otimes\C) = \DB\otimes\C.$ 
Its converse implication holds by \cite{MMKyoto}
(for more general matrices a similar assertion is shown in \cite{CEOR}).
They also proved in \cite{CK} that the first $K$-homology group
$K^1(\OA),$ which is isomorphic to the $K$-group $K_0(\OA)$
as groups,
appears as the Bown--Franks group $\BF(A)$ defined by Bowen--Franks in \cite{BF}, that is an invariant of flow equivalence 
of 
$(\bar{X}_A,\bar{\sigma}_A)$
(\cite{BF}).

There are another kinds of construction of $C^*$-algebras from two-sided topological Markov shifts by using groupoids regarding the Markov shifts as Smale spaces
(\cite{Bowen2},  \cite{Ruelle1}, \cite{Smale}, etc. ).
The construction has been  initiated by D. Ruelle \cite{Ruelle1}, \cite{Ruelle2} and I. Putnam \cite{Putnam1}, \cite{Putnam2}.
 They constructed several kinds of groupoids from each Smale space.
Each of the groupoids yields a $C^*$-algebra.
In this paper, we focus on asymptotic groupoids $G_A^a$ among several groupoids in
studying in  
\cite{KamPutSpiel}, \cite{Putnam1}, \cite{Putnam2}, \cite{PutSp}, etc.  and their semi-direct products
defined below.  
The asymptotic \'etale groupoid $G_A^a$ for $(\bar{X}_A,\bar{\sigma}_A)$
is defined by 
\begin{equation*}
G_A^a := \{ (x, y) \in \bar{X}_A\times \bar{X}_A \mid
   \lim_{n\to{\infty}}d(\sigma_A^n(x), \sigma_A^n(y)) 
 =\lim_{n\to{-\infty}}d(\sigma_A^n(x), \sigma_A^n(y))=0\}
\end{equation*}
with natural groupoid operations and topology (see \cite{Putnam1}). 
It has been shown in \cite{Putnam2} that the groupoid $G_A^a$ is amenable and its $C^*$-algebra
$C^*(G_A^a)$ is stably isomorphic to the tensor product 
$\FTA\otimes\FA$ of the canonical AF-subalgebras 
$\FTA$ and $\FA$  inside the Cuntz--Krieger algebras 
$\OTA$ and $\OA$, respectively.
The semi-direct product $G_A^a\rtimes\Z$ is defined by 
\begin{equation*}
G_A^a\rtimes\Z := 
\{ (x, k-l, y) \in \bar{X}_A\times\Z\times  \bar{X}_A \mid
(\bar{\sigma}_A^k(x),\bar{\sigma}_A^l(x)) \in G_A^a \}
\end{equation*}
with natural groupoid operations and topology (see \cite{Putnam1}). 
It is  \'etale and amenable. 
The groupoid $C^*$-algebra $C^*(G_A^a\rtimes\Z )$
 is called the Ruelle algebra for the Markov shift $(\bar{X}_A,\bar{\sigma}_A)$
and written $\RA$.
Since the unit space 
$(G_A^a\rtimes\Z)^\circ$ is 
$ \{ (x,0,x) \in G_A^a\rtimes\Z \mid x \in \bar{X}_A\}$
that is identified with the shift space $\bar{X}_A$,
the algebra $\RA$ has the commutative $C^*$-algebra $C(\bar{X}_A)$
of continuous functions on $\bar{X}_A$  as a maximal commutative $C^*$-subalgebra. 
It is the crossed product $C^*(G_A^a)\rtimes\Z $
of $C^*(G_A^a)$
induced by the  automorphism of the shift $\bar{\sigma}_A$,
and hence has the dual action written $\rho^A_t, t\in \T$.  
See \cite{Renault1} for general construction of $C^*$-algebras from groupoids.

Following \cite{Putnam1}, let us consider the groupoids $G_A^s$ and $G_A^u$ 
 defined by stable equivalence relation and unstable equivalence relation 
on $(\bar{X}_A,\bar{\sigma}_A)$, respectively,  which are defined by
\begin{align*}
G_A^s = & \{ (x, y) \in \bar{X}_A\times \bar{X}_A \mid
\lim_{n\to{\infty}}d(\bar{\sigma}_A^n(x), \bar{\sigma}_A^n(y) =0\}, \\
G_A^u = & \{ (x,y) \in \bar{X}_A\times \bar{X}_A \mid
\lim_{n\to{-\infty}}d(\bar{\sigma}_A^n(x), \bar{\sigma}_A^n(y) =0\}.
\end{align*}
In \cite{MaPre2017a},\cite{MaPre2017b},
the author introduced the following groupoid
written $G_A^{s,u}\rtimes\Z^2$:
\begin{equation*}
G_A^{s,u}\rtimes\Z^2:= 
\{(x,p,q,y) \in \bar{X}_A\times\Z\times  \Z\times  \bar{X}_A \mid
(\bar{\sigma}_A^p(x), y) \in G_A^s,
(\bar{\sigma}_A^q(x)),y) \in G_A^u \}
\end{equation*}
which has a natural groupoid operations and topology making it \'etale and amenable.
 The groupoid $C^*$-algebra 
$C^*(G_A^{s,u}\rtimes\Z^2)$ is called the extended Ruelle algebra  written $\WRA.$
Since the unit space 
$(G_A^{s,u}\rtimes\Z^2)^\circ$
is $\{(x,0,0, x) \in G_A^{s,u}\rtimes\Z^2 \mid x \in \bar{X}_A\}$
that is identified with the shift space $\bar{X}_A$, 
 the algebra $\WRA$ has $C(\bar{X}_A)$ as a maximal abelian $C^*$-subalgebra.
As in \cite{MaPre2017a},\cite{MaPre2017b}, 
there exists a projection
$E_A$ in the tensor product $\OTA\otimes\OA$ such that 
$E_A(\OTA\otimes\OA)E_A$ is naturally isomorphic to the algebra
$\WRA$, so that the $C^*$-algebra $\WRA$ is regarded as a version of bilateral 
Cuntz--Krieger algebra. 
Let $\alpha_A$ denote the gauge action on the Cuntz--Krieger algebra
$\OA$. 
Under the identification between 
$E_A(\OTA\otimes\OA)E_A$  and $\WRA$,
the tensor product 
$\alpha^{A^t}_r\otimes\alpha^A_s$ for $(r,s) \in \T^2$
yields an action
of $\T^2$ written $\gamma^A_{(r,s)}, (r,s) \in \T^2.$
In \cite[Theorem 1.1]{MaPre2017b},
it was shown that 
the triplet $(\WRA, C(\bar{X}_A), \gamma^A)$ is a complete invariant for the topological conjugacy class of $(\bar{X}_A,\bar{\sigma}_A)$.
For a continuous function $f:\bar{X}_A\longrightarrow \N,$ 
we may define an action $\gamma^{A,f}$  weighted by $f$
on $\WRA.$  
In this paper, we will characterize the flow equivalence class of
$(\bar{X}_A,\bar{\sigma}_A)$ in terms of the stabilized version of
$\WRA$ with the weighted action $\gamma^{A,f}.$
The continuous function $f:\bar{X}_A\longrightarrow \N$ 
exactly corresponds to a ceiling function of discrete suspension.
The main result of this paper is the following theorem.
\begin{theorem}[{Theorem \ref{thm:6.7}}] \label{thm:thm1.1}
Let $A, B$ be irreducible, non-permutation matrices with entries in $\{0,1\}.$
The two-sided topological Markov shifts 
$(\bar{X}_B,\bar{\sigma}_B)$ and 
$(\bar{X}_A,\bar{\sigma}_A)$ are flow equivalent
if and only if 
there exist an irreducible non-permutation matrix $C$ with entries in $\{0,1\}$
and continuous functions
$f_A, f_B:\bar{X}_C \longrightarrow\N$
with its values in positive integers such that 
there exist isomorphisms
$\Phi_A: \WRA\otimes\K \longrightarrow \WRC\otimes\K$
and
$\Phi_B: \WRB\otimes\K \longrightarrow \WRC\otimes\K$
of $C^*$-algebras
such that 
\begin{equation*}
\Phi_A\circ (\gamma^A_{(r,s)}\otimes\id)
=( \gamma^{C,f_A}_{(r,s)}\otimes\id)\circ\Phi_A, \qquad
\Phi_B\circ (\gamma^B_{(r,s)}\otimes\id)
=( \gamma^{C,f_B}_{(r,s)}\otimes\id)\circ\Phi_B
\end{equation*}
for $(r,s) \in \T^2.$
\end{theorem}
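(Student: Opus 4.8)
The plan is to prove the theorem by reducing flow equivalence to a chain of elementary moves, and then translating each move into the $C^*$-algebraic picture. Recall that by Parry--Sullivan \cite{PS}, two topological Markov shifts are flow equivalent if and only if one is obtained from the other by a finite sequence of \emph{symbol expansions} (and their inverses), equivalently by discrete suspension: passing from $(\bar{X}_A,\bar{\sigma}_A)$ with a ceiling function $g:\bar{X}_A\to\N$ to the ``discrete suspension'' two-sided Markov shift $(\bar{X}_{A_g},\bar{\sigma}_{A_g})$ built over $A$ with ceiling $g$. The key technical lemma, which I would establish first, is that for the discrete-suspension matrix $A_g$ there is an isomorphism $\WRAf\otimes\K \cong \WRA\otimes\K$ intertwining the gauge-type action $\gamma^{A_g}$ of $\T^2$ on the left with the weighted action $\gamma^{A,g}$ of $\T^2$ on the right (tensored with $\id$ on $\K$). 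This is the ``local'' version of the theorem where $C$ is already $A$ (or $A_g$), and it is the content of the earlier sections of the paper: the extended Ruelle algebra $\WRA \cong E_A(\OTA\otimes\OA)E_A$, the gauge action on $\OA$ behaves under symbol expansion by rescaling in one coordinate, and a continuous $\N$-valued function on the shift space is exactly a ceiling function for a discrete suspension. I expect this intertwining lemma to be the main obstacle, since it requires carefully tracking how the projection $E_A$, the $\T^2$-action $\gamma^A = \alpha^{A^t}\otimes\alpha^A$, and the weighting function transform under expansion; the weighting by $f$ enters precisely because one original symbol is replaced by a path of length $f$, so one traversal of the dual circle in the new algebra corresponds to $f$-many in the old, up to the coboundary given by the Bowen--Franks-type bookkeeping.

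Granting that lemma, the forward direction goes as follows. Suppose $(\bar X_B,\bar\sigma_B)$ and $(\bar X_A,\bar\sigma_A)$ are flow equivalent. By Parry--Sullivan there is a finite chain of symbol expansions/contractions connecting $A$ and $B$; a standard argument (cf. \cite{PS}, \cite{BH}) lets me arrange this as: there exist ceiling functions so that $B$ and $A$ have a common discrete suspension up to conjugacy — more precisely, there is an irreducible non-permutation matrix $C$ and continuous $\N_{>0}$-valued $f_A, f_B$ on $\bar X_C$ with $(\bar X_{A_{f_A}},\bar\sigma_{A_{f_A}})$ conjugate to $(\bar X_B,\bar\sigma_B)$ and $(\bar X_{B_{f_B}},\bar\sigma_{B_{f_B}})$ conjugate to $(\bar X_A,\bar\sigma_A)$ — wait, more cleanly: $C$ is chosen so that both $A$ and $B$ are symbol-contractions of $C$, i.e. $C = A_{f_A}$ up to conjugacy and $C = B_{f_B}$ up to conjugacy. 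Then the intertwining lemma applied twice yields isomorphisms $\WRA\otimes\K \cong \WRC\otimes\K$ and $\WRB\otimes\K \cong \WRC\otimes\K$ carrying $\gamma^A\otimes\id$ to $\gamma^{C,f_A}\otimes\id$ and $\gamma^B\otimes\id$ to $\gamma^{C,f_B}\otimes\id$ respectively — using also that conjugate shifts have canonically isomorphic extended Ruelle algebras intertwining the ($\T^2$)-actions, which follows from the earlier classification result \cite[Theorem 1.1]{MaPre2017b} (the triplet $(\WRA, C(\bar X_A),\gamma^A)$ is a complete conjugacy invariant).

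For the converse, suppose such $C$, $f_A$, $f_B$, $\Phi_A$, $\Phi_B$ exist. Running the intertwining lemma backwards: the pair $(\WRC\otimes\K, \gamma^{C,f_A}\otimes\id)$ is isomorphic, as a stabilized extended Ruelle algebra with its $\T^2$-action, to $(\WRAf\otimes\K,\gamma^{A_{f_A}}\otimes\id)$; combining with $\Phi_A$ we get an isomorphism $\WRA\otimes\K \to \WRAf\otimes\K$ intertwining the untwisted $\T^2$-actions, hence by \cite[Theorem 1.1]{MaPre2017b} (in its stabilized form, which is proved earlier in this paper) the shifts $(\bar X_A,\bar\sigma_A)$ and $(\bar X_{A_{f_A}},\bar\sigma_{A_{f_A}})$ would have to be conjugate — but that is too strong; rather, what the intertwining lemma actually gives in the stabilized setting is that $\WRA\otimes\K \cong \WRC\otimes\K$ with $\gamma^A\mapsto\gamma^{C,f_A}$ forces $(\bar X_A,\bar\sigma_A)$ to be flow equivalent to $(\bar X_C,\bar\sigma_C)$, precisely because the weighting $f_A$ records the suspension data. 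Likewise $(\bar X_B,\bar\sigma_B)$ is flow equivalent to $(\bar X_C,\bar\sigma_C)$. Since flow equivalence is an equivalence relation, $(\bar X_B,\bar\sigma_B)$ and $(\bar X_A,\bar\sigma_A)$ are flow equivalent, which completes the proof. The two directions thus both rest on the single bridge lemma identifying ``weighting the $\T^2$-action by a ceiling function'' with ``passing to the extended Ruelle algebra of the discrete suspension'', modulo stabilization; establishing that lemma, together with the stabilized form of the conjugacy classification, is where essentially all the work lies.
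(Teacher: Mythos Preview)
Your outline matches the paper's architecture: both directions pivot on the ``bridge lemma'' (the paper's Theorem~\ref{thm:6.5}) identifying $(\WRAf\otimes\K,\gamma^{A_f}\otimes\id)$ with $(\WRA\otimes\K,\gamma^{A,f}\otimes\id)$, combined with Parry--Sullivan for the forward direction. One small correction: in the Parry--Sullivan normal form the matrix $C$ is a common \emph{base}, not a common suspension --- $f_A,f_B$ live on $\bar X_C$ and one has $(\bar X_A,\bar\sigma_A)\cong(\bar X_{C_{f_A}},\bar\sigma_{C_{f_A}})$, so your ``$C=A_{f_A}$ up to conjugacy'' has the arrow reversed (and ``$A_{f_A}$'' is ill-typed since $f_A$ is not defined on $\bar X_A$).

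The substantive gap is in the converse. After applying the bridge lemma you correctly reduce to: a stabilized isomorphism $\WRA\otimes\K\to\widetilde{\mathcal R}_{C_{f_A}}\otimes\K$ intertwining the \emph{unweighted} actions $\gamma^A$ and $\gamma^{C_{f_A}}$ should force flow equivalence of $(\bar X_A,\bar\sigma_A)$ and $(\bar X_{C_{f_A}},\bar\sigma_{C_{f_A}})$. You call this a ``stabilized form of the conjugacy classification'' and leave it at that, but no such direct stabilized version of \cite[Theorem~1.1]{MaPre2017b} is available --- dropping the diagonal $C(\bar X_A)$ and stabilizing genuinely loses information, and one cannot simply read off even flow equivalence from the pair $(\WRA\otimes\K,\gamma^A\otimes\id)$ by classification-type arguments. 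The paper instead proves this implication (Proposition~\ref{prop:4.5}) by an indirect route: the equivariant isomorphism forces $K_0(\WRA)\cong K_0(\WRB)$, hence $\BF(A)\cong\BF(B)$ via K\"unneth; separately, passing to the crossed product by $\T^2$ and analyzing the induced $\Z^2$-action on $K_0$ shows $A^t\otimes A$ and $B^t\otimes B$ are shift equivalent in a refined sense, from which a spectral argument (Lemmas~\ref{lem:4.4} and~\ref{lem:4.5}) extracts $\Spm(A)=\Spm(B)$ and hence $\det(1-A)=\det(1-B)$; finally Franks's theorem gives flow equivalence. This chain --- especially the passage from $\Spm(A^t\otimes A)=\Spm(B^t\otimes B)$ together with ${\operatorname{Sp}}^\times(A)={\operatorname{Sp}}^\times(B)$ back to $\Spm(A)=\Spm(B)$ --- is where the real work in the converse lies, and your proposal does not anticipate it.
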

The above statement exactly corresponds to the situation that  the topological Markov shift 
$(\bar{X}_A,\bar{\sigma}_A)$ is realized as a discrete suspension of 
$(\bar{X}_C,\bar{\sigma}_C)$ by ceiling function $f_A$,
and 
$(\bar{X}_B,\bar{\sigma}_B)$ is realized as a discrete suspension of 
$(\bar{X}_C,\bar{\sigma}_C)$ by ceiling function $f_B$.

As a corollary we have the following.
\begin{corollary}[{Corollary \ref{cor:4.8}}] \label{cor:1.2}
Let $A, B$ be irreducible, non-permutation matrices with entries in $\{0,1\}.$
The two-sided topological Markov shifts 
$(\bar{X}_B,\bar{\sigma}_B)$ and 
$(\bar{X}_A,\bar{\sigma}_A)$ are flow equivalent
if and only if 
there exist 
continuous functions
$f_A:\bar{X}_A \longrightarrow \N$
and
$f_B:\bar{X}_B \longrightarrow \N$
with its values in positive integers, and 
 an isomorphism
$\Phi: \WRA\otimes\K \longrightarrow \WRB\otimes\K$
of $C^*$-algebras such that 
$$
\Phi\circ(\gamma^{A,f_A}_{(r,s)}\otimes\id)
=(\gamma^{B,f_B}_{(r,s)}\otimes\id)\circ\Phi,
\qquad (r,s) \in \T^2.
$$
\end{corollary}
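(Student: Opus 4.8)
The plan is to deduce Corollary~\ref{cor:1.2} from the main Theorem~\ref{thm:thm1.1} by the usual ``going through the common extension'' argument combined with the fact that discrete suspensions do not change the flow equivalence class. For the forward implication, assume $(\bar{X}_B,\bar{\sigma}_B)$ and $(\bar{X}_A,\bar{\sigma}_A)$ are flow equivalent. By Theorem~\ref{thm:thm1.1} we obtain an irreducible non-permutation matrix $C$, continuous functions $f_A,f_B:\bar{X}_C\to\N$ with values in the positive integers, and isomorphisms $\Phi_A:\WRA\otimes\K\to\WRC\otimes\K$, $\Phi_B:\WRB\otimes\K\to\WRC\otimes\K$ intertwining $\gamma^A\otimes\id$ with $\gamma^{C,f_A}\otimes\id$ and $\gamma^B\otimes\id$ with $\gamma^{C,f_B}\otimes\id$ respectively. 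The point is that the trivial (constant) ceiling function $f\equiv 1$ recovers the original action: $\gamma^{A,1}=\gamma^A$ and $\gamma^{B,1}=\gamma^B$, since weighting by the function identically equal to $1$ on the shift space corresponds to the trivial suspension. Hence $\Phi_A$ already has the form $\Phi_A\circ(\gamma^{A,f}_{(r,s)}\otimes\id)=(\gamma^{C,f_A}_{(r,s)}\otimes\id)\circ\Phi_A$ with $f = 1$; I would then set $\Phi := \Phi_B^{-1}\circ\Phi_A : \WRA\otimes\K\to\WRB\otimes\K$, which is an isomorphism, and relabel $C$ as $B$ only formally — more precisely, the desired isomorphism in the statement of Corollary~\ref{cor:1.2} is between $\WRA\otimes\K$ and $\WRB\otimes\K$, so one needs to pass $\Phi_A$ through $\Phi_B^{-1}$ and check how the weights transform.

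Concretely, composing, $\Phi\circ(\gamma^A_{(r,s)}\otimes\id) = \Phi_B^{-1}\circ\Phi_A\circ(\gamma^A_{(r,s)}\otimes\id) = \Phi_B^{-1}\circ(\gamma^{C,f_A}_{(r,s)}\otimes\id)\circ\Phi_A$. The obstruction is that $\Phi_B^{-1}$ intertwines $\gamma^{C,f_B}\otimes\id$ with $\gamma^B\otimes\id$, not $\gamma^{C,f_A}\otimes\id$ with anything simple; so a direct composition does not immediately land us in the two-function form demanded by the corollary. The natural fix is to first observe that Theorem~\ref{thm:thm1.1} is already symmetric enough to be restated with $C=B$ and one of the functions trivial: indeed, if $(\bar{X}_A,\bar{\sigma}_A)$ is realized as a discrete suspension of $(\bar{X}_C,\bar{\sigma}_C)$ by $f_A$ and $(\bar{X}_B,\bar{\sigma}_B)$ by $f_B$, then the two suspensions are mutually flow equivalent through $C$; reading the $B$-side of the theorem with the \emph{trivial} ceiling on $B$ is not quite legitimate because the theorem fixes a single common $C$. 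So the cleanest route is: keep the common $C$, and take $\Phi := \Phi_B^{-1}\circ\Phi_A$, then verify the intertwining against $\gamma^{A,f_A}$ and $\gamma^{B,f_B}$ \emph{where $f_A,f_B$ are now viewed as functions on $\bar{X}_A$, $\bar{X}_B$ respectively via the flow equivalences} — this is exactly where one uses that the weighted action $\gamma^{C,f}$ on $\WRC$ is conjugate, via the canonical isomorphism $\WRC\otimes\K\cong\WR{}_{C_f}\otimes\K$ realizing the suspension, to the unweighted action $\gamma^{C_f}$ on the suspended shift, which in turn is flow equivalent data for $A$.

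In detail, the key lemma I would isolate (or quote, if it appears earlier as part of the proof of Theorem~\ref{thm:thm1.1}) is: for an irreducible non-permutation matrix $A$ and a positive continuous function $f:\bar{X}_A\to\N$, the discrete suspension is again a two-sided topological Markov shift $(\bar{X}_{A_f},\bar{\sigma}_{A_f})$, and there is an isomorphism $\WRAf\otimes\K\cong\WRA\otimes\K$ carrying $\gamma^{A_f}\otimes\id$ to $\gamma^{A,f}\otimes\id$. Granting this, the forward direction of Corollary~\ref{cor:1.2} follows: with $C$, $f_A$, $f_B$, $\Phi_A$, $\Phi_B$ from Theorem~\ref{thm:thm1.1}, the suspended shift $C_{f_A}$ is topologically conjugate to $A$ and $C_{f_B}$ to $B$ (this is the parenthetical remark after Theorem~\ref{thm:thm1.1}), so $\WRA\otimes\K\cong\WR{}_{C_{f_A}}\otimes\K\cong\WRC\otimes\K$ with $\gamma^{A,f_A}\otimes\id$ corresponding — through the chain of identifications — to $\gamma^{C,f_A}\otimes\id$, and similarly for $B$. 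Then $\Phi := \Phi_B^{-1}\circ\Phi_A$ composed with these identifications is an isomorphism $\WRA\otimes\K\to\WRB\otimes\K$ with $\Phi\circ(\gamma^{A,f_A}_{(r,s)}\otimes\id)=(\gamma^{B,f_B}_{(r,s)}\otimes\id)\circ\Phi$.

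For the converse, suppose such $f_A$, $f_B$, and $\Phi$ exist. Using the same suspension isomorphism $\WR{}_{A_{f_A}}\otimes\K\cong\WRA\otimes\K$ intertwining $\gamma^{A_{f_A}}$ with $\gamma^{A,f_A}$, and likewise for $B$, we transport $\Phi$ to an isomorphism $\WR{}_{A_{f_A}}\otimes\K\to\WR{}_{B_{f_B}}\otimes\K$ intertwining the \emph{unweighted} $\T^2$-actions $\gamma^{A_{f_A}}\otimes\id$ and $\gamma^{B_{f_B}}\otimes\id$. By the stabilized version of \cite[Theorem 1.1]{MaPre2017b} — equivalently, by Theorem~\ref{thm:thm1.1} with all ceiling functions trivial — this forces $(\bar{X}_{A_{f_A}},\bar{\sigma}_{A_{f_A}})$ and $(\bar{X}_{B_{f_B}},\bar{\sigma}_{B_{f_B}})$ to be flow equivalent. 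Since a discrete suspension is always flow equivalent to the base (Parry--Sullivan, \cite{PS}), $(\bar{X}_A,\bar{\sigma}_A)$ is flow equivalent to $(\bar{X}_{A_{f_A}},\bar{\sigma}_{A_{f_A}})$ and $(\bar{X}_B,\bar{\sigma}_B)$ to $(\bar{X}_{B_{f_B}},\bar{\sigma}_{B_{f_B}})$, so by transitivity of flow equivalence $(\bar{X}_A,\bar{\sigma}_A)$ and $(\bar{X}_B,\bar{\sigma}_B)$ are flow equivalent. The main obstacle, and the step deserving the most care, is verifying precisely the suspension isomorphism $\WRAf\otimes\K\cong\WRA\otimes\K$ with the stated intertwining of weighted and unweighted $\T^2$-actions: this is really the technical heart of the paper and presumably constitutes much of the proof of Theorem~\ref{thm:6.7}, so in writing Corollary~\ref{cor:1.2} I would simply cite it and restrict myself to the bookkeeping of composing the isomorphisms and checking the equivariance identities hold for all $(r,s)\in\T^2$.
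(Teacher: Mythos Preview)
Your converse direction is essentially the paper's argument: transport $\Phi$ via Theorem~\ref{thm:6.5} to an isomorphism $\widetilde{\mathcal{R}}_{A_{f_A}}\otimes\K\to\widetilde{\mathcal{R}}_{B_{f_B}}\otimes\K$ intertwining the \emph{unweighted} actions, deduce flow equivalence of the suspensions (this is Proposition~\ref{prop:4.5}, not \cite[Theorem~1.1]{MaPre2017b}, which gives topological conjugacy), and conclude by transitivity through the bases.

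Your forward direction, however, has a genuine gap. You start from Theorem~\ref{thm:thm1.1}, which produces $f_A,f_B:\bar{X}_C\to\N$ on the \emph{common base} $\bar{X}_C$, and you then write ``$\gamma^{A,f_A}$'' and speak of ``$f_A,f_B$ viewed as functions on $\bar{X}_A,\bar{X}_B$ via the flow equivalences''. But $\gamma^{A,f}$ requires $f\in C(\bar{X}_A,\Z)$; a flow equivalence does not transport a function on $\bar{X}_C$ to one on $\bar{X}_A$ in any way compatible with the weighted action. The chain $\WRA\otimes\K\cong\widetilde{\mathcal{R}}_{C_{f_A}}\otimes\K\cong\WRC\otimes\K$ that you assemble intertwines $\gamma^A$ with $\gamma^{C,f_A}$---exactly the content of $\Phi_A$ already---and never produces a nontrivial weight on the $A$-side. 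Composing $\Phi_B^{-1}\circ\Phi_A$ still leaves you needing $\Phi_A^{-1}\circ(\gamma^{C,f_B}\otimes\id)\circ\Phi_A$ to be of the form $\gamma^{A,g}\otimes\id$ for some $g$ on $\bar{X}_A$, which you have no reason to expect.

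The paper avoids this by invoking a different form of Parry--Sullivan: $(\bar{X}_A,\bar{\sigma}_A)$ and $(\bar{X}_B,\bar{\sigma}_B)$ are flow equivalent if and only if there exist $f_A:\bar{X}_A\to\N$ and $f_B:\bar{X}_B\to\N$ with $(\bar{X}_{A_{f_A}},\bar{\sigma}_{A_{f_A}})$ and $(\bar{X}_{B_{f_B}},\bar{\sigma}_{B_{f_B}})$ topologically conjugate (a common extension \emph{above}, rather than a common base below). This immediately places the ceiling functions on the correct spaces; one then applies \cite[Theorem~1.1]{MaPre2017b} to the conjugacy and composes with the two instances of Theorem~\ref{thm:6.5}. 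Your ``key lemma'' is correctly identified as Theorem~\ref{thm:6.5}; the missing ingredient is using the upward Parry--Sullivan characterization instead of routing through Theorem~\ref{thm:thm1.1}.
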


The organization of the paper is the following.

In Section 2, we will briefly recall basic notation and terminology on 
groupoid $C^*$-algebras, Cuntz-Krieger  algebras,
Ruelle algebras and flow equivalence of topological Markov shifts.

In Section 3, a bilateral version of the Krieger's dimension group for topological Markov shifts 
will be studied and called the dimension quadruplet that will be shown to be invariant for shift equivalence of the underlying matrices.

In Section 4, the dimension quadruplet is described by the K-group
of the AF-algebra $C^*(G_A^a)$ of the groupoid $G_A^a.$
As a result, an sufficient condition under  which the two-sided Markov shifts 
$(\bar{X}_B,\bar{\sigma}_B)$ and 
$(\bar{X}_A,\bar{\sigma}_A)$ are flow equivalent
is given in terms of the stabilized action $\gamma^A\otimes \id$ of $\T^2$ on $\WRA\otimes\K$ (Proposition \ref{prop:4.5}).
 
In Section 5, the action $\gamma^{A,f}$ with potential function $f$ on
the algebra $\WRA$ is introduced. 

In Section 6,
we characterize  flow equivalence of  two-sided topological Markov shifts 
in terms of the actions with potential functions  of two-dimensional torus on the extended Ruelle algebras
 $\WRA$.

In Section 7,
we study a detailed statement of
Theorem \ref{thm:thm1.1} (Proposition \ref{prop:7.1}).

Throughout the paper, we denote by $\Zp, \N$
the set of nonnegative integers, the set of positive integers, respectively.

\section{Preliminaries}
In this section, we briefly recall basic notation and terminology on 
the $C^*$-algebras of \'etale groupoids,   Cuntz-Krieger  algebras,
Ruelle algebras and flow equivalence of topological Markov shifts.
In what follows, a square matrix $A =[A(i,j)]_{i,j=1}^N$ is
assumed to be  an  $N\times N$
irreducible, non-permutation matrix with entries in $\{0,1\}.$  

\medskip

{\bf 1. $C^*$-algebras of \'etale groupoids}

Let us construct $C^*$-algebras from \'etale groupoids.
The general theory of the construction of groupoid $C^*$-algebras
was initiated and studied by Renault  \cite{Renault1} (see also \cite{Renault2}, \cite{Renault3}).
 The construction will be used in the following sections. 
Let $G$ be an \'etale groupoid with its unit space $G^\circ$ and range map, source map
$r, s: G \longrightarrow G^\circ$
and $C_c(G)$ denote the $*$-algebra of continuous functions on $G$ with compact support having its product and $*$-involution defined by  
\begin{equation*}
(f*g)(\gamma) = \sum_{\eta; r(\gamma) = r(\eta)} f(\eta)g(\eta^{-1}\gamma), \qquad
f^*(\gamma) =\overline{f(\gamma^{-1})},\qquad f,g \in C_c(G), \,\, \gamma \in G.
\end{equation*}
We denote by $C_0(G^\circ)$ the commutative $C^*$-algebra of continuous functions on $G^\circ$
vanishing at infinity.
The algebra  
$C_c(G)$ has a structure of right $C_0(G^\circ)$-module with 
$C_0(G^\circ)$-valued right inner product given by
\begin{equation*}
(\xi g)(\gamma) = \xi(\gamma) g(r(\gamma)), \qquad
< \xi, \zeta>(t) = \sum_{\eta; t = r(\eta)} \overline{\xi(\eta)}\zeta(\eta)
\end{equation*}
for $\xi, \zeta \in  C_c(G), \, \, g \in C_c(G^\circ), \, \gamma\in G, \, t \in G^\circ.$
The completion of 
$C_c(G)$ by the norm defined by the above inner product is denoted by 
$\ell^2(G),$ which is a Hilbert $C^*$-right module over $C_0(G^\circ).$
The algebra $C_c(G)$ is represented on $\ell^2(G)$ as bounded adjointable 
$C_0(G^\circ)$-right module maps by
$\pi(f)\xi = f*\xi$ for $f \in C_c(G), \xi \in \ell^2(G).$
The closure of $\pi(C_c(G))$ by the operator norm on $\ell^2(G)$
is denoted by $C^*_r(G)$ and called the (reduced) groupoid $C^*$-algebra for the \'etale groupoid $G.$ 
The completeion of $C_c(G)$ by the universal $C^*$-norm is called the (full) groupoid $C^*$-algebra for  $G.$ 
Now we treat the three kinds of groupoids
$G_A^a, G_A^a\rtimes\Z, G_A^{s,u}\rtimes\Z^2$.
They are all \'etale and amenable, so that the two groupoid $C^*$-algebras
 $C^*_r(G)$ and  $C^*(G)$ are canonically isomorphic for such groupoids.
 We do not distinguish them, and write them as 
$C^*(G)$ for $G=G_A^a, G_A^a\rtimes\Z, G_A^{s,u}\rtimes\Z^2$. 
 
\medskip

{\bf 2. Cuntz-Krieger  algebras,
Ruelle algebras and extended Ruelle algebras}

The Cuntz--Krieger algebra $\OA$ introduced by Cuntz--Krieger \cite{CK}
is a universal unique $C^*$-algebra 
generated by partial isometries $S_1,\dots,S_N$ subject to the relations:
\begin{equation}
\sum_{j=1}^N S_j S_j^* =1, \qquad S_i^* S_i =\sum_{j=1}^N A(i,j)S_j S_j^*,
\qquad i=1,\dots,N.
\label{eq:CK}
\end{equation}
By the universality for the  relations \eqref{eq:CK} of operators,
the correspondence 
$S_i \longrightarrow \exp(2\pi\sqrt{-1}t)S_i,$
$ i=1,\dots,N$ for each 
$t \in \mathbb{R}/\Z =\T$ yields an automorphism written $\alpha^A_t$ on the $C^*$-algebra $\OA.$ 
The automorphisms $\alpha^A_t, t \in \T$ define an action of $\T$ on $\OA$ called the gauge action.
It is well-known that the fixed point algebra 
$(\OA)^{\alpha^A}$ of $\OA$ under the gauge action
is an AF-algebra written $\FA$.
Let us denote by $B_m(\bar{X}_A)$ the set of admissible words in $\bar{X}_A$ of length 
$m$ and by 
$B_*(\bar{X}_A)$ the set of all admissible words of $\bar{X}_A$.
For $\mu = (\mu_1,\dots,\mu_m) \in B_m(\bar{X}_A)$, 
we write
$S_\mu = S_{\mu_1}\cdots S_{\mu_m}$.
We denote by $\DA$ the $C^*$-subalgebra of $\FA$ generated by projections
$S_\mu S_\mu^*, \mu \in B_*(\bar{X}_A).$

As in \cite{CK} and \cite{CuntzInvent80} (cf. \cite{MaMathScand1998},
\cite{Rosenberg}),
the crossed product $\OA\rtimes_{\alpha^A}\T$ is stably isomorphic to
the AF-algebra $\FA$. Hence the dual action $\hat{\alpha}^A$ on
$\OA\rtimes_{\alpha^A}\T$ induces an automorphism
on $K_0(\FA)$, that is written $\delta_A$.
The triplet $(K_0(\FA),K_0^+(\FA),\delta_A)$
appears as  the (future) dimension triplet written
$(\Delta_A,\Delta_A^+,\delta_A)$ for $A$
defined by W. Krieger \cite{Krieger1980}. 
For the transposed  matrix $A^t$ of $A$, 
we similarly consider the  Cuntz--Krieger algebra
$\OTA$ and its AF-subalgebra $\FTA$.
Let us denote by 
 $T_1,\dots,T_N$ the generating partial isometries of $\OTA$ which satisfy 
the relations:
\begin{equation}
\sum_{i=1}^N T_i T_i^* =1, \qquad T_j^* T_j =\sum_{i=1}^N A(i,j)T_i T_i^*,\quad j=1,\dots,N.
\label{eq:CKt}
\end{equation}
For $\xi =(\xi_1,\dots,\xi_k) \in B_k(\bar{X}_A),$
we denote by $\bar{\xi}$ the transposed word
$(\xi_k,\dots,\xi_1) $ which belongs to $B_k(\bar{X}_{A^t}),$
and write
$T_{\bar{\xi}} = T_{\xi_k}\cdots T_{\xi_1}$.

Define the projection 
$E_A \in \FTA\otimes\FA$ by setting
\begin{equation*}
E_A = \sum_{j=1}^N T_j^* T_j \otimes S_j S_j^*
\end{equation*}
which coincides with
$ \sum_{i=1}^N T_i T_i^* \otimes S_i^* S_i$
because of the equalities \eqref{eq:CK} and \eqref{eq:CKt}.
Let $G_A^a, G_A^a\rtimes\Z, G_A^{s,u}\rtimes\Z^2 $ 
denote the \'etale amenable groupoids 
stated in Section 1.
As in \cite[Proposition 2.1]{MaPre2017b},
we know the following lemma.
\begin{lemma}\hspace{7cm}
\begin{enumerate}
\renewcommand{\theenumi}{\roman{enumi}}
\renewcommand{\labelenumi}{\textup{(\theenumi)}}
\item
The groupoid $C^*$-algebra 
$C^*(G_A^a)$ is canonically isomorphic to the $C^*$-subalgebra of
 $\FTA\otimes\FA$
generated by elements $T_{\bar{\xi}}T_{\bar{\eta}}^* \otimes S_\mu S_\nu^*$ 
where
$\mu =(\mu_1,\dots,\mu_m), \,\nu =(\nu_1,\dots,\nu_n) \in B_*(\bar{X}_A), \, $
$\bar{\xi} =(\xi_k,\dots,\xi_1), \, \bar{\eta} =(\eta_l,\dots,\eta_1) 
\in B_*(\bar{X}_{A^t})$
satisfying 
$A(\xi_k, \mu_1) = A(\eta_l,\nu_1) = 1$
and
$k=l,\, m=n.$
Hence 
$C^*(G_A^a)$ is canonically isomorphic to the $C^*$-algebra 
 $E_A(\FTA\otimes\FA)E_A.$
\item
The Ruelle algebra $\RA=C^*(G_A^a\rtimes\Z)$
 is canonically isomorphic to the $C^*$-subalgebra of
 $\OTA\otimes\OA$
generated by elements $T_{\bar{\xi}}T_{\bar{\eta}}^* \otimes S_\mu S_\nu^*$ 
where
$\mu =(\mu_1,\dots,\mu_m), \,\nu =(\nu_1,\dots,\nu_n) \in B_*(\bar{X}_A), $
$\bar{\xi} =(\xi_k,\dots,\xi_1), \, \bar{\eta} =(\eta_l,\dots,\eta_1) \in B_*(\bar{X}_{A^t})$
satisfying
$A(\xi_k, \mu_1) = A(\eta_l,\nu_1) = 1$
and
$m+k=n+l.
$
\item
The extended Ruelle algebra $\WRA=C^*(G_A^{s,u}\rtimes\Z^2)$
 is canonically isomorphic to the $C^*$-subalgebra of
 $\OTA\otimes\OA$
generated by elements $T_{\bar{\xi}}T_{\bar{\eta}}^* \otimes S_\mu S_\nu^*$ 
where
$\mu =(\mu_1,\dots,\mu_m), \,\nu =(\nu_1,\dots,\nu_n) \in B_*(\bar{X}_A),$
$\bar{\xi} =(\xi_k,\dots,\xi_1), \, \bar{\eta} =(\eta_l,\dots,\eta_1) \in B_*(\bar{X}_{A^t})$
satisfying
$A(\xi_k, \mu_1) = A(\eta_l,\nu_1) = 1.
$
Hence 
$\WRA$ is canonically isomorphic to the $C^*$-algebra 
 $E_A(\OTA\otimes\OA)E_A.$
\end{enumerate}
\end{lemma}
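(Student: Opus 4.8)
The plan is to realize the three groupoids via the one-sided Deaconu--Renault models of $\OA$ and $\OTA$, and to exhibit each of the three algebras as the corner of $\OTA\otimes\OA$ cut down by the projection $E_A$. Write $(X_A,\sigma_A)$ for the one-sided topological Markov shift and $\mathcal{G}_A=\{(x,k-l,y)\in X_A\times\Z\times X_A\mid k,l\in\Zp,\ \sigma_A^k(x)=\sigma_A^l(y)\}$ for its Deaconu--Renault groupoid. There is a canonical isomorphism $\OA\cong C^*(\mathcal{G}_A)$ carrying $S_\mu S_\nu^*$ to the indicator function of the compact open bisection $Z(\mu,\nu)=\{(\mu z,|\mu|-|\nu|,\nu z):z\in X_A\}$, carrying the gauge action $\alpha^A$ to the $\T$-action dual to the canonical $\Z$-valued cocycle $c_A$ on $\mathcal{G}_A$, carrying $\FA$ to $C^*(\ker c_A)$ (generated by the $S_\mu S_\nu^*$ with $|\mu|=|\nu|$), and $\DA$ to $C((\mathcal{G}_A)^\circ)=C(X_A)$. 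The same applies with $A$ replaced by $A^t$, using the partial isometries $T_{\bar{\xi}}$, the groupoid $\mathcal{G}_{A^t}$ on $X_{A^t}$, and $c_{A^t}$, $\FTA$. Hence $\OTA\otimes\OA\cong C^*(\mathcal{G}_{A^t}\times\mathcal{G}_A)$ on the unit space $X_{A^t}\times X_A$, with $\alpha^{A^t}\otimes\alpha^A$ dual to the cocycle $(g,h)\mapsto(c_{A^t}(g),c_A(h))$.

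Next I would compute $E_A=\sum_j T_j^*T_j\otimes S_jS_j^*$ explicitly and check that it is the indicator function of the clopen set $Y=\{(\omega,\zeta)\in X_{A^t}\times X_A\mid A(\omega_0,\zeta_0)=1\}$ in the unit space; then $E_A(\OTA\otimes\OA)E_A\cong C^*\big((\mathcal{G}_{A^t}\times\mathcal{G}_A)|_Y\big)$, the reduction of the product groupoid to $Y$, and this corner is generated by exactly the products $T_{\bar{\xi}}T_{\bar{\eta}}^*\otimes S_\mu S_\nu^*$ whose left and right supports lie over $Y$, i.e.\ by those satisfying the ``seam'' conditions $A(\xi_k,\mu_1)=A(\eta_l,\nu_1)=1$. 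It then remains to exhibit an isomorphism of \'etale groupoids $G_A^{s,u}\rtimes\Z^2\cong(\mathcal{G}_{A^t}\times\mathcal{G}_A)|_Y$. I would start from the past--future decoupling homeomorphism $\Theta\colon\bar{X}_A\to Y$, $\Theta\big((x_n)_{n\in\Z}\big)=\big((x_{-1},x_{-2},\dots),(x_0,x_1,\dots)\big)$, which identifies the two unit spaces, and verify that, with a suitable normalization of the two degree cocycles, the assignment $(x,p,q,y)\mapsto\big((\omega_x,-q,\omega_y),(\zeta_x,p,\zeta_y)\big)$ (where $\Theta(x)=(\omega_x,\zeta_x)$, $\Theta(y)=(\omega_y,\zeta_y)$, the exponent $p$ governing the stable matching being absorbed into the $\mathcal{G}_A$-degree and $q$ governing the unstable matching into the $\mathcal{G}_{A^t}$-degree) is a bijective groupoid homomorphism and a homeomorphism; the point is that on both sides the topology is generated by cylinder-type compact open bisections, which correspond term by term. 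Combined with the description of the corner, this yields (iii): $\WRA=C^*(G_A^{s,u}\rtimes\Z^2)\cong E_A(\OTA\otimes\OA)E_A$ with the stated generators and no length restriction.

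The assertions (i) and (ii) then follow by restricting the isomorphism just constructed to the invariant subgroupoids cut out by the gradings. Under it the element $(x,p,q,y)$ has $\mathcal{G}_{A^t}$-degree $-q$ and $\mathcal{G}_A$-degree $p$, so the asymptotic groupoid $G_A^a=(G_A^{s,u}\rtimes\Z^2)\cap\{p=q=0\}$ corresponds to $(\ker c_{A^t}\times\ker c_A)|_Y$, whose $C^*$-algebra is $E_A(\FTA\otimes\FA)E_A$ (indeed $E_A\in\FTA\otimes\FA$), and on generators the condition $p=q=0$ reads $k=l$ and $m=n$. Likewise $G_A^a\rtimes\Z$ sits in $G_A^{s,u}\rtimes\Z^2$ as the diagonal subgroupoid $\{p=q\}$ — because $(\bar{\sigma}_A^{k}(x),\bar{\sigma}_A^{l}(y))\in G_A^a$ is equivalent to $(\bar{\sigma}_A^{k-l}(x),y)$ lying in both $G_A^s$ and $G_A^u$ — and this maps to the subgroupoid of $(\mathcal{G}_{A^t}\times\mathcal{G}_A)|_Y$ on which $c_{A^t}(g)+c_A(h)=0$; on generators this is $(k-l)+(m-n)=0$, i.e.\ $m+k=n+l$, and the corresponding subalgebra of $\OTA\otimes\OA$ is $\RA=C^*(G_A^a\rtimes\Z)$.

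Throughout, all the groupoids involved ($G_A^a$, $G_A^a\rtimes\Z$, $G_A^{s,u}\rtimes\Z^2$, $\mathcal{G}_{A^t}\times\mathcal{G}_A$ and its reductions) are amenable, so full and reduced $C^*$-algebras agree and there is no completion ambiguity. The step I expect to be most delicate is the verification that $\Theta$ upgrades to a groupoid isomorphism with the two degree cocycles normalized consistently for all three cases at once — pinning down the signs in the decoupling and identifying which copy of $\Z$ inside $\Z^2$ carries $\RA$ — while the remaining verifications amount to routine manipulations with cylinder sets.
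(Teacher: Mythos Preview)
Your approach is correct and is essentially the one used in the references the paper cites: the paper does not give its own proof of this lemma but simply invokes \cite[Proposition~2.1]{MaPre2017b} (and, implicitly, \cite[Section~9]{MaPre2017a}), where the isomorphism is set up exactly via the bisection-to-partial-isometry correspondence $\chi_{U_{\xi\mu,\eta\nu}}\longleftrightarrow T_{\bar{\xi}}T_{\bar{\eta}}^*\otimes S_\mu S_\nu^*$ that you reconstruct through the Deaconu--Renault picture and the past--future homeomorphism $\Theta$. Your identification of $E_A$ with the indicator of $Y=\{(\omega,\zeta):A(\omega_0,\zeta_0)=1\}$, of $G_A^a$ with the kernel $\{p=q=0\}$, and of $G_A^a\rtimes\Z$ with the diagonal $\{p=q\}$ (yielding the constraint $(k-l)+(m-n)=0$ on generators) all match the conventions used later in the paper (see the proof of Lemma~5.3, where $p=m-n$ and $q=l-k$).
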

  Under the identification between 
$E_A(\OTA\otimes\OA)E_A$  and $\WRA$,
the tensor product 
$\alpha^{A^t}_r\otimes\alpha^A_s$ of gauge actions on $\OTA$ and $\OA$
yields an action
of $\T^2$ on $\WRA$ written $\gamma^A_{(r,s)}, (r,s) \in \T^2,$
because
$\gamma^A_{(r,s)}(E_A) = E_A.$
We write
$$
\delta^A_t:= \alpha^{A^t}_t\otimes\alpha^A_t,\qquad
\rho^A_t := \gamma^A_{(-\frac{t}{2},\frac{t}{2})}, \qquad
t \in \T.
$$
\begin{lemma}\hspace{6cm}
\begin{enumerate}
\renewcommand{\theenumi}{\roman{enumi}}
\renewcommand{\labelenumi}{\textup{(\theenumi)}}
\item
The restriction of the action $\rho^A_t, t \in \T$ to the subalegera 
$\RA$ 
is regarded as the dual action on $\RA$ under a natural identification 
between $\RA$ and the crossed product $C^*(G_A^a)\rtimes\Z$.
Hence the fixed point algebra 
$(\RA)^{\rho^A}$ is isomorphic to $C^*(G_A^a).$
 \item
The fixed point algebra $(\WRA)^{\delta^A}$ of $\WRA$ under $\delta^A$ is 
isomorphic to $\RA$, so that 
 the fixed point algebra
$(\WRA)^{\gamma^A}$ of $\WRA$ under $\gamma^A$ is 
isomorphic to $C^*(G_A^a).$ 
\end{enumerate}
\end{lemma}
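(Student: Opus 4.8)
The plan is to identify each fixed point algebra directly through the groupoid picture rather than through the concrete generator-by-generator description inside $\OTA\otimes\OA$, since the gauge actions have a clean interpretation as dual actions of the $\Z$- and $\Z^2$-gradings carried by the respective groupoids. First I would record that $G_A^{s,u}\rtimes\Z^2$ carries a continuous cocycle $c\colon G_A^{s,u}\rtimes\Z^2 \to \Z^2$, $c(x,p,q,y) = (p,q)$, and that under the canonical isomorphism of Lemma~2.1(iii) this cocycle corresponds exactly to the bigrading implemented by $\alpha^{A^t}\otimes\alpha^A$; concretely, on a generator $T_{\bar\xi}T_{\bar\eta}^*\otimes S_\mu S_\nu^*$ with $|\bar\xi|=k,\,|\bar\eta|=l,\,|\mu|=m,\,|\nu|=n$, the action $\gamma^A_{(r,s)}$ multiplies by $e^{2\pi i((k-l)r+(m-n)s)}$, so the degree is $(k-l,\,m-n)$, which is precisely the $\Z^2$-component built into the groupoid element. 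Granting this, the standard fact (e.g.\ as in Renault, or as already used implicitly for the Ruelle algebra) that for a continuous cocycle $c\colon G\to \Z^d$ the induced $\T^d$-action on $C^*(G)$ has fixed point algebra $C^*(\ker c)$ does the work: $\ker c$ for the bigrading is exactly $G_A^{s,u}$ sitting inside $G_A^{s,u}\rtimes\Z^2$ as $\{(x,0,0,y)\}$, which is $G_A^a$ by definition. That gives $(\WRA)^{\gamma^A}\cong C^*(G_A^a)$.

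For part (ii), I would first handle the $\RA$ claim exactly as the excerpt already asserts: $\RA = C^*(G_A^a\rtimes\Z)$ is the crossed product $C^*(G_A^a)\rtimes_{\bar\sigma_A}\Z$, its dual $\T$-action is the one induced by the grading cocycle $(x,k-l,y)\mapsto k-l$, and comparing with the description in Lemma~2.1(ii) — where the surviving generators have $m+k=n+l$, i.e.\ total degree $(k-l)+(m-n)=0$ — this dual action is visibly $\delta^A_t = \alpha^{A^t}_t\otimes\alpha^A_t$ restricted to $\RA$. Hence $(\RA)^{\rho^A}$, where $\rho^A$ is the complementary one-parameter subgroup, equals $(C^*(G_A^a)\rtimes\Z)^{\text{dual}}=C^*(G_A^a)$. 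Then for $(\WRA)^{\delta^A}$: the diagonal subgroup $\{(t,t)\}\subset\T^2$ has as kernel of the corresponding cocycle $c_{\mathrm{diag}}(x,p,q,y)=p+q$ the subgroupoid $\{p+q=0\}$, which one checks is isomorphic to $G_A^a\rtimes\Z$ via $(x,p,-p,y)\mapsto(x,p,y)$ — using that $(\bar\sigma_A^p x,\bar\sigma_A^{-p}x)\in G_A^a$ iff $(\bar\sigma_A^p x, y)\in G_A^s$ and $(\bar\sigma_A^{-p}x,y)\in G_A^u$. So $(\WRA)^{\delta^A}\cong C^*(G_A^a\rtimes\Z)=\RA$, and composing with the previous computation (or intersecting the two fixed-point conditions, since $\gamma^A$ is generated by $\delta^A$ and $\rho^A$) yields $(\WRA)^{\gamma^A}\cong C^*(G_A^a)$, consistent with part (i).

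The main obstacle I expect is the bookkeeping needed to verify cleanly that the $\Z^2$-cocycle on $G_A^{s,u}\rtimes\Z^2$ matches the bigrading of $\alpha^{A^t}\otimes\alpha^A$ under the isomorphism of Lemma~2.1(iii) — i.e.\ tracking how a product $T_{\bar\xi}T_{\bar\eta}^*\otimes S_\mu S_\nu^*$ corresponds to a groupoid element $(x,p,q,y)$ and confirming $(p,q)=(k-l,m-n)$ on the nose, including the compatibility with the relation $A(\xi_k,\mu_1)=A(\eta_l,\nu_1)=1$ that cuts out the subalgebra. Once that dictionary is pinned down, everything else reduces to the general principle relating cocycles, kernels, crossed products and dual-action fixed point algebras, together with the already-established identifications $C^*(G_A^a)=E_A(\FTA\otimes\FA)E_A$ and $\WRA=E_A(\OTA\otimes\OA)E_A$ from Lemma~2.1. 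An alternative, should the groupoid dictionary prove cumbersome, is to argue entirely inside $E_A(\OTA\otimes\OA)E_A$: compute the $\gamma^A$-fixed elements as norm limits of fixed linear combinations of generators, observe these are exactly the degree-$(0,0)$ generators $T_{\bar\xi}T_{\bar\eta}^*\otimes S_\mu S_\nu^*$ with $k=l$, $m=n$, and match them against the generator list for $C^*(G_A^a)$ in Lemma~2.1(i); similarly the degree-$0$ (total) generators reproduce the list for $\RA$ in Lemma~2.1(ii), giving $(\WRA)^{\delta^A}\cong\RA$.
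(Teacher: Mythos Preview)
The paper states this lemma without proof, so there is no argument to compare against; your groupoid-cocycle approach is a natural way to supply one and is essentially correct. The alternative you mention at the end --- reading off the fixed-point subalgebras directly from the generator lists in Lemma~2.1(i)--(iii) --- is in fact the shortest route and is probably what the author had in mind as implicit justification.

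There is, however, one genuine slip in your argument for part~(i). You write that the dual action on $\RA$ ``is visibly $\delta^A_t=\alpha^{A^t}_t\otimes\alpha^A_t$ restricted to $\RA$''. This is wrong: $\delta^A$ restricted to $\RA$ is \emph{trivial}, precisely because $\RA$ consists of the elements of total $\delta^A$-degree zero --- that is the content of the condition $m+k=n+l$ you yourself quote from Lemma~2.1(ii). The dual action on $\RA$ is $\rho^A$, not $\delta^A$: on a generator $T_{\bar\xi}T_{\bar\eta}^*\otimes S_\mu S_\nu^*$ with $m+k=n+l$ (so $m-n=l-k$), one has
\[
\rho^A_t=\gamma^A_{(-t/2,\,t/2)}\ \text{acting by}\ e^{2\pi i\,(\,-(k-l)t/2+(m-n)t/2\,)}=e^{2\pi i\,t(m-n)},
\]
which is exactly the $\Z$-grading coming from $G_A^a\rtimes\Z$. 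Your conclusion $(\RA)^{\rho^A}\cong C^*(G_A^a)$ is correct, but the sentence identifying the dual action needs this correction; as written, the ``Hence'' does not follow.

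A smaller bookkeeping point: in the paper's conventions (see the set $U_{\xi\mu,\eta\nu}$ in Section~5) the groupoid element corresponding to $T_{\bar\xi}T_{\bar\eta}^*\otimes S_\mu S_\nu^*$ carries $(p,q)=(m-n,\,l-k)$, not $(k-l,\,m-n)$. Consequently the subgroupoid giving $\RA$ is $\{p=q\}$ rather than $\{p+q=0\}$. This is only a relabeling and does not affect the substance of your argument, but it is worth aligning with the paper's coordinates before writing out the isomorphism $(x,p,p,y)\mapsto(x,p,y)$ onto $G_A^a\rtimes\Z$.
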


\medskip

{\bf 3. Suspension and flow equivalence} 

We will briefly review discrete suspension of topological Markov shifts.
Let $f:\bar{X}_A\longrightarrow \N$ 
be a continuous function on the shift space $\bar{X}_A$
with its values in positive integers.
Let $f(\bar{X}_A) = \{1,2,\dots,L\}.$
Put
$X_j = \{x \in \bar{X}_A\mid f(x) =j\}, j=1,\dots,L.$
Define the suspension space
$\bar{X}_{A, f} = \cup_{j=1}^L X_j \times \{0,1,\dots, j-1\}$
with transformation 
$\bar{\sigma}_{A,f}$ on $\bar{X}_{A, f} $
by
\begin{equation*}
\bar{\sigma}_{A,f} ([x, k] )
= \begin{cases}
[x,k+1] & \text{ if } 0\le k \le j-2, \\
[\bar{\sigma}_A(x), 0] & \text{ if } k=j-1
\end{cases}
\end{equation*}
for $[x,k]\in X_j\times\{0,1,\dots,j-1\}.$
The resulting topological dynamical system
$(\bar{X}_{A, f}, \bar{\sigma}_{A,f}) $
is called the discrete suspension of $(\bar{X}_A, \bar{\sigma}_A)$
by ceiling function $f$, which is homeomorphic to a topological Markov shift.
If in particular 
the function $f: \bar{X}_A \longrightarrow \N$
depends only on the $0$th coordinate of $\bar{X}_A$,
then $f$ is written  
$f = \sum_{j=1}^N f_j \chi_{U_j(0)}$ for some integers $f_j \in \Z,$ 
where  $\chi_{U_j(0)}$ is the characteristic function of the cylinder set
$$  
U_j(0)  =\{ (x_n)_{n \in \Z} \in \bar{X}_A \mid x_0 = j \},
\qquad j=1,\dots,N.
$$
Put $m_j  =f_j -1$ for $j=1,\dots,N.$
Let
$\G =(\V,\E)$ be the directed graph defined by the matrix $A$
with the vertex set $\V = \{1,2,\dots,N\}.$
An edge of $\G$ is defined by a pair 
$(i,j)$ of vertices $i,j=1,\dots,N$ such that 
$A(i,j) =1,$ 
whose source is $i$ and the terminal is $j.$
The set of such pairs $(i,j)$ 
is the edge set $\E.$
Consider a new graph 
$\G_f =(\V_f,\E_f)$ with its transition matrix $A_f$  
from the graph $\G =(\V,\E)$ and the function $f$
such that 
$\V_f =\cup_{j=1}^N \{ j_0, j_1, j_2,\dots, j_{m_j} \}$
and
if $A(j,k) =1,$ then
\begin{equation}
A_f(j_0,j_1) =A_f(j_1,j_2) =\cdots =A_f(j_{mj-1},j_{m_j}) =A_f(j_{m_j}, k_0) =1.
\label{eq:2.3}
\end{equation}
For other pairs $(j_i, j'_{i'}) \in \V_f\times\V_f,$
we define $A_f(j_i, j'_{i'}) =0.$ 
Hence the size of the matrix $A_f$ is
$(f_1+f_2+ \cdots +f_N) \times (f_1+f_2+ \cdots +f_N). $ 
Then the 
discrete suspension 
$(\bar{X}_{A, f}, \bar{\sigma}_{A,f}) $
is nothing but 
the topological Markov shift
$( \bar{X}_{A_ f}, \bar{\sigma}_{A_f}) $
defined by the matrix 
$A_f$.

Two topological Markov shifts  
are said to be flow equivalent if they are realized as cross sections with 
their first return maps of a common one-dimensional flow space.
 Parry--Sullivan in \cite{PS} proved that 
$(\bar{X}_A,\bar{\sigma}_A)$
and
$(\bar{X}_B,\bar{\sigma}_B)$
are  flow equivalent if and only if
there exist another topological Markov shift 
$(\bar{X}_C,\bar{\sigma}_C)$ for some matrix $C$
and continuous maps
$f_A, f_B: \bar{X}_C\longrightarrow \N$
such that 
$(\bar{X}_A,\bar{\sigma}_A)$
is topologically conjugate to the discrete suspension 
$(\bar{X}_{C, f_A}, \bar{\sigma}_{C, f_A}) $
and
$(\bar{X}_B,\bar{\sigma}_B)$
is topologically conjugate to the discrete suspension 
$(\bar{X}_{C, f_B}, \bar{\sigma}_{C, f_B}).$

Cuntz--Krieger have first found that there is an interesting relation between 
flow equivalence  of topological Markov shifts and 
 Cuntz--Krieger algebras in \cite{CK}.
We denote by $\K$ and $\C$ the $C^*$-algebra of compact operators on the separable infinite dimensional Hilbert space $\ell^2(\N)$ and its commutative $C^*$-subalgebra of diagonal operators on $\ell^2(\N)$, respectively.
Cuntz--Krieger proved that for irreducible non-permutation matrices $A$ and $B$,    
if 
$(\bar{X}_A,\bar{\sigma}_A)$
and
$(\bar{X}_B,\bar{\sigma}_B)$
are  flow equivalent, then there exists an isomorphism
$\Phi:\OA\otimes\K \longrightarrow \OB\otimes\K$
of $C^*$-algebras 
such that $\Phi(\DA\otimes\C) = \DB\otimes\C.$ 
Its converse implication holds by \cite{MMKyoto}
(for more general matrices a similar assertion is shown in \cite{CEOR}).

In this paper, we will study flow equivalence of topological Markov shifts in terms of 
the extended Ruelle algebras with its action $\gamma^A$ of $\T^2.$

\section{Bilateral dimension groups}
We keep an irreducible, non-permutation matrix 
$A =[A(i,j)]_{i,j=1}^N$ with entries in $\{0,1\}.$
Following W. Krieger \cite{Krieger1980} (cf. \cite{Krieger1977}, \cite{Krieger1979}, \cite{Effros}, etc.),
the dimension group $(\Delta_A, \Delta_A^+)$
 are defined as  an ordered group by the inductive limits
\begin{equation*}
\Delta_A =  \Z^N \overset{A^t}{\longrightarrow}\Z^N \overset{A^t}{\longrightarrow}\cdots, \qquad
\Delta_A^ + =  \Zp^N \overset{A^t}{\longrightarrow}\Zp^N \overset{A^t}{\longrightarrow}\cdots. 
\end{equation*}
The group $\Delta_A $ is identified with the equivalence classes
of $\cup_{n=0}^{\infty}\{(v,n)\mid v \in \Z^N, n \in \Zp\}$
 by the equivalence relation generated by
$(v, n) \sim (A^t v, n+1)$.
The equivalence class of $(v,n)$ is denoted by $[v,n].$
The dimension drop automorphism
$\delta_A$ on $(\Delta_A, \Delta_A^+)$ is defined 
by
$\delta_A([v,n]) = [(v,n+1)]$ for $[v,n] \in \Delta_A.$
The triplet
$(\Delta_A, \Delta_A^+,\delta_A )$
is called the (future) dimension triplet for 
the topological Markov shift $(\bar{X}_A, \bar{\sigma}_A)$.
We similarly have the (future) dimension triplet
$(\Delta_{A^t}, \Delta_{A^t}^+,\delta_{A^t} )$
 for 
the topological Markov shift $(\bar{X}_{A^t}, \bar{\sigma}_{A^t})$ for
the matrix $A^t$, which is called the (past) dimension triplet for  
$(\bar{X}_A, \bar{\sigma}_A).$
Hence we have two dimension triplets
$(\Delta_A, \Delta_A^+,\delta_A )$ and 
$(\Delta_{A^t}, \Delta_{A^t}^+,\delta_{A^t} )$
for the matrix $A$.

Let $e_i \in \Z^N$ be the vector of $\Z^N$ whose $i$th component is $1,$
other components are zeros.
We will define a specific element
$\tilde{u}_A$ in $\Delta_{A^t}\otimes \Delta_{A}$
by setting
\begin{equation*}
\tilde{u}_A := \sum_{i,j=1}^N [e_j,1]\otimes A(j,i)[e_i,1] \in \Delta_{A^t}\otimes \Delta_{A}.
\end{equation*}
We then see that 
\begin{align*}
\tilde{u}_A 
= & \sum_{j=1}^N \left( [e_j,1]\otimes [\sum_{i=1}^NA(j,i)e_i, 1]\right) 
=  \sum_{j=1}^N [e_j,1]\otimes [A^t e_j, 1] 
=  (\id \otimes \delta_A^{-1})\sum_{j=1}^N ([e_j,1]\otimes [e_j, 1]) \\
\intertext{and}
\tilde{u}_A 
= & \sum_{i=1}^N \left( [\sum_{j=1}^NA(j,i)e_j,1]\otimes [e_i, 1]\right) 
=  \sum_{j=1}^N [Ae_i,1]\otimes [e_i, 1] 
=  (\delta_{A^t}^{-1}\otimes\id)\sum_{i=1}^N ([e_i,1]\otimes [e_i, 1] ).
\end{align*}
Define 
an automorphism 
$\tilde{\delta}_A : \Delta_{A^t}\otimes \Delta_{A}\longrightarrow  \Delta_{A^t}\otimes \Delta_{A}$
by $\tilde{\delta}_A =\delta_{A^t}^{-1}\otimes\delta_A$.
It satisfies
$$
\tilde{\delta}_A([u,n]\otimes[v,m]) = [Au,n]\otimes[v,m+1],
\qquad [u,n]\otimes[v,m] \in \Delta_{A^t}\otimes \Delta_A.
$$
We set the abelian group
$\tilde{\Delta}_A=\Delta_{A^t}\otimes \Delta_A
$ 
with its positive cone
$
\tilde{\Delta}_A^+=\Delta_{A^t}^+\otimes \Delta_A^+.
$
\begin{definition}
The quadruplet 
$( \tilde{\Delta}_A, \tilde{\Delta}_A^+, \tilde{\delta}_A, \tilde{u}_A)$
is called the {\it dimension quadruplet}\/ for the two-sided topological Markov shift
$(\bar{X}_A,\bar{\sigma}_A)$.
\end{definition}
We note that a bilateral version of the  dimension groups first appeared in Krieger's paper
\cite{Krieger1977} (cf.  \cite{Krieger1979}, \cite{Krieger1980}).
\begin{lemma}
$\tilde{\delta}_A(\tilde{u}_A) =\tilde{u}_A.$
\end{lemma}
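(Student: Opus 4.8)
The plan is to bypass any direct manipulation of the defining double sum for $\tilde{u}_A$ and instead exploit the two factored expressions for $\tilde{u}_A$ established immediately before the definition. Writing $\xi := \sum_{j=1}^N [e_j,1]\otimes[e_j,1] \in \Delta_{A^t}\otimes \Delta_A$ for the ``diagonal'' element, those computations say precisely that $\tilde{u}_A = (\id\otimes\delta_A^{-1})(\xi)$ and also $\tilde{u}_A = (\delta_{A^t}^{-1}\otimes\id)(\xi)$.

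Granting this, the lemma reduces to a one-line check. Since $\tilde{\delta}_A = \delta_{A^t}^{-1}\otimes\delta_A$ and the two tensor slots act independently, applying $\tilde{\delta}_A$ to the first expression gives $(\delta_{A^t}^{-1}\otimes\delta_A)(\id\otimes\delta_A^{-1})(\xi) = (\delta_{A^t}^{-1}\otimes\id)(\xi)$, because $\delta_A\circ\delta_A^{-1} = \id$ on $\Delta_A$; and the right-hand side is exactly the second expression for $\tilde{u}_A$. Hence $\tilde{\delta}_A(\tilde{u}_A) = \tilde{u}_A$, as claimed.

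The only point needing a little care — essentially the sole obstacle — is to be confident of the two factored formulas for $\tilde{u}_A$, which rests on the conventions for the dimension groups: inside $\Delta_{A^t}$ the connecting maps are multiplication by $(A^t)^t = A$, so $\delta_{A^t}^{-1}([u,n]) = [u,n-1] = [Au,n]$, while inside $\Delta_A$ one has $\delta_A^{-1}([v,m]) = [v,m-1] = [A^t v,m]$; combined with $A^t e_j = \sum_{i=1}^N A(j,i)e_i$ and $A e_i = \sum_{j=1}^N A(j,i)e_j$, these reproduce the identity $\tilde{u}_A = \sum_{i,j=1}^N [e_j,1]\otimes A(j,i)[e_i,1]$ from either factored form. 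A fully explicit alternative would be to apply the termwise formula $\tilde{\delta}_A([u,n]\otimes[v,m]) = [Au,n]\otimes[v,m+1]$ directly to the defining double sum and reindex using $[e_i,2] = [A^t e_i,1]$ in $\Delta_A$, but this involves more bookkeeping with the entries of $A$ and I would keep it only as a sanity check rather than the main argument.
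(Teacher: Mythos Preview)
Your proof is correct and is essentially the same as the paper's: both invoke the two factored expressions $\tilde{u}_A = (\id\otimes\delta_A^{-1})(\xi) = (\delta_{A^t}^{-1}\otimes\id)(\xi)$ established just before the lemma, together with $\tilde{\delta}_A = \delta_{A^t}^{-1}\otimes\delta_A$, to conclude immediately. You spell out the cancellation $(\delta_{A^t}^{-1}\otimes\delta_A)(\id\otimes\delta_A^{-1})(\xi) = (\delta_{A^t}^{-1}\otimes\id)(\xi)$ that the paper leaves implicit, but the argument is identical.
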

\begin{proof}
Since
\begin{equation*}
\tilde{u}_A  
=  (\id \otimes \delta_A^{-1})\sum_{j=1}^N ([e_j,1]\otimes [e_j, 1]) 
=  (\delta_{A^t}^{-1}\otimes\id)\sum_{i=1}^N ([e_i,1]\otimes [e_i, 1]),
\end{equation*}
and
$\tilde{\delta}_A =\delta_{A^t}^{-1}\otimes\delta_A,$
the assertion is immediate.
\end{proof}
We will next show that the dimension quadruplet 
$( \tilde{\Delta}_A, \tilde{\Delta}_A^+, \tilde{\delta}_A, \tilde{u}_A)$
is invariant under shift equivalence of the underlying matrices $A$.
The notion of shift equivalence in square matrices with entries in nonnegative integers has been introduced by W. F. Williams \cite{Williams}.
Two matrices $A$ and $B$ are said to be shift equivalent if there exist rectangular matrices
$H, K$  with entries in nonnegative integers and  a positive integer
$\ell$ such that   
\begin{equation}
A^\ell = H K, \quad B^\ell = KH, \quad AH = HB,\quad KA = BK.\label{eq:SE}
\end{equation}
W. Krieger has proved in \cite{Krieger1980} that two matrices $A$ and $B$
are shift equivalent if and only if their dimension triplet
$(\Delta_A, \Delta_A^+, \delta_A)$ and
$(\Delta_B, \Delta_B^+, \delta_B)$
are isomorphic.
 The following result has been already proved by 
C. G. Holton \cite[Proposition 6.7]{Holton}
for primitive matrices by using Rohlin property of automorphisms on the AF-algebras
$C^*(G_A^a)$. 
The proof given below does not use $C^*$-algebra theory,
nor does assume for the matrices to be primitive.
\begin{proposition}[{C. G.  Holton \cite[Proposition 6.7]{Holton}}] 
Suppose that $A$ and $B$ are shift equivalent.
Then there exists an isomorphism 
$\Phi: \tilde{\Delta}_A \longrightarrow \tilde{\Delta}_B$ which yields 
an isomorphism between the dimension quadruplets
$( \tilde{\Delta}_A, \tilde{\Delta}_A^+, \tilde{\delta}_A, \tilde{u}_A)$
and
$( \tilde{\Delta}_B, \tilde{\Delta}_B^+, \tilde{\delta}_B, \tilde{u}_B.)$
\end{proposition}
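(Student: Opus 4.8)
The plan is to unwind the shift equivalence data $A^\ell = HK$, $B^\ell = KH$, $AH = HB$, $KA = BK$ and use it to produce compatible isomorphisms $\Delta_{A}\to\Delta_{B}$ and $\Delta_{A^t}\to\Delta_{B^t}$, then tensor them. First I would recall the standard fact (essentially Krieger's) that $AH=HB$, $KA=BK$, $A^\ell=HK$, $B^\ell=KH$ force $H$ to induce an order isomorphism $\Delta_B\to\Delta_A$ at the level of inductive limits, with inverse induced by $K$ (up to the shift $\delta^{-\ell}$), because $HK$ and $KH$ act as $A^\ell$ and $B^\ell$ which are the $\ell$-fold dimension-drop maps. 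Precisely, define $\Phi_{+}\colon \Delta_B\to\Delta_A$ by $\Phi_{+}([v,n]) = [Hv,n]$; the relation $AH=HB$ (hence $A^t H^t = H^t B^t$ is \emph{not} what we want here — rather $A H = H B$ is used through $A^t\cdot$ on $\Delta$'s built from $A^t$, so I must be careful which transpose goes where) shows this is well defined and a homomorphism of ordered groups, and $K$ provides an inverse after identifying $\delta_A^\ell$ with the identity on the limit. Taking transposes in \eqref{eq:SE} gives $A^t{}^\ell = K^t H^t$, $B^t{}^\ell = H^t K^t$, $A^t H^t = H^t B^t$, $K^t A^t = B^t K^t$, which is exactly a shift equivalence between $A^t$ and $B^t$ implemented by $H^t$ and $K^t$; so the same construction yields an order isomorphism $\Phi_{-}\colon \Delta_{B^t}\to\Delta_{A^t}$ with $\Phi_{-}([u,n]) = [H^t u, n]$ (or $[K^t u, n]$, depending on orientation — I will fix the convention so that the two pieces are compatible).

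Next I would set $\Phi := \Phi_{-}\otimes\Phi_{+}\colon \tilde\Delta_B = \Delta_{B^t}\otimes\Delta_B \to \Delta_{A^t}\otimes\Delta_A = \tilde\Delta_A$ (I will reverse the roles of $A,B$ in the statement as needed so the direction matches the Proposition). That $\Phi$ is an order isomorphism is automatic from the two factor statements and $\tilde\Delta^+ = \Delta_{(\cdot)^t}^+\otimes\Delta_{(\cdot)}^+$. Intertwining $\tilde\delta$: since $\tilde\delta_A = \delta_{A^t}^{-1}\otimes\delta_A$ and $\Phi_{+}$ intertwines $\delta_B$ with $\delta_A$ (the maps $H$, $K$ commute with the structure maps of the inductive systems, which is literally $AH=HB$ etc.), while $\Phi_{-}$ intertwines $\delta_{B^t}$ with $\delta_{A^t}$, the tensor product intertwines $\delta_{B^t}^{-1}\otimes\delta_B = \tilde\delta_B$ with $\tilde\delta_A$. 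So the only genuinely new point is $\Phi(\tilde u_B) = \tilde u_A$.

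For the distinguished element I would use the formulas already derived in the excerpt:
\begin{equation*}
\tilde u_A = (\id\otimes\delta_A^{-1})\sum_{j=1}^N [e_j,1]\otimes[e_j,1], \qquad
\tilde u_B = (\id\otimes\delta_B^{-1})\sum_{k=1}^{N'} [e_k,1]\otimes[e_k,1],
\end{equation*}
so $\tilde u$ is, up to the automorphism $\id\otimes\delta^{-1}$, the image under $\Delta\otimes\Delta$ of the canonical ``diagonal'' class $\sum_i [e_i,1]\otimes[e_i,1]$. The key computation is therefore that $(\Phi_{-}\otimes\Phi_{+})$ sends the $B$-diagonal to the $A$-diagonal, which amounts to the identity $\sum_{k}[H^t e_k,1]\otimes[H e_k,1] = \sum_{i}[e_i,1]\otimes[e_i,1]$ in $\Delta_{A^t}\otimes\Delta_A$ after applying one structure map — and this is exactly the statement that $\sum_k (H^t e_k)\otimes(He_k)$ maps to $\sum_i e_i\otimes e_i$ under $(A^t\otimes A)$-type identifications, i.e. the matrix identity $H^t H$ versus $A$ read through the Kronecker product. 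Concretely, $\sum_k (H^t)_{ik}(H)_{jk} = (H H^t)_{ij}$ wait — it is $\sum_k H_{ik}H_{jk} = (HH^t)_{ij}$, and I will need to relate this to $A$ via one of the shift-equivalence relations, likely after hitting both sides with a structure map so that $HH^t$ or $H^tH$ gets absorbed into a power of $A$. \textbf{The main obstacle} is precisely bookkeeping this last identity: matching the ``diagonal'' vector $\sum_i e_i\otimes e_i$ under the Kronecker action of $H$ requires using $AH=HB$ and $HK$-type relations in tandem with the shift $\tilde\delta$, and getting the transposes and the powers $\ell$ to cancel correctly is delicate — one cannot simply say $H^tH = A$ since that is false. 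I expect the clean way out is to replace $H,K$ by a shift-equivalence pair normalized so that $\ell=1$ (an elementary reduction, passing to $\Delta$ which only remembers the limit), after which $A=HK$, $B=KH$, $AH=HB$ let one verify $\Phi(\tilde u_B)=\tilde u_A$ by a direct two-line computation; alternatively, invoke Holton's argument structurally, but the point of this Proposition is to avoid that, so I would carry out the normalization-and-compute route.
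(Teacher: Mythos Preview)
Your approach founders exactly where you flag it: by choosing maps induced by $H$ (or $H^t$) on \emph{both} tensor factors, the computation for the distinguished element lands on an expression governed by $HH^t$ (or $H^tH$), which the shift-equivalence relations give you no handle on. Your proposed escape --- ``normalize so that $\ell=1$'' --- is not a valid reduction: lag-$1$ shift equivalence is elementary equivalence, and shift equivalence does not in general reduce to it (indeed this is the strong-versus-shift equivalence problem). So the proposal, as written, does not close.

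The missing idea is to break the symmetry between the two factors: use $H^t$ on one and $K$ on the other. Concretely, the paper takes
\[
\Phi_r([v,k]) = [H^t v,\,k]\colon \Delta_A\to\Delta_B,
\qquad
\Phi_l([v,k]) = [Kv,\,k+\ell]\colon \Delta_{A^t}\to\Delta_{B^t},
\]
and sets $\Phi=\Phi_l\otimes\Phi_r$. Well-definedness and the intertwining of $\tilde\delta$ follow from $AH=HB$, $KA=BK$ exactly as you outline. The payoff of the asymmetric choice is in the $\tilde u$ computation: applying $\Phi$ to $\tilde u_A=\sum_{i,j}[e_j,1]\otimes A(j,i)[e_i,1]$ produces, after collecting indices, $\sum_l [(KAH)f_l,\,1+\ell]\otimes[f_l,1]$ in $\Delta_{B^t}\otimes\Delta_B$. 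Now the shift-equivalence relations combine as
\[
KAH \;=\; (BK)H \;=\; B(KH) \;=\; B\cdot B^\ell \;=\; B^{\ell+1},
\]
and $[B^{\ell+1}f_l,\,1+\ell]=[Bf_l,1]$ in $\Delta_{B^t}$, which is precisely $\tilde u_B$. No normalization of $\ell$ is needed; the extra $\ell$ built into $\Phi_l$ is exactly what absorbs the power of $B$. Once you use $K$ on one side and $H^t$ on the other, the whole thing is a two-line computation --- the ``obstacle'' disappears.
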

\begin{proof}
Let $A$ and $B$ be $N\times N$ matrix and $M\times M$ matrix, respectively.  
Assume that  there exist rectangular matrices
$H, K$  with entries in nonnegative integers and  a positive integer
$\ell$ satisfying \eqref{eq:SE}.
Define 
\begin{align*}
\Phi_r: \Delta_A \longrightarrow \Delta_B \quad 
&\text{ by } \quad \Phi_r([v,k]) = [H^tv,k],\\
\Phi_l: \Delta_{A^t} \longrightarrow \Delta_{B^t} \quad 
&\text{ by } \quad \Phi_l([v,k]) = [Kv, k+\ell], \\
\intertext{so that}
\Phi_r^{-1}: \Delta_B \longrightarrow \Delta_A \quad 
&\text{ satisfies } \quad \Phi_r^{-1}([u,j]) = [K^tu, j+\ell],\\
\Phi_l^{-1}: \Delta_{B^t} \longrightarrow \Delta_{A^t} \quad 
&\text{ satisfies } \quad \Phi_l^{-1}([u,j]) = [Hu,j]. 
\end{align*}
As in \cite{Krieger1980},
$\Phi_r: \Delta_A \longrightarrow \Delta_B
$
and
$
\Phi_l: \Delta_{A^t} \longrightarrow \Delta_{B^t} 
$
yield isomorphisms for each such that 
\begin{gather*}
\Phi_r(\Delta_A^+) =\Delta_B^+, \qquad
\Phi_r \circ \delta_A = \delta_B\circ\Phi_r, \\
\Phi_l(\Delta_{A^t}^+) =\Delta_{B^t}^+, \qquad
\Phi_l \circ \delta_{A^t} = \delta_{B^t}\circ\Phi_l.
 \end{gather*} 
Hence they induce isomorphisms
\begin{align*}
\Phi_r: (\Delta_A, \Delta_A^+, \delta_A)  
& \longrightarrow  (\Delta_B, \Delta_B^+, \delta_B),\\
\Phi_l: (\Delta_{A^t}, \Delta_{A^t}^+, \delta_{A^t})  
& \longrightarrow  (\Delta_{B^t}, \Delta_{B^t}^+, \delta_{B^t}).
\end{align*}
We define 
$\Phi = \Phi_l \otimes \Phi_r :\tilde{\Delta}_A \longrightarrow \tilde{\Delta}_B.$
Let $f_l\in \Z^M$ be the vector
whose $l$th component is $1$, and other components are zeros.  
It then follows that
\begin{align*}
\Phi(\tilde{u}_A) 
=& \sum_{i,j=1}^N \Phi_l( [e_j,1])\otimes\Phi_r(A(j,i)[e_i,1]) \\
=& \sum_{i,j=1}^N [Ke_j,1+\ell]\otimes A(j,i)[H^t e_i,1] \\
=& \sum_{j=1}^N [Ke_j,1+\ell]\otimes [(AH)^t e_j,1] \\
=& \sum_{j=1}^N [Ke_j,1+\ell]\otimes [\sum_{l=1}^M (AH)(j,l)f_l,1] \\
=& \sum_{l=1}^M \sum_{j=1}^N 
[
{\begin{bmatrix}
K(1,j)(AH)(j,l)\\
K(2,j)(AH)(j,l)\\
\vdots\\
K(M,j)(AH)(j,l)\\
\end{bmatrix}},
1+\ell]\otimes [f_l,1] \\
=& \sum_{l=1}^M 
[(KAH)f_l,1+\ell]\otimes [f_l,1] \\
=& \sum_{l=1}^M 
[(BKH)f_l,1+\ell]\otimes [f_l,1] \\
=& \sum_{l=1}^M 
[B^{\ell+1}f_l,1+\ell]\otimes [f_l,1] \\
=& \sum_{l=1}^M 
[Bf_l,1]\otimes [f_l,1]
=\tilde{u}_B.
\end{align*}
 \end{proof}
R. F. Williams characterized topological conjugate 
two-sided topological Markov shifts
$(\bar{X}_A, \bar{\sigma}_A)$
and 
$(\bar{X}_B, \bar{\sigma}_B)$ in terms of an equivalence relation of its underlying matrices,
called strong shift equivalence (\cite{Williams}).
Two square matrices $A$ and $B$ with entries in nonnegative integers are said to be elementary equivalent 
if there exist rectangular matrices $C, D$ with entries in nonnegative integers 
such that $A = CD, B = DC$. 
If two matrices are connected by a finite chain of elementary equivalences, they are said to be strong shift equivalent. 
R. F. Williams proved that      
two-sided topological Markov shift
$(\bar{X}_A, \bar{\sigma}_A)$
and 
$(\bar{X}_B, \bar{\sigma}_B)$
are topologically conjugate if and only if the matrices $A$ and $B$ are strong shift equivalent (\cite{Williams}).
Since strong shift equivalence is weaker than shift equivalence,
by virtue of the Williams' result,  we have 
\begin{proposition}
The dimension quadruplet 
$( \tilde{\Delta}_A, \tilde{\Delta}_A^+, \tilde{\delta}_A, \tilde{u}_A)$
is invariant under topological conjugacy of two-sided topological Markov shift
$(\bar{X}_A,\bar{\sigma}_A).$
\end{proposition}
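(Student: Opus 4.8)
The plan is to deduce this directly from the preceding proposition together with the theorem of R.~F.~Williams recalled just above. Suppose $(\bar{X}_A,\bar{\sigma}_A)$ and $(\bar{X}_B,\bar{\sigma}_B)$ are topologically conjugate. By Williams' theorem the matrices $A$ and $B$ are then strong shift equivalent, so there is a finite chain of elementary equivalences linking $A$ to $B$. Thus it suffices to check that strong shift equivalence implies shift equivalence, and then to apply the previous proposition.

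For the first step I would note that a single elementary equivalence $A = CD$, $B = DC$ is in particular a shift equivalence of lag $1$: taking $H = C$ and $K = D$ one has $A^1 = HK$, $B^1 = KH$, $AH = CDC = HB$ and $KA = DCD = BK$, which is exactly \eqref{eq:SE} with $\ell = 1$. Moreover the composite of a shift equivalence $A \sim B$ of lag $\ell_1$, realized by $(H_1,K_1)$, with a shift equivalence $B \sim C$ of lag $\ell_2$, realized by $(H_2,K_2)$, is a shift equivalence $A \sim C$ of lag $\ell_1+\ell_2$ realized by $H = H_1 H_2$ and $K = K_2 K_1$; indeed $HK = H_1 B^{\ell_2} K_1 = A^{\ell_2} H_1 K_1 = A^{\ell_1+\ell_2}$, $KH = K_2 B^{\ell_1} H_2 = C^{\ell_1+\ell_2}$, $AH = H_1 B H_2 = H C$ and $KA = K_2 B K_1 = C K$. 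Iterating along the chain therefore shows that $A$ and $B$ are shift equivalent.

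Finally I would invoke the previous proposition: since $A$ and $B$ are shift equivalent, there is an isomorphism of dimension quadruplets $(\tilde{\Delta}_A,\tilde{\Delta}_A^+,\tilde{\delta}_A,\tilde{u}_A) \cong (\tilde{\Delta}_B,\tilde{\Delta}_B^+,\tilde{\delta}_B,\tilde{u}_B)$, constructed there as $\Phi_l\otimes\Phi_r$ out of the rectangular matrices of a shift equivalence and simultaneously respecting the positive cone, the automorphism $\tilde{\delta}$ and the distinguished element $\tilde{u}$. This yields the claimed invariance under topological conjugacy. There is no genuine obstacle here; the only points to be careful about are that shift equivalences compose (the classical lag-additivity computation above) and that one does not need strong shift equivalence itself, the weaker shift equivalence already sufficing because the quadruplet, like Krieger's dimension triplet, is a shift-equivalence invariant rather than a complete conjugacy invariant.
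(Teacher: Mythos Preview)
Your proof is correct and follows exactly the same strategy as the paper: topological conjugacy implies strong shift equivalence by Williams' theorem, strong shift equivalence implies shift equivalence, and the previous proposition then gives the isomorphism of dimension quadruplets. The only difference is that you spell out the implication from strong shift equivalence to shift equivalence in detail (elementary equivalence is a lag-$1$ shift equivalence, and shift equivalences compose with additive lag), whereas the paper simply asserts this as known.
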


\section{Dimension quadruplets and AF-algebras}

In this section, we will study the dimension quadruplet 
$(\tilde{\Delta}_A,\tilde{\Delta}_A^+, \tilde{\delta}_A,\tilde{u}_A)$
by using K-theory for $C^*$-algebras.
D. B. Killough--I. F. Putnam in \cite{KilPut} have deeply studied ring and module structure
of the AF-algebras $C^*(G_A^s)$ as well as $C^*(G_A^a)$
from a different view point from ours below.  
Recall that $\K$ denotes the $C^*$-algebra of compact operators on the separable infinite dimensional Hilbert space $H =\ell^2(\N).$
\begin{lemma}\label{lem:4.1}
Let $A$ be an irreducible, non-permutation matrix with entries in $\{0, 1\}$.
\begin{enumerate}
\renewcommand{\theenumi}{\roman{enumi}}
\renewcommand{\labelenumi}{\textup{(\theenumi)}}
\item
There exists a projection $p_0$ in the crossed product 
 $\WRA\rtimes_{\gamma^A}\T^2$
of $\WRA$ by $\gamma^A$
such that 
$p_0(\WRA\rtimes_{\gamma^A}\T^2)p_0$
is isomorphic $C^*(G_A^a).$ 
Hence
$\WRA\rtimes_{\gamma^A}\T^2$ is stably isomorphic to the AF-algebra
$C^*(G_A^a).$
\item
The inclusion $\iota_A:p_0(\WRA\rtimes_{\gamma^A}\T^2)p_0 
\hookrightarrow \WRA\rtimes_{\gamma^A}\T^2$
induces an isomorphism
\begin{equation*}
\iota_{A*}: K_0(C^*(G_A^a)) \longrightarrow K_0(\WRA\rtimes_{\gamma^A}\T^2)
\end{equation*}
on K-theory where
 $C^*(G_A^a)$ is identified with $p_0(\WRA\rtimes_{\gamma^A}\T^2)p_0.$
\end{enumerate}
\end{lemma}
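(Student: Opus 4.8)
The plan is to establish part (i) by producing an explicit corner projection inside the crossed product, and then to derive part (ii) by a Morita-invariance-plus-full-projection argument. First I would recall the standard description of a crossed product by a torus action: $\WRA \rtimes_{\gamma^A} \T^2$ is the closure of the span of elements $a u_{(m,n)}$, $a \in \WRA$, $(m,n) \in \Z^2$ (the Pontryagin dual of $\T^2$), where the $u_{(m,n)}$ are the canonical unitary multipliers implementing the dual $\Z^2$-grading. Under the identification $\WRA \cong E_A(\OTA \otimes \OA)E_A$, the action $\gamma^A$ is the restriction of $\alpha^{A^t}_r \otimes \alpha^A_s$, whose fixed-point algebra is $C^*(G_A^a) \cong E_A(\FTA \otimes \FA)E_A$ by Lemma 2.x. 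The natural candidate for $p_0$ is the image of the unit of this fixed-point algebra inside the multiplier algebra, i.e. $p_0 = E_A u_{(0,0)}$, which is simply $E_A$ viewed in $\WRA \subset M(\WRA \rtimes_{\gamma^A}\T^2)$; more precisely, the conditional expectation $\Phi\colon \WRA \rtimes_{\gamma^A}\T^2 \to (\WRA)^{\gamma^A}$ realizes $(\WRA)^{\gamma^A}$ as $p_0(\WRA\rtimes_{\gamma^A}\T^2)p_0$ with $p_0$ the spectral projection onto the trivial character. Since $(\WRA)^{\gamma^A}\cong C^*(G_A^a)$ by Lemma 2.x(ii), this gives the corner isomorphism asserted in (i).

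Next, for the stable-isomorphism claim in (i) I would invoke the general principle that for an action of a compact abelian group (here $\T^2$, which is second countable and acts on a separable $C^*$-algebra) whose spectral subspaces are "full" in an appropriate sense, the corner $p_0(\WRA\rtimes_{\gamma^A}\T^2)p_0$ is a full corner, hence Morita equivalent — and for $\sigma$-unital algebras, stably isomorphic by the Brown–Green–Rieffel theorem — to the whole crossed product. Fullness of the corner amounts to checking that the ideal generated by $(\WRA)^{\gamma^A}$ inside $\WRA \rtimes_{\gamma^A}\T^2$ is everything; this follows because each spectral subspace $\WRA_{(m,n)}$ of $\WRA$ under $\gamma^A$ contains enough partial isometries — coming from words $S_\mu S_\nu^*$ and $T_{\bar\xi}T_{\bar\eta}^*$ with prescribed length defects — to connect the $(0,0)$-graded corner to every other graded piece. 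Concretely one exhibits, for each $(m,n)$, an element $w \in \WRA_{(m,n)}$ with $w^*w, ww^* \in C^*(G_A^a)$ nonzero, using irreducibility of $A$ and $A^t$ to arrange the necessary admissibility conditions on words; this is the analogue of the standard fact that $\OA \rtimes_{\alpha^A}\T$ is stably isomorphic to $\FA$.

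For part (ii), once the corner $p_0(\WRA\rtimes_{\gamma^A}\T^2)p_0 \cong C^*(G_A^a)$ is full, the inclusion $\iota_A$ of a full corner into a $\sigma$-unital $C^*$-algebra induces an isomorphism on $K$-theory; this is a direct consequence of the Morita equivalence established above, together with naturality of the $K$-functor under the corner embedding (the embedding $pXp \hookrightarrow X$ of a full corner is a $K$-equivalence). So $\iota_{A*}\colon K_0(C^*(G_A^a)) \to K_0(\WRA\rtimes_{\gamma^A}\T^2)$ is an isomorphism, as claimed.

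The main obstacle I anticipate is the verification of fullness of the corner, i.e. that $C^*(G_A^a)$ generates $\WRA\rtimes_{\gamma^A}\T^2$ as a (closed, two-sided) ideal. This requires a careful bookkeeping argument with the generators $T_{\bar\xi}T_{\bar\eta}^* \otimes S_\mu S_\nu^*$ of $\WRA$: one must show that for every pair of length-defects $(k-l, m-n)$ there is such a generator lying in the corresponding spectral subspace whose source and range projections sit inside $E_A(\FTA\otimes\FA)E_A$, which forces the words to satisfy the admissibility constraints $A(\xi_k,\mu_1) = A(\eta_l,\nu_1) = 1$ simultaneously with the length conditions — and here one leans on irreducibility (hence aperiodicity after passing to a power, or directly strong connectivity) of $A$ and $A^t$ to produce suitable extension words. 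Everything else — the corner description of the fixed-point algebra, Morita/stable isomorphism, the $K$-theory consequence — is standard once this combinatorial fullness is in place.
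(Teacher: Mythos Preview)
Your approach is correct and essentially identical to the paper's: take $p_0$ to be the spectral projection for the trivial character (the paper writes it as the constant function $p_0(r,s)=1$ in $L^1(\T^2,\WRA)$), identify the corner with the fixed-point algebra $(\WRA)^{\gamma^A}=C^*(G_A^a)$, and then invoke fullness of $p_0$ together with Brown's theorem \cite{Brown} for both (i) and (ii). One terminological slip to fix: in a crossed product by the \emph{compact} group $\T^2$ the Fourier modes $e_{(m,n)}(r,s)=e^{2\pi i(mr+ns)}$ are mutually orthogonal \emph{projections}, not unitaries, so your description ``span of $a\,u_{(m,n)}$ with $u_{(m,n)}$ canonical unitary multipliers'' is the picture for a discrete crossed product by $\Z^2$, not for $\T^2$ itself; this does not affect your argument, since you correctly land on $p_0$ as a projection anyway. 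As for fullness, the paper does not carry out the combinatorial spectral-subspace verification you sketch; it simply cites \cite{Rosenberg} and \cite{MaMathScand1998}, so the ``main obstacle'' you anticipate is dispatched by reference rather than by a direct word-length argument.
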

\begin{proof}
(i)
The fixed point algebra
$(\WRA)^{\gamma^A}$ of 
$\WRA$ under $\gamma^A$ 
coincides with the fixed point algebra
$ (E_A(\OTA\otimes\OA)E_A)^{\alpha^{A^t}\otimes\alpha^A} 
$
which is nothing but 
$ E_A(\FTA\otimes\FA)E_A.$
Hence
$(\WRA)^{\gamma^A}$ 
is identified with
$C^*(G_A^a).$ 
Let $p_0 \in L^1(\T^2, \WRA)$ be the projection defined by 
$p_0(r,s) =1$ for all $(r,s) \in \T^2.$
We know that $p_0$ is a full projection in 
$\WRA\rtimes_{\gamma^A}\T^2$
and
\begin{equation*}
p_0(\WRA\rtimes_{\gamma^A}\T^2)p_0 = (\WRA)^{\gamma^A}  = C^*(G_A^a)
\end{equation*}
by \cite{Rosenberg} or a similar manner to \cite{MaMathScand1998}.
This shows that the algebra  
$\WRA\rtimes_{\gamma^A}\T^2$ is stably isomorphic to the AF-algebra
$C^*(G_A^a)$ by \cite{Brown}.

(ii)
By \cite{Brown},
there exists a partial isometry 
$v_A$ in the multiplier algebra $M(\WRA\rtimes_{\gamma^A}\T^2\otimes\K)$
of $\WRA\rtimes_{\gamma^A}\T^2\otimes\K$
such that $v_A^*v_A = p_0,\, v_Av_A^* =1.$ 
Put
$
\psi_A = \Ad(v_A): p_0(\WRA\rtimes_{\gamma^A}\T^2)p_0\otimes\K
\longrightarrow \WRA\rtimes_{\gamma^A}\T^2\otimes\K, 
$
which is an isomorphism of $C^*$-algebras.
We then have 
for a projection  
$p_0 f p_0 \otimes q \in \WRA\rtimes_{\gamma^A}\T^2\otimes\K,$
\begin{align*}
(\iota_A\otimes\id)_*([p_0 f p_0 \otimes q])
=& [p_0 f p_0 \otimes q] \\
=& [v_A^*v_A(p_0 f p_0 \otimes q)v_A^*v_A] \\
=& [v_A(p_0 f p_0 \otimes q)v_A^*] \\
=& \psi_{A*}([p_0 f p_0 \otimes q]).
\end{align*}
Hence $\iota_{A*} =\psi_{A*}:K_0(C^*(G_A^a)) \longrightarrow
 K_0(\WRA\rtimes_{\gamma^A}\T^2)$
is an isomorphism.  
\end{proof}
Let us denote by
$\hat{\gamma}^A$ the dual action 
of the crossed product
$\WRA\rtimes_{\gamma^A}\T^2.$
Under the identifications
\begin{equation*}
C^*(G_A^a)= (\WRA)^{\gamma^A} =p_0(\WRA\rtimes_{\gamma^A}\T^2)p_0,
\end{equation*}
we define an action $\beta$ of $\Z^2$ on $K_0(C^*(G_A^a))$
by 
\begin{equation*}
\beta_{(m,n)}:=\iota_{A*}^{-1}\circ\hat{\gamma}^A_{(m,n)*}\circ\iota_{A*}:
K_0(C^*(G_A^a)) \longrightarrow K_0(C^*(G_A^a)), \qquad (m,n) \in \Z^2 
\end{equation*}
such that  the diagram
$$
\begin{CD}
K_0(\WRA\rtimes_{\gamma^A}\T^2) @>\hat{\gamma}^A_{(m,n) *} >> K_0(\WRA\rtimes_{\gamma^A}\T^2) \\
@A{\iota_{A*} }AA  @AA{\iota_{A*}}A \\
K_0(C^*(G_A^a)) @> \beta_{(m,n)} >> K_0(C^*(G_A^a)) 
\end{CD}
$$
is commutative.

Let $U_A =\sum_{i=1}^NT_i^*\otimes S_i$ in $\OTA\otimes\OA.$
As in \cite{MaPre2017a},
 $U_A$ is a unitary in $\RA$ and hence in $\WRA$, so that 
$U_A U_A^* = U_A^* U_A = E_A.$ 
We denote by $1_{C^*(G_A^a)}$
the unit of the $C^*$-algebra $C^*(G_A^a).$
By \cite{CuntzInvent80} and \cite{CK}
(see also \cite{Effros}, \cite{Krieger1979}, \cite{Krieger1980}),
the ordered group $\Delta_A$ is naturally identified with the K-group
$K_0(\FA)$.
\begin{lemma}
There exists an isomorphism
$\varphi_A: C^*(G_A^a)\otimes\K \longrightarrow \FTA\otimes\FA\otimes\K$
of $C^*$-algebras such that the induced isomorphism
\begin{equation*}
\varphi_{A*}:
K_0(C^*(G_A^a)) \longrightarrow K_0(\FTA\otimes\FA) =K_0(\FTA)\otimes K_0(\FA)
\end{equation*}
satisfies
\begin{gather*}
\varphi_{A*}([1_{C^*(G_A^a)}])= [E_A], \qquad
\varphi_{A*} \circ \Ad(U_A)_*  = \tilde{\delta}_A\circ\varphi_{A*}, \\
\varphi_{A*} \circ \beta_{(m,n)}\circ \varphi_{A*}^{-1} 
 = \delta_{A^t}^m \otimes\delta_A^n, \quad (m,n)\in \Z^2.
\end{gather*}
Hence the diagrams
$$
\begin{CD}
K_0(\WRA\rtimes_{\gamma^A}\T^2) @>\hat{\gamma}^A_{(m,n) *} >> K_0(\WRA\rtimes_{\gamma^A}\T^2) \\
@A{\iota_{A*} }AA  @AA{\iota_{A*}}A \\
K_0(C^*(G_A^a)) @> \beta_{(m,n)} >> K_0(C^*(G_A^a)) \\
@V{\varphi_{A*} }VV  @VV{\varphi_{A*}}V \\
K_0(\FTA)\otimes K_0(\FA)
@> \delta_{A^t}^m \otimes\delta_A^n >> 
K_0(\FTA)\otimes K_0(\FA)
\end{CD}
$$
are commutative.
\end{lemma}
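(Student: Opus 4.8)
The plan is to build $\varphi_A$ from the corner description $C^*(G_A^a) = E_A(\FTA\otimes\FA)E_A$ of the structure lemma above, and then to verify the three $K_0$-identities by evaluating the relevant maps on the standard generating projections. Since $A$ is irreducible and non-permutation, $\FA$ and $\FTA$ are simple AF-algebras, hence so is $\FTA\otimes\FA$, and therefore the nonzero projection $E_A$ is full in $\FTA\otimes\FA$. By L.\ Brown's stabilization theorem \cite{Brown} there is an isomorphism $\varphi_A\colon C^*(G_A^a)\otimes\K = E_A(\FTA\otimes\FA)E_A\otimes\K \longrightarrow (\FTA\otimes\FA)\otimes\K$, and it may be chosen so that $\varphi_{A*}$ is the canonical isomorphism $K_0(E_A(\FTA\otimes\FA)E_A)\cong K_0(\FTA\otimes\FA)$ induced by the corner inclusion; concretely $\varphi_{A*}([p]) = [p]$ for every projection $p$ in $C^*(G_A^a)$. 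Since $\FTA\otimes\FA$ is AF, $K_0$ commutes with the tensor product (the Künneth Tor-term vanishing, as $K_0$ of an AF-algebra is torsion-free and its $K_1$ is zero), so $K_0(\FTA\otimes\FA) = K_0(\FTA)\otimes K_0(\FA) = \Delta_{A^t}\otimes\Delta_A$. Now $1_{C^*(G_A^a)} = E_A$, whence $\varphi_{A*}([1_{C^*(G_A^a)}]) = [E_A]$; expanding $E_A = \sum_{j=1}^N T_j^*T_j\otimes S_jS_j^*$ via \eqref{eq:CK} and \eqref{eq:CKt} and using $[T_iT_i^*] = [e_i,1] \in \Delta_{A^t}$, $[S_jS_j^*] = [e_j,1] \in \Delta_A$ gives $[E_A] = \sum_{i,j=1}^N A(i,j)[e_i,1]\otimes[e_j,1] = \tilde u_A$, which is the first identity.

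For $\Ad(U_A)_*$ I would compute $\Ad(U_A)$ directly on the projections $p = T_{\bar\xi}T_{\bar\xi}^*\otimes S_\mu S_\mu^*$ of $C^*(G_A^a)$ (with $\bar\xi = (\xi_k,\dots,\xi_1)$, $\mu = (\mu_1,\dots,\mu_m)$, $A(\xi_k,\mu_1)=1$), whose classes generate $K_0(C^*(G_A^a))$ by the structure lemma. Writing $U_A = \sum_i T_i^*\otimes S_i$ and using \eqref{eq:CK}, \eqref{eq:CKt} one finds $T_a^* T_{\bar\xi}T_{\bar\xi}^* T_b = \delta_{a,\xi_k}\delta_{b,\xi_k}\, T_{\overline{\xi'}}T_{\overline{\xi'}}^*$ with $\xi' = (\xi_1,\dots,\xi_{k-1})$, so the double sum defining $U_A p U_A^*$ collapses to $T_{\overline{\xi'}}T_{\overline{\xi'}}^*\otimes S_{\xi_k\mu}S_{\xi_k\mu}^*$, which again lies in $C^*(G_A^a)$ by admissibility of $\bar\xi$. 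In the standard identifications $K_0(\FTA)=\Delta_{A^t}$, $K_0(\FA)=\Delta_A$ one has $[T_{\bar\xi}T_{\bar\xi}^*] = [e_{\xi_1},k]$ and $[S_\mu S_\mu^*] = [e_{\mu_m},m]$, so $\Ad(U_A)_*$ sends $[e_{\xi_1},k]\otimes[e_{\mu_m},m]$ to $[e_{\xi_1},k-1]\otimes[e_{\mu_m},m+1] = (\delta_{A^t}^{-1}\otimes\delta_A)\bigl([e_{\xi_1},k]\otimes[e_{\mu_m},m]\bigr)$. Since $A$ is irreducible these classes generate $K_0(\FTA\otimes\FA)$, and hence $\varphi_{A*}\circ\Ad(U_A)_* = \tilde\delta_A\circ\varphi_{A*}$.

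For $\beta_{(m,n)}$ I would run through the crossed-product picture of Lemma \ref{lem:4.1}. As $\gamma^A = \alpha^{A^t}\otimes\alpha^A$ and $\T^2 = \T\times\T$, we have $(\OTA\otimes\OA)\rtimes_{\gamma^A}\T^2 \cong (\OTA\rtimes_{\alpha^{A^t}}\T)\otimes(\OA\rtimes_{\alpha^A}\T)$ with dual action $\hat\gamma^A_{(m,n)} = (\widehat{\alpha^{A^t}})^m\otimes(\widehat{\alpha^A})^n$; by the known stable isomorphisms $\OTA\rtimes_{\alpha^{A^t}}\T\sim\FTA$ and $\OA\rtimes_{\alpha^A}\T\sim\FA$ (\cite{CK}, \cite{CuntzInvent80}, see also \cite{Rosenberg}), and by the very definition of $\delta_{A^t},\delta_A$ as the maps induced by the dual actions, $\hat\gamma^A_{(m,n)*} = \delta_{A^t}^m\otimes\delta_A^n$ on $K_0(\FTA)\otimes K_0(\FA)$. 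Because $\gamma^A(E_A)=E_A$ one has $\WRA\rtimes_{\gamma^A}\T^2 = E_A\bigl((\OTA\otimes\OA)\rtimes_{\gamma^A}\T^2\bigr)E_A$, and $E_A$ is a full projection there, being already full in the fixed-point algebra $\FTA\otimes\FA = p_0\bigl((\OTA\otimes\OA)\rtimes_{\gamma^A}\T^2\bigr)p_0$, itself a full corner; hence the corner inclusion is a $K_0$-isomorphism intertwining the restriction of $\hat\gamma^A$ with $\hat\gamma^A$. Composing these identifications with $\iota_{A*}$ and $\varphi_{A*}$ yields $\varphi_{A*}\circ\beta_{(m,n)}\circ\varphi_{A*}^{-1} = \delta_{A^t}^m\otimes\delta_A^n$, and the commutativity of the two displayed diagrams follows at once.

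The step I expect to be the main obstacle is the last one: one must make sure that all the identifications involved — Brown's stable isomorphism $\varphi_A$, the corner isomorphisms by $p_0$ and by $E_A$, and the passage from $\OA\rtimes_{\alpha^A}\T$ to $\FA$ — are mutually compatible, so that the maps they induce on $K_0$ are exactly those used to define $\iota_{A*}$ and $\beta$ and the diagram genuinely commutes, rather than only up to an unidentified automorphism. A secondary but real point of care is the bookkeeping of the directions of $\delta_A$ and $\delta_{A^t}$ in the computation of $\Ad(U_A)_*$: prepending a symbol raises the level in the inductive limit defining $\Delta_A$ but lowers it in the one defining $\Delta_{A^t}$, and it is precisely this asymmetry that produces $\delta_{A^t}^{-1}\otimes\delta_A = \tilde\delta_A$.
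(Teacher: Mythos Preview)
Your proposal is correct and follows essentially the same route as the paper: build $\varphi_A$ from Brown's theorem applied to the full corner $E_A(\FTA\otimes\FA)E_A$, then verify the $K_0$-identities on the generating projections $T_{\bar\xi}T_{\bar\xi}^*\otimes S_\mu S_\mu^*$ exactly as you do, invoking the known description of $\delta_A$ and $\delta_{A^t}$ on such classes. Your treatment of the third identity via the tensor decomposition of the $\T^2$-crossed product is in fact more explicit than the paper's (which leaves that step essentially unargued), and your stated concern about compatibility of the various corner and stable isomorphisms is the right one to flag---it is resolved precisely because every identification in play is induced by a full-corner inclusion and hence agrees with the canonical $K_0$-map.
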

\begin{proof}
Since the $C^*$-algebras 
$\FTA\otimes\FA$ is simple, 
the projection $E_A$ is full in $\FTA\otimes\FA$
By using Brown's theorem \cite{Brown},
there exists an isometry 
$u_A$ in the multiplier algebra
$M((\FTA\otimes\FA)\otimes\K)$
of
$(\FTA\otimes\FA)\otimes\K$
such that 
$u_A^* u_A =1, u_A u_A^* = E_A\otimes1_H.$
Define an isomorphism
$$
\varphi_A =\Ad(u_A^*):   
C^*(G_A^a)\otimes\K(=E_A(\FTA\otimes\FA)E_A\otimes\K)
 \longrightarrow \FTA\otimes\FA\otimes\K.
$$
Let  $p_1$ be a rank one projection in $\K$.
We then have
\begin{align*}
\varphi_{A*}([1_{C^*(G_A^a)}] )
=& \varphi_{A*}([E_A\otimes p_1]) \\
=& [u_A^*(E_A\otimes p_1)u_A] \\
=& [((E_A\otimes p_1)u_A)((E_A\otimes p_1)u_A)^*] \\
=& [E_A\otimes p_1] \\
=& [E_A].
\end{align*}
We will next see that 
$\varphi_{A*} \circ \Ad(U_A)_*  = \tilde{\delta}_A\circ\varphi_{A*}.
$
We note that $K_0(C^*(G_A^a))$ is generated by the classes of projections
of the form
$T_{\bar{\xi}}T_{\bar{\xi}}^* \otimes S_\mu S_\mu^*$
where 
$\mu = (\mu_1,\dots,\mu_m) \in  B_k(\bar{X}_A),
\,
\bar{\xi} =(\xi_k,\dots,\xi_1) \in B_k(\bar{X}_{A^t})$
with $A(\xi_k,\mu_1) =1.$
We then have
\begin{align*}
 (\varphi_{A*} \circ \Ad(U_A)_*)
 ([T_{\bar{\xi}}T_{\bar{\xi}}^*\otimes S_\mu S_\mu^*]) 
= & [\varphi_A(T_{\xi_{k-1} \cdots\xi_1}T_{\xi_{k-1} \cdots\xi_1}^*
\otimes S_{\xi_k\mu} S_{\xi_k\mu}^*)] \\
= & [u_A^*(T_{\xi_{k-1} \cdots\xi_1}T_{\xi_{k-1} \cdots\xi_1}^*
\otimes S_{\xi_k\mu} S_{\xi_k\mu}^*)u_A] \\
= & [T_{\xi_{k-1} \cdots\xi_1}T_{\xi_{k-1} \cdots\xi_1}^*
\otimes S_{\xi_k\mu} S_{\xi_k\mu}^*]. 
\end{align*}
On the other hand,
\begin{align*}
\tilde{\delta}_A\circ\varphi_{A*}
([T_{\bar{\xi}}T_{\bar{\xi}}^*\otimes S_\mu S_\mu^*]) 
 = &
(\delta_{A^t}^{-1}\otimes\delta_A)([
u_A^*(T_{\bar{\xi}}T_{\bar{\xi}}^*\otimes S_\mu S_\mu^*)u_A]) \\ 
 = &
(\delta_{A^t}^{-1}\otimes\delta_A)([
T_{\bar{\xi}}T_{\bar{\xi}}^*\otimes S_\mu S_\mu^*]) \\ 
 = &
\delta_{A^t}^{-1}([T_{\bar{\xi}}T_{\bar{\xi}}^*])\otimes 
\delta_A([S_\mu S_\mu^*]). 
\end{align*}
As in \cite[Lemma 4.5]{MaMathScand1998}, 
$
\delta_{A^t}^{-1}([T_{\bar{\xi}}T_{\bar{\xi}}^*]) = 
[T_{\xi_{k-1} \cdots\xi_1}T_{\xi_{k-1} \cdots\xi_1}^*]
$
and
$\delta_A([S_\mu S_\mu^*])
= [S_{\xi_k\mu} S_{\xi_k\mu}^*].
$
Hence we have 
$$
 (\varphi_{A*} \circ \Ad(U_A)_*)
 ([T_{\bar{\xi}}T_{\bar{\xi}}^*\otimes S_\mu S_\mu^*])
=\tilde{\delta}_A\circ\varphi_{A*}([T_{\bar{\xi}}T_{\bar{\xi}}^*\otimes S_\mu S_\mu^*]). 
$$
\end{proof}
We note that the K-theoretic class $[E_A]$ of the projection $E_A$ has appeared in 
studying of K-theoretic duality by J. Kaminker--I. F. Putnam \cite{KamPut}.
\begin{lemma}
Let $A=[A(i,j)]_{i,j=1}^N$ and $B=[B(i,j)]_{i,j=1}^M$ 
be irreducible, non-permutation matrices with entries in $\{0, 1\}$.
Suppose that there exists an isomorphism
$\Phi:\WRA\otimes\K \longrightarrow \WRB\otimes\K$
of $C^*$-algebras such that 
\begin{equation}
\Phi\circ(\gamma^A_{(r,s)}\otimes\id)
 = (\gamma^B_{(r,s)}\otimes\id)\circ\Phi, \quad
(r,s) \in \T^2. \label{eq:4.1}
\end{equation}  
\begin{enumerate}
\renewcommand{\theenumi}{\roman{enumi}}
\renewcommand{\labelenumi}{\textup{(\theenumi)}}
\item
Then $\Phi$ induces an isomorphism
\begin{equation*}
\Phi_{0*}: K_0(\FTA)\otimes K_0(\FA) \longrightarrow K_0(\FTB)\otimes K_0(\FB) 
\end{equation*}
such that 
\begin{equation*}
\Phi_{0*} \circ(\delta_{A^t}^m\otimes\delta_A^n) 
=(\delta_{B^t}^m\otimes\delta_B^n)\circ\Phi_{0*}, 
\qquad (m,n) \in \Z^2.
\end{equation*}
\item
There exist an $N^2\times M^2$-matrix $H,$
 an $M^2\times N^2$-matrix $K$
with entries in $\{0,1\}$ and a natural number $\ell$ such that 
\begin{gather}
(A^t\otimes A)^{\ell} =HK,\qquad
(B^t\otimes B)^{\ell} =KH, \label{eq:4.2}\\
(1\otimes A)H = H(1\otimes B), \qquad
K(1\otimes A) = (1\otimes B)K, \label{eq:4.3}\\
(A^t\otimes 1)H = H(B^t\otimes 1), \qquad
K(A^t\otimes 1) = (B^t\otimes 1)K. \label{eq:4.4}
\end{gather}
\end{enumerate}
\end{lemma}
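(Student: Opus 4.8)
The plan is to convert the hypothesis into a statement about the inductive limits that define the tensor products of dimension groups, and then to extract $H$ and $K$ by realising the resulting isomorphism at a finite stage, exactly as in Krieger's proof that an isomorphism of dimension triplets implies shift equivalence. Part~(i) is the translation step, carried out with Lemma~\ref{lem:4.1}, the $\Z^2$-action $\beta$ introduced above, and the preceding lemma on the isomorphism $\varphi_A$; part~(ii) is the finite-stage realisation, together with a refinement needed to obtain $\{0,1\}$-entries.

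For (i): since $\Phi$ intertwines $\gamma^A\otimes\id$ with $\gamma^B\otimes\id$, it induces an isomorphism of crossed products $\Phi\rtimes\T^2\colon(\WRA\otimes\K)\rtimes_{\gamma^A\otimes\id}\T^2\to(\WRB\otimes\K)\rtimes_{\gamma^B\otimes\id}\T^2$ intertwining the dual $\Z^2$-actions $\hat\gamma^A\otimes\id$ and $\hat\gamma^B\otimes\id$. Under the identification $(\WRA\otimes\K)\rtimes_{\gamma^A\otimes\id}\T^2\cong(\WRA\rtimes_{\gamma^A}\T^2)\otimes\K$, and since tensoring by $\K$ changes neither $K_0$ nor the induced action of $\Z^2$ on it, $(\Phi\rtimes\T^2)_*$ is an isomorphism $K_0(\WRA\rtimes_{\gamma^A}\T^2)\to K_0(\WRB\rtimes_{\gamma^B}\T^2)$ with $(\Phi\rtimes\T^2)_*\circ\hat\gamma^A_{(m,n)*}=\hat\gamma^B_{(m,n)*}\circ(\Phi\rtimes\T^2)_*$ for $(m,n)\in\Z^2$. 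Conjugating by the corner isomorphisms $\iota_{A*},\iota_{B*}$ of Lemma~\ref{lem:4.1}(ii) gives, by the very definition of $\beta$, an isomorphism $\iota_{B*}^{-1}\circ(\Phi\rtimes\T^2)_*\circ\iota_{A*}\colon K_0(C^*(G_A^a))\to K_0(C^*(G_B^a))$ intertwining $\beta_{(m,n)}$ with its $B$-analogue, and conjugating once more by the isomorphisms $\varphi_{A*},\varphi_{B*}$ of the preceding lemma, in view of $\varphi_{A*}\circ\beta_{(m,n)}\circ\varphi_{A*}^{-1}=\delta_{A^t}^m\otimes\delta_A^n$, produces $\Phi_{0*}:=\varphi_{B*}\circ\iota_{B*}^{-1}\circ(\Phi\rtimes\T^2)_*\circ\iota_{A*}\circ\varphi_{A*}^{-1}$, an isomorphism $K_0(\FTA)\otimes K_0(\FA)\to K_0(\FTB)\otimes K_0(\FB)$ with $\Phi_{0*}\circ(\delta_{A^t}^m\otimes\delta_A^n)=(\delta_{B^t}^m\otimes\delta_B^n)\circ\Phi_{0*}$. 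Since every map in this chain is induced by a $*$-isomorphism or by a full corner inclusion, $\Phi_{0*}$ is in fact an isomorphism of ordered groups, and because the inductive systems defining these $K_0$-groups have the $\{0,1\}$-valued connecting matrices $A\otimes A^t$, resp.\ $B\otimes B^t$, their positive cones are exactly $\Delta_{A^t}^+\otimes\Delta_A^+$ and $\Delta_{B^t}^+\otimes\Delta_B^+$; this extra ordered-group information, beyond what (i) literally claims, is what makes (ii) work.

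For (ii): identify $K_0(\FTA)\otimes K_0(\FA)$ with the dimension group $\varinjlim(\Z^{N^2}\overset{A\otimes A^t}{\longrightarrow}\Z^{N^2}\longrightarrow\cdots)$ of $A^t\otimes A$, with canonical maps $\iota^{(p,q)}_A\colon\Z^{N^2}\to\Delta_{A^t}\otimes\Delta_A$ on which $\delta_{A^t}\otimes\id$ raises the first index and $\id\otimes\delta_A$ the second. The finitely generated subgroup $\Phi_{0*}(\iota^{(0,0)}_A(\Z^{N^2}))$ lies in the image of $\iota^{(\ell,\ell)}_B$ for some $\ell$; using positivity of $\Phi_{0*}$ and nonnegativity of the connecting matrices, after enlarging $\ell$ one obtains a nonnegative integer matrix $H$ with $\Phi_{0*}\circ\iota^{(0,0)}_A=\iota^{(\ell,\ell)}_B\circ H$, and similarly (enlarging $\ell$ once more so that both hold with a common $\ell$) a nonnegative integer matrix $K$ with $\Phi_{0*}^{-1}\circ\iota^{(0,0)}_B=\iota^{(\ell,\ell)}_A\circ K$. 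Feeding $\Phi_{0*}\circ\Phi_{0*}^{-1}=\id=\Phi_{0*}^{-1}\circ\Phi_{0*}$ through these identities and through enough connecting maps yields the relations \eqref{eq:4.2}, exactly as in Krieger's argument, and applying the $\Z^2$-equivariance of $\Phi_{0*}$ to the single generators $\delta_{A^t}\otimes\id$ and $\id\otimes\delta_A$ and comparing at a finite stage yields the factored intertwiners \eqref{eq:4.3} and \eqref{eq:4.4} (with the transpose conventions of the identification $K_0(\FA)\cong\Delta_A$). The step I expect to be the main obstacle is arranging that $H$ and $K$ can be taken with entries in $\{0,1\}$ and of the prescribed sizes: the naive finite-stage construction produces only nonnegative integer matrices, and the reduction to $\{0,1\}$-entries should be achieved by realising $\Phi_{0*}$ at the level of the Bratteli diagrams of $\FTA\otimes\FA$ and $\FTB\otimes\FB$ — whose edges have multiplicity one because $A\otimes A^t$ and $B\otimes B^t$ have entries in $\{0,1\}$ — and choosing the intervening levels so that the transition data between the two diagrams again has multiplicity one; carrying this out compatibly with \eqref{eq:4.2}–\eqref{eq:4.4} is the technical heart of the proof.
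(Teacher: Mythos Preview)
Your argument for both parts is essentially the paper's argument. For (i) you build $\Phi_{0*}$ exactly as the paper does, by passing to crossed products, using the dual-action equivariance, and conjugating by $\iota_{A*},\iota_{B*}$ and then $\varphi_{A*},\varphi_{B*}$; for (ii) you reduce to Krieger's finite-stage realisation of an isomorphism of dimension triplets, and then use the separate $(\id\otimes\delta_A)$- and $(\delta_{A^t}\otimes\id)$-equivariance of $\Phi_{0*}$ to upgrade the standard shift-equivalence intertwiners to the factored relations \eqref{eq:4.3} and \eqref{eq:4.4}. That is precisely what the paper does.

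The one place you diverge is your final paragraph, where you treat the requirement that $H$ and $K$ have entries in $\{0,1\}$ as ``the technical heart of the proof'' and sketch a Bratteli-diagram argument to achieve it. This concern is misplaced: the paper's own proof only produces $H,K$ with entries in the nonnegative integers (it says so explicitly when invoking Krieger's theorem), and the downstream use of the lemma --- namely Lemma~\ref{lem:4.4} --- requires only the algebraic relations \eqref{eq:4.2}--\eqref{eq:4.4}, not any bound on the entries. The $\{0,1\}$ in the statement is a slip; there is nothing further to prove, and your proposed Bratteli refinement is unnecessary.
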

\begin{proof}
(i)
Since 
$\Phi:\WRA\otimes\K \longrightarrow \WRB\otimes\K$
is an isomorphism of $C^*$-algebras satisfying \eqref{eq:4.1}, 
it induces an isomorphism
\begin{equation*}
\Phi_1: (\WRA\otimes\K)\rtimes_{\gamma^A\otimes\id}\T^2
\longrightarrow
(\WRB\otimes\K)\rtimes_{\gamma^B\otimes\id}\T^2
\end{equation*}
of $C^*$-algebras of the crossed products.
Let $\hat{\gamma}^A, \hat{\gamma}^B$
be the dual actions on
$\WRA\rtimes_{\gamma^A}\T^2,
\WRB\rtimes_{\gamma^A}\T^2,
$
respectively.
By identifying 
$(\WRA\otimes\K)\rtimes_{\gamma^A\otimes\id}\T^2$ 
with
$(\WRA\rtimes_{\gamma^A}\T^2)\otimes\K,$
and
$(\WRB\otimes\K)\rtimes_{\gamma^B\otimes\id}\T^2$ 
with
$(\WRB\rtimes_{\gamma^B}\T^2)\otimes\K,$
we see that  
\begin{equation*}
\Phi_1\circ(\hat{\gamma}^A_{(m,n)}\otimes\id)
 = (\hat{\gamma}^B_{(m,n)}\otimes\id)\circ\Phi_1, \quad
(m,n) \in \Z^2. 
\end{equation*}
Hence
we have an isomorphism
\begin{equation*}
\Phi_{1*}: K_0(\WRA\rtimes_{\gamma^A}\T^2)
\longrightarrow
K_0(\WRB\rtimes_{\gamma^B}\T^2)
\end{equation*}
such that 
$$
\Phi_{1*}\circ\hat{\gamma}^A_{(m,n)*}=  
\hat{\gamma}^B_{(m,n)*}\circ\Phi_{1*}, \qquad (m,n) \in \Z^2.
$$
We then define
$\Phi_{0*}: K_0(\FTA)\otimes K_0(\FA)\longrightarrow 
K_0(\FTB)\otimes K_0(\FB)
$
by setting
\begin{equation*}
\Phi_{0*} = \varphi_{B*}\circ\iota_{B*}^{-1}\circ\Phi_{1*}\circ\iota_{A*}\circ\varphi_{A*}^{-1},=
\end{equation*}
where 
$\iota_{A*}: K_0(C^*(G_A^a)) =K_0(p_0(\WRA\rtimes_{\gamma^A}\T^2){p_0}) \longrightarrow
K_0(\WRA\rtimes_{\gamma^A}\T^2)
$ is the isomorphism in Lemma \ref{lem:4.1} (ii).
Hence the following diagram is commutative:
$$
\begin{CD}
K_0(\WRA\rtimes_{\gamma^A}\T^2) @>\Phi_{1*} >> K_0(\WRB\rtimes_{\gamma^B}\T^2) \\
@A{\iota_{A*} }AA  @AA{\iota_{B*}}A \\
K_0(p_0(\WRA\rtimes_{\gamma^A}\T^2){p_0}) @. K_0(p_0(\WRB\rtimes_{\gamma^B}\T^2)p_0) \\
@| @| \\
K_0(C^*(G_A^a)) @. K_0(C^*(G_A^a)) \\
@V{\varphi_{A*} }VV  @VV{\varphi_{B*}}V \\
K_0(\FTA)\otimes K_0(\FA)
@> \Phi_{0*} >> 
K_0(\FTA)\otimes K_0(\FA).
\end{CD}
$$
We then have 
\begin{align*}
& \Phi_{0*}\circ(\delta_{A^t}^m\otimes\delta_A^n) \\
=& (\varphi_{B*}\circ\iota_{B*}^{-1}\circ\Phi_{1*}\circ\iota_{A*}\circ\varphi_{A*}^{-1})
\circ
(\varphi_{A*}\circ\iota_{A*}^{-1}\circ\hat{\gamma}^A_{(m,n)*}\circ\iota_{A*}\circ\varphi_{A*}^{-1}\\
=& \varphi_{B*}\circ\iota_{B*}^{-1}\circ\Phi_{1*}\circ\hat{\gamma}^A_{(m,n)*}
\circ\iota_{A*}\circ\varphi_{A*}^{-1}\\
=& \varphi_{B*}\circ\iota_{B*}^{-1}\circ\hat{\gamma}^B_{(m,n)*}\circ\Phi_{1*}
\circ\iota_{A*}\circ\varphi_{A*}^{-1}\\
=& (\varphi_{B*}\circ\iota_{B*}^{-1}\circ\hat{\gamma}^B_{(m,n)*}\circ\iota_{B*}\circ\varphi_{B*}^{-1})
\circ
(\varphi_{B*}\circ\iota_{B*}^{-1}\circ\Phi_{1*}\circ\iota_{A*}\circ\varphi_{A*}^{-1})\\
=&(\delta_{B^t}^m\otimes\delta_B^n)\circ\Phi_{0*}.
\end{align*}

(ii)
By (i)
the isomorphism
$\Phi:\WRA\otimes\K \longrightarrow \WRB\otimes\K$
 satisfying \eqref{eq:4.1}
induces an isomorphism
\begin{equation*}
\Phi_{0*}: K_0(\FTA)\otimes K_0(\FA) \longrightarrow
K_0(\FTB)\otimes K_0(\FB) 
\end{equation*}
of ordered groups such that 
$\Phi_{0*}\circ(\delta_{A^t}^m\otimes\delta_A^n) 
=(\delta_{B^t}^m\otimes\delta_B^n)\circ\Phi_{0*}.$ 
Now 
\begin{equation*}
K_0(\FA)=\lim\{ \Z^N\overset{A^t}{\longrightarrow} \Z^N
\overset{A^t}{\longrightarrow} \cdots\}, \qquad
K_0(\FTA)=\lim\{ \Z^N\overset{A}{\longrightarrow} \Z^N
\overset{A}{\longrightarrow} \cdots\}
\end{equation*}
and
the dimension drop automorphisms
$\delta_A:K_0(\FA)\longrightarrow K_0(\FA)$
and
$\delta_{A^t}:K_0(\FTA)\longrightarrow K_0(\FTA)$
are defined by
$\delta_A([x,n]) = [x,n+1] (=[Ax,n])$ for $[x,n] \in K_0(\FA)$
and 
$\delta_{A^t}([y,n]) = [y,n+1] (=[A^ty,n])$ for $[y,n] \in K_0(\FTA),$
respectively
(\cite{CuntzInvent80}, \cite{CK}).
Since $K_0(\FTA)\otimes K_0(\FA) =K_0({\mathcal{F}}_{A^t\otimes A})$ 
and
$\delta_{A^t} \otimes\delta_A =\delta_{A^t\otimes A},$
we have an isomorphism of dimension triplets 
$$
(K_0({\mathcal{F}}_{A^t\otimes A}), \delta_{A^t\otimes A})
 \cong 
(K_0({\mathcal{F}}_{B^t\otimes B}), \delta_{B^t\otimes B})
$$
with dimension drop automorphisms.
Hence the two matrices
$A^t\otimes A$ and $B^t\otimes B$
are shift equivalent by \cite{Krieger1980}, 
which means that 
there exist an $N^2\times M^2$-matrix $H,$
 an $M^2\times N^2$-matrix $K$
with entries in nonnegative integers and a natural number $\ell$ such that 
\begin{gather*}
(A^t\otimes A)^{\ell} =HK,\qquad
(B^t\otimes B)^{\ell} =KH,\\
(A^t\otimes A)H = H(B^t\otimes B), \qquad
K(A^t\otimes A) = (B^t\otimes B)K.
\end{gather*}
Since
\begin{align*}
K_0(\FTA) \otimes K_0(\FA)
=& \lim\{ \Z^N\overset{A}{\longrightarrow} \Z^N
\overset{A}{\longrightarrow} \cdots\}
\otimes
\lim\{ \Z^N\overset{A^t}{\longrightarrow} \Z^N
\overset{A^t}{\longrightarrow} \cdots\} \\
\cong &
\lim\{ \Z^N\otimes\Z^N\overset{A\otimes A^t}{\longrightarrow} 
\Z^N\otimes\Z^N\overset{A\otimes A^t}{\longrightarrow} \cdots\}
\end{align*}
and similarly
\begin{equation*}
K_0(\FTB) \otimes K_0(\FTB)
\cong 
\lim\{ \Z^M\otimes\Z^M\overset{B\otimes B^t}{\longrightarrow} 
\Z^M\otimes\Z^M\overset{B\otimes B^t}{\longrightarrow} \cdots\},
\end{equation*}
$\Phi_{0*}$ induces an isomorphism
$$
\lim\{ \Z^N\otimes\Z^N\overset{A\otimes A^t}{\longrightarrow} 
\Z^N\otimes\Z^N\overset{A\otimes A^t}{\longrightarrow} \cdots\}
\longrightarrow
\lim\{ \Z^M\otimes\Z^M\overset{B\otimes B^t}{\longrightarrow} 
\Z^M\otimes\Z^M\overset{B\otimes B^t}{\longrightarrow} \cdots\},
$$
which satisfies
\begin{gather}
\Phi_{0*}\circ(\id \otimes\delta_A) 
= (\id \otimes\delta_B)\circ\Phi_{0*}, \label{eq:4.5} \\
\Phi_{0*}\circ(\delta_{A^t}\otimes\id) 
= (\delta_{B^t}\otimes\id\circ\Phi_{0*}. \label{eq:4.6}
\end{gather}
By the conditions \eqref{eq:4.5} and \eqref{eq:4.6},
we may take the matrices $H,K$ satisfying
\eqref{eq:4.2},\eqref{eq:4.3} and \eqref{eq:4.4}.
\end{proof}
For a matrix $A$, let us denote by
${\operatorname{Sp}}^{\times}(A)$
and
$\Spm(A)$
the set of nonzero eigenvalues of $A$ 
and the list of nonzero repeated eigenvalues of $A$ 
according to their multiplicity,
respectively.
\begin{lemma}\label{lem:4.4}
Let $A, B$ be irreducible, non-permutation matrices with entries in $\{0,1\}.$
Suppose that 
 there exist an $N^2\times M^2$-matrix $H,$
 an $M^2\times N^2$-matrix $K$
with entries in $\{0,1\}$ and a natural number $\ell$ satisfying
\eqref{eq:4.2},\eqref{eq:4.3} and \eqref{eq:4.4}.
Then 
\begin{equation}
\Spm(A^t\otimes A) =\Spm(B^t\otimes B)\quad
\text{ and }
 \quad
{\operatorname{Sp}}^{\times}(A)
= {\operatorname{Sp}}^{\times}(B). \label{eq:4.41}
\end{equation} 
\end{lemma}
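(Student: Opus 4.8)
The statement to prove is Lemma \ref{lem:4.4}: from the existence of nonnegative-integer matrices $H, K$ and $\ell \in \N$ satisfying \eqref{eq:4.2}, \eqref{eq:4.3}, \eqref{eq:4.4}, deduce the two spectral identities in \eqref{eq:4.41}. The first identity, $\Spm(A^t\otimes A) = \Spm(B^t\otimes B)$, is the classical fact that shift equivalence preserves the list of nonzero eigenvalues with multiplicity: \eqref{eq:4.2} together with $(A^t\otimes A)H = H(B^t\otimes B)$ and $K(A^t\otimes A) = (B^t\otimes B)K$ (which follow by combining \eqref{eq:4.3} and \eqref{eq:4.4}) say precisely that $A^t\otimes A$ and $B^t\otimes B$ are shift equivalent, and Williams' theory gives that the nonzero spectrum with multiplicity is a shift-equivalence invariant. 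So the first half is immediate and I would just cite this.

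The real content is the second identity, $\operatorname{Sp}^\times(A) = \operatorname{Sp}^\times(B)$, as a set. The idea is to exploit the \emph{factored} intertwining relations \eqref{eq:4.3} and \eqref{eq:4.4}: $H$ intertwines $1\otimes A$ with $1\otimes B$ and also intertwines $A^t\otimes 1$ with $B^t\otimes 1$ (and likewise $K$ in the reverse direction). First I would observe that if $\mu \in \operatorname{Sp}^\times(B)$, pick an eigenvector $w$ of $B$ and an eigenvector $w'$ of $B^t$ for some nonzero eigenvalue $\lambda$ of $B$; then $w' \otimes w$ satisfies $(1\otimes B)(w'\otimes w) = \mu (w'\otimes w)$ and $(A^t\otimes 1)$-, $(1\otimes A)$-type relations transport through $H$. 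Concretely, from \eqref{eq:4.3}, $H$ maps the $\mu$-eigenspace of $1\otimes B$ into the $\mu$-eigenspace of $1\otimes A$; the point is to show this restricted map is nonzero for each $\mu \in \operatorname{Sp}^\times(B)$, and symmetrically for $K$, so that $\operatorname{Sp}^\times(B) \subseteq \operatorname{Sp}^\times(A)$ and conversely. Nonvanishing follows from \eqref{eq:4.2}: $(B^t\otimes B)^\ell = KH$ is invertible on the sum of all nonzero generalized eigenspaces of $B^t\otimes B$, hence $H$ is injective there; since the $\mu$-eigenspace of $1\otimes B$ (intersected with the support of $B^t\otimes B$, i.e. using eigenvectors of $B^t$ for nonzero eigenvalues) is a nonzero subspace of that, $H$ is nonzero on it.

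To extract $\operatorname{Sp}^\times(A)$ rather than $\operatorname{Sp}^\times(A^t\otimes A)$, I would work on the vector space $V = (\text{span of nonzero generalized eigenspaces of } A^t\otimes A)$, on which $A^t\otimes A$ is invertible, and decompose the two commuting operators $L_A := A^t \otimes 1$ and $R_A := 1\otimes A$ simultaneously: $V$ has a basis of common eigenvectors with $L_A$-eigenvalue $\lambda_i$ (a nonzero eigenvalue of $A^t$, equivalently of $A$) and $R_A$-eigenvalue $\mu_j$ (a nonzero eigenvalue of $A$), and $\operatorname{Sp}^\times(A)$ is exactly the set of $R_A$-eigenvalues occurring. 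By \eqref{eq:4.3}, \eqref{eq:4.4}, $H|_V$ and $K|_W$ (with $W$ the analogous space for $B$) intertwine both pairs $(L_A,R_A)$ and $(L_B,R_B)$; by \eqref{eq:4.2} they are mutually inverse up to the power $(A^t\otimes A)^\ell$, hence isomorphisms $V \cong W$ respecting the bigrading by $(L,R)$-eigenvalue pairs. In particular the sets of occurring $R$-eigenvalues coincide, which is $\operatorname{Sp}^\times(A) = \operatorname{Sp}^\times(B)$.

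\textbf{Main obstacle.} The delicate point is bookkeeping the support/kernel issue: $H$ and $K$ have large kernels (the generalized eigenspace for eigenvalue $0$), so one cannot simply say "$H$ conjugates $A^t\otimes A$ to $B^t\otimes B$." I expect the main work to be in rigorously passing to the invertible parts $V$ and $W$, checking that the eventual-range of $H$ equals $V$ (resp. $W$ for $K$) and that $H|_W \colon W \to V$ is a bijection, all while verifying it is compatible with the finer simultaneous decomposition under $L$ and $R$. Once that is set up, reading off the $R$-spectrum as a set is routine. I would also need the elementary fact that for an irreducible nonnegative matrix $A$, the nonzero eigenvalues of $A$ and of $A^t$ coincide with multiplicity, so that the $L_A$-spectrum and $R_A$-spectrum on $V$ are both copies of $\operatorname{Sp}^\times(A)$; this is standard and needs only a remark.
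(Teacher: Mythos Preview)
Your approach is correct and shares the paper's core idea: shift equivalence of $A^t\otimes A$ and $B^t\otimes B$ (obtained by combining \eqref{eq:4.3} and \eqref{eq:4.4} with \eqref{eq:4.2}) gives the first identity, and the factored intertwining relations together with nonvanishing from \eqref{eq:4.2} give the second. The paper's execution of the second part is, however, considerably more direct than your bigraded-isomorphism framework. It simply fixes $\lambda \in \operatorname{Sp}^\times(A)$, chooses eigenvectors $u,v$ with $A^t u=\lambda u$ and $Av=\lambda v$, and computes: from $K(1\otimes A)=(1\otimes B)K$ one has $(1\otimes B)\,K(u\otimes v)=\lambda\,K(u\otimes v)$, and from $HK=(A^t\otimes A)^\ell$ one has $HK(u\otimes v)=\lambda^{2\ell}(u\otimes v)\ne 0$, so $K(u\otimes v)$ is a nonzero $\lambda$-eigenvector of $1\otimes B$, whence $\lambda\in\operatorname{Sp}^\times(1\otimes B)=\operatorname{Sp}^\times(B)$. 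The symmetric argument with $H$ gives the reverse inclusion. This single-vector computation completely sidesteps the kernel/support bookkeeping you flagged as the main obstacle; there is no need to pass to invertible parts $V,W$ or to set up a full simultaneous eigenspace decomposition.
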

\begin{proof}
We note that $\Spm(A) = \Spm(A^t).$
By the equalities \eqref{eq:4.3} and \eqref{eq:4.4},
we see that 
\begin{equation}
(A^t\otimes A)H = H(B^t\otimes B), \qquad
K(A^t\otimes A)= (B^t\otimes B)K. \label{eq:4.7}
\end{equation}
The equalities \eqref{eq:4.2} together with \eqref{eq:4.7}
show us that the matrices  
$A^t\otimes A$ and 
$B^t \otimes B$
are shift equivalent, so that 
\begin{equation}
\Spm(A^t\otimes A) =\Spm(B^t\otimes B) \label{eq:4.8} 
\end{equation}
by \cite[Theorem 7.4.10]{LM}.
For $\lambda \in {\operatorname{Sp}}^{\times}(A),$
one may take nonzero eigenvectors 
$u, v \in \mathbb{C}^N$ such that 
$Av = \lambda v$ and $A^tu =\lambda u$.
By \eqref{eq:4.3}, we have 
$$
(1\otimes B)K(u\otimes v) = K(1\otimes A)(u\otimes v)  
= K(u\otimes \lambda v)
= \lambda K(u\otimes v).
$$
By \eqref{eq:4.2}, we have 
$$
HK(u\otimes v) = (A^t\otimes A)^{\ell}(u\otimes v) = \lambda^\ell(u\otimes v)
$$
so that the vector $K(u\otimes v)$ is a nonzero eigenvector of the matrix $1\otimes B$
for the eigenvalue $\lambda.$
Hence 
$\lambda \in {\operatorname{Sp}}^{\times}(1\otimes B).$
Since 
${\operatorname{Sp}}^{\times}(1\otimes B)={\operatorname{Sp}}^{\times}( B),$
we have 
$\lambda \in {\operatorname{Sp}}^{\times}( B),$
so that 
${\operatorname{Sp}}^{\times}(A)
\subset {\operatorname{Sp}}^{\times}(B).
$
Similarly we have 
${\operatorname{Sp}}^{\times}(B)
\subset {\operatorname{Sp}}^{\times}(A)
$
and hence
${\operatorname{Sp}}^{\times}(A)
= {\operatorname{Sp}}^{\times}(B).
$
\end{proof}
\begin{lemma}\label{lem:4.5}
Suppose that  two irreducible, non-permutation matrices $A, B$ with entries in 
$\{0,1\}$ satisfy \eqref{eq:4.41}. 
Then we have
$$
\Spm(A) = \Spm(B) \quad
\text{ and hence }
\quad
\det(1 - A) = \det(1-B).
$$
\end{lemma}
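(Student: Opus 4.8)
The plan is to pass from the multiset equality $\Spm(A^t\otimes A)=\Spm(B^t\otimes B)$ to an equality of traces of powers of $A$ and $B$, and then to remove a square using the fact that a nonnegative matrix has nonnegative traces of all of its powers.

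First I would record the elementary identity $\operatorname{Tr}((A^t\otimes A)^n)=(\operatorname{Tr}(A^n))^2$, valid for every $n\ge 1$: it follows from $(A^t\otimes A)^n=(A^n)^t\otimes A^n$ together with $\operatorname{Tr}(X\otimes Y)=\operatorname{Tr}(X)\operatorname{Tr}(Y)$ and $\operatorname{Tr}((A^n)^t)=\operatorname{Tr}(A^n)$. On the other hand, for $n\ge 1$ the number $\operatorname{Tr}(M^n)$ equals the sum of the $n$-th powers of the nonzero eigenvalues of $M$, namely $\sum_{\lambda\in\Spm(M)}\lambda^n$. Hence the hypothesis $\Spm(A^t\otimes A)=\Spm(B^t\otimes B)$ gives $\operatorname{Tr}((A^t\otimes A)^n)=\operatorname{Tr}((B^t\otimes B)^n)$, and therefore $(\operatorname{Tr}(A^n))^2=(\operatorname{Tr}(B^n))^2$ for all $n\ge 1$. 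Since $A$ and $B$ have entries in $\{0,1\}$, the matrices $A^n$ and $B^n$ are nonnegative, so $\operatorname{Tr}(A^n)\ge 0$ and $\operatorname{Tr}(B^n)\ge 0$; combined with the previous equality this yields $\operatorname{Tr}(A^n)=\operatorname{Tr}(B^n)$ for all $n\ge 1$.

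It then remains to deduce $\Spm(A)=\Spm(B)$. We have $\sum_{\lambda\in\Spm(A)}\lambda^n=\operatorname{Tr}(A^n)=\operatorname{Tr}(B^n)=\sum_{\lambda\in\Spm(B)}\lambda^n$ for all $n\ge 1$, and a finite multiset of nonzero complex numbers is determined by the sequence of its power sums (for instance because these are the poles, counted with multiplicity, of the rational function $\sum_{n\ge 1}p_n t^n$, or by Newton's identities); hence $\Spm(A)=\Spm(B)$. The stated consequence is then immediate: $\det(1-A)=\prod_{i=1}^N(1-\lambda_i)$ over all eigenvalues $\lambda_i$ of $A$, the zero eigenvalues contributing trivial factors, so $\det(1-A)=\prod_{\lambda\in\Spm(A)}(1-\lambda)$, and similarly for $B$; therefore $\det(1-A)=\det(1-B)$.

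I do not expect a genuine obstacle in carrying this out. The only delicate point is the passage from $(\operatorname{Tr}(A^n))^2=(\operatorname{Tr}(B^n))^2$ to $\operatorname{Tr}(A^n)=\operatorname{Tr}(B^n)$, for which the nonnegativity of the entries of $A$ and $B$ (hence of the diagonals of $A^n$ and $B^n$) is exactly what is used. In fact only the first half of hypothesis \eqref{eq:4.41} enters the argument; the equality ${\operatorname{Sp}}^{\times}(A)={\operatorname{Sp}}^{\times}(B)$ of the sets of nonzero eigenvalues plays no role here.
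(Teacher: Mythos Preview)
Your argument is correct and takes a genuinely different, more elementary route than the paper. The paper exploits the Perron--Frobenius structure of irreducible nonnegative matrices: it uses the hypothesis ${\operatorname{Sp}}^{\times}(A)={\operatorname{Sp}}^{\times}(B)$ to fix a common period $p$ and a common list $\lambda_1>|\lambda_2|\ge\cdots\ge|\lambda_L|$ of representatives of the $p$-cyclic eigenvalue orbits, and then argues by induction on the index $h$ that the multiplicities $m_h^A$ and $m_h^B$ agree, by comparing the exponent of $(t-\lambda_1\lambda_{h+1})$ on the two sides of the factored form of $\varphi_{A^t\otimes A}$ and $\varphi_{B^t\otimes B}$. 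By contrast, you bypass all of this spectral combinatorics with the single identity $\operatorname{Tr}((A^t\otimes A)^n)=(\operatorname{Tr}(A^n))^2$ and the observation that $\operatorname{Tr}(A^n)\ge 0$ for nonnegative $A$, which lets you extract $\operatorname{Tr}(A^n)=\operatorname{Tr}(B^n)$ for all $n$ and hence $\Spm(A)=\Spm(B)$ via power sums. Your proof is shorter, does not need irreducibility, and, as you note, uses only the multiset equality $\Spm(A^t\otimes A)=\Spm(B^t\otimes B)$; the paper's inductive bookkeeping, on the other hand, is what one is naturally led to if one tries to read the multiplicities directly off the tensor-square spectrum, and it makes explicit how the second half of \eqref{eq:4.41} and the Perron--Frobenius period enter (even if, in the end, they are not needed).
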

\begin{proof}
Since both $A, B$ are irreducible, they have its periods as irreducible matrices,
which we denote by $p_A, p_B,$ respectively.
Since 
${\operatorname{Sp}}^{\times}(A)
= {\operatorname{Sp}}^{\times}(B),
$
their Perron-Frobenius eigenvalues coincide.
We denote the common eigenvalue by $\lambda_1$ 
which is positive.
There are exactly $p_A$ eigenvalues $\lambda$ of $\Spm(A)$ such that 
$|\lambda| = \lambda_1,$
so that we have $p_A = p_B$ which we denote by $p$.
Let $\omega$ be the $p$
th root  $e^{2\pi\sqrt{-1}\frac{1}{p}}$ of unity.
By  Perron-Frbobenius theorem for irreducible matrices,
one may find distinct eigenvalues 
$\{\lambda_1,\lambda_2,\dots,\lambda_L \} \subset
{\operatorname{Sp}}^{\times}(A)
(= {\operatorname{Sp}}^{\times}(B))
$
such that 
\begin{equation}
\lambda_1 > |\lambda_2|\ge |\lambda_3|\ge \cdots \ge |\lambda_L| \label{eq:lambdaineq}
\end{equation}
and the set
$\{ \omega^k \lambda_i\mid k=0,1,\dots,p-1, \, i=1,\dots, L\}$
is the full list of ${\operatorname{Sp}}^{\times}(A)
(={\operatorname{Sp}}^{\times}(B))$
(cf. \cite[Section 1.4]{Seneta}).
For each $i=1,\dots,L,$
the $p$ eigenvalues 
$$
\omega^k \lambda_i, \qquad  k=0,1,\dots,p-1
$$
have common multiplicities in $\Spm(A)$ and in  $\Spm(B)$,
respectively, which we denote by $m_i^A$ and $m_i^B,$ 
respectively.
Hence we know that $m_1^A = m_1^B =1.$ 
We put 
$\lambda_i(k) =\omega^k \lambda_i
$
for 
$k=0,1,\dots,p-1,\,  i=1,\dots,L.
$
Let $m_0^A, m_0^B$ be the multiplicities of zero eigenvalues 
of $A, B,$ respectively.
Then the characteristic polynomials of the matrices
$A^t\otimes A, B^t\otimes B$ are written such that 
\begin{gather*}
\varphi_{A^t\otimes A}(t) 
= t^{(m_0^A)^2}\prod_{i,j=1}^L\prod_{k,l=0}^{p-1}
(t-\lambda_i(k) \lambda_j(l))^{m_i^A m_j^A}, \\
\varphi_{B^t\otimes B}(t) 
= t^{(m_0^B)^2}\prod_{i,j=1}^L\prod_{k,l=0}^{p-1}
(t-\lambda_i(k) \lambda_j(l))^{m_i^B m_j^B}.
\end{gather*}
By the assumption
$\Spm(A^t\otimes A) =\Spm(B^t\otimes B),$
 we have
\begin{equation}
\prod_{i,j=1}^L\prod_{k,l=0}^{p-1}
(t-\lambda_i(k) \lambda_j(l))^{m_i^A m_j^A}
=\prod_{i,j=1}^L\prod_{k,l=0}^{p-1}
(t-\lambda_i(k) \lambda_j(l))^{m_i^B m_j^B}. \label{eq:4.51}
\end{equation}
The above polynomial of the left (resp. right) hand side is denoted by 
$\phi_A(t)$ (resp. $\phi_B(t)$).
Suppose that 
\begin{equation*}
\lambda_1 \lambda_2 = \lambda_i(k) \lambda_j(l) \quad
\text{ for some } i, j=1,\dots, L  \text{ and }
k,l = 0,1,\dots,p-1.   
\end{equation*}
We may assume $i\le j.$
By the inequalities \eqref{eq:lambdaineq}
with
$|\lambda_i(k)| = |\lambda_i|, \, 
|\lambda_j(l)| = |\lambda_j|,
$
we have $ i=1,$
so that $\lambda_i(k) = \omega^k\lambda_1$.
Hence we have
$$
\lambda_2 = \omega^k \lambda_j(l) = \omega^{k+l}\lambda_j
$$
so that  $j =2$ and $k+l \equiv 0\,  (\operatorname{mod} p).$ 
We put $a = \lambda_1 \lambda_2.$
The power exponent of $(t- a)$ in the polynomial $\phi_A(t)$ is
\begin{equation*}
(m_1^A m_2^A +m_2^A m_1^A) \times 
|\{(k,l) \in \{0,1,\dots,p-1\}^2 \mid k+l \equiv 0\, (\operatorname{mod} p) \} |
=2 m_2^A p. 
\end{equation*} 
Similarly the power exponent of $(t- a)$ in the polynomial $\phi_B(t)$ 
is
$2 m_2^B p. $
Hence we have  
\begin{equation*}
m_2^A = m_2^B. 
\end{equation*}
Next assume that there exists $2\le h \le L$ such that 
\begin{equation}
m_n^A = m_n^B \quad \text{ for all } n \le h.  \label{eq:mnAB}
\end{equation}
Suppose that 
\begin{equation*}
\lambda_1 \lambda_{h+1} = \lambda_i(k) \lambda_j(l) \quad
\text{ for some } i, j=1,\dots, L  \text{ and } \,\,
k,l = 0,1,\dots,p-1.   
\end{equation*}
We may assume $i\le j.$
If $i=1$, 
then $\lambda_i(k) = \omega^k\lambda_1$.
Hence we have
$$
\lambda_{h+1} = \omega^k \lambda_j(l) = \omega^{k+l}\lambda_j
$$
so that  $j =h+1$ and $k+l \equiv 0\,  (\operatorname{mod} p).$ 
If $i \ne 1,$ we have $j < h+1$ because of the inequalities \eqref{eq:lambdaineq}.
We put
\begin{align*}
p_1(1, h+1) & =  \{ (i,j) \in \{2,\dots, L\}^2 \mid i <j, \, 
\lambda_1 \lambda_{h+1} = \lambda_i(k) \lambda_j(l) 
\text{ for some } k,l= 0,1,\dots, p-1  \},\\
p_0(1, h+1) & =  \{ i \in \{1,2,\dots, L\} \mid  
\lambda_1 \lambda_{h+1} = \lambda_i(k)^2 \text{ for some } k=0,1,\dots, p-1  \}.
\end{align*}
Both sets $p_1(1, h+1)$ and
$p_0(1, h+1)$ are possibly empty.
We note that 
$\lambda_i(k) \lambda_j(l) 
= \omega^{k+l} \lambda_i \lambda_j$
and
$
|\{ (k,l) \in \{1,\dots, p\}^2  \mid k+l \equiv 0\,  (\operatorname{mod} p) \} | =p.$
Put $ b = \lambda_1 \lambda_{h+1}.$ 
Hence the power exponent of $(t- b)$ in the polynomial $\phi_A(t)$ is
\begin{equation*}
2 m_1^A m_{h+1}^A p+
2( \sum_{(i,j) \in p_1(1,h+1)} m_i^A m_j^A)\cdot p 
+ \epsilon_p \sum_{i \in p_0(1,h+1)} m_i^A 
\end{equation*} 
where
$\epsilon_p =2$ if $ p$ is  even, 
and $\epsilon_p =1 $ if $ p $ is  odd. 
Similarly the power exponent of $(t- b)$ in the polynomial $\phi_B(t)$ is
\begin{equation*}
2 m_1^B m_{h+1}^B p +
2( \sum_{(i,j) \in p_1(1,h+1)} m_i^B m_j^B)\cdot p 
+ \epsilon_p \sum_{i \in p_0(1,h+1)} m_i^B 
\end{equation*} 
Any pair $(i,j) \in p_1(1,h+1)$ satisfies $i < j < h+1$
and
any element $i \in  p_0(1,h+1)$ satisfies $i < h+1.$
Hence the hypothesis \eqref{eq:mnAB} ensures that   
\begin{equation*}
m_{h+1}^A = m_{h+1}^B. 
\end{equation*}
Therefore we obtain that
$
\Spm(A) = \Spm(B).
$ 
Since
$$
\det(1 - A)= \prod_{i=1}^L\prod_{k=0}^{p-1} (1 - \lambda_i(k)^{m_i^A}), \qquad
\det(1 - B)= \prod_{i=1}^L\prod_{k=0}^{p-1} (1 - \lambda_i(k)^{m_i^B}), 
$$
the equality $\det(1 - A)  = \det(1 - B)$ follows from $\Spm(A) = \Spm(B)$. 
\end{proof}

W. Parry--D. Sullivan in \cite{PS} proved that the determinant 
$\det(1-A)$ is invariant under flow equivalence of topological Markov shift $(\bar{X}_A, \bar{\sigma}_A).$
There is another crucial invariant of flow equivalence called the 
 Bowen--Franks group  written $\BF(A),$ 
which is defined by the abelian group 
$\Z^N/(1 -A)\Z^N$ 
for the $N\times N$ matrix $A$ with entries in $\{0,1\}$
(\cite{BF}).
J, Franks in \cite{Franks} proved that 
$\det(1-A)$  and $\BF(A)$ is a complete set of invariants of flow equivalence.
We note that the group $\BF(A)$ is isomorphic to the $K_0$-group
$K_0(\OA)$ of the Cuntz-Krieger algebra $\OA.$

We reach the following proposition.
\begin{proposition}\label{prop:4.5}
Assume that  $A$ and $B$ are irreducible, non-permutation matrices
with entries in $\{0,1\}$.
Suppose that 
there exists an isomorphism
$\Phi:\WRA\otimes\K \longrightarrow \WRB\otimes\K$
such that 
\begin{equation}
\Phi\circ(\gamma^A_{(r,s)}\otimes\id)
 = (\gamma^B_{(r,s)}\otimes\id)\circ\Phi, \quad
(r,s) \in \T^2. \label{eq:4.15}
\end{equation}  
Then the two-sided topological Markov shifts 
$(\bar{X}_B,\bar{\sigma}_B)$ and 
$(\bar{X}_A,\bar{\sigma}_A)$ are flow equivalent.
\end{proposition}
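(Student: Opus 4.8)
The plan is to distill from the hypothesis the two flow‑equivalence invariants of Franks --- the Bowen--Franks group and the integer $\det(1-A)$ --- and then to invoke the theorem of Franks \cite{Franks} (together with \cite{BF},\cite{PS}) that $\BF(A)$ and $\det(1-A)$ form a complete set of invariants of flow equivalence for irreducible non‑permutation matrices. The two ingredients are produced, respectively, from the $\{0,1\}$‑matrix data of Lemma 4.3(ii) via Lemmas \ref{lem:4.4}--\ref{lem:4.5}, and from the equivariant $K$‑theoretic isomorphism of Lemma 4.3(i) via a coinvariant computation.

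First I would feed $\Phi$ into Lemma 4.3. Part (ii) yields an $N^2\times M^2$ matrix $H$ and an $M^2\times N^2$ matrix $K$ with entries in $\{0,1\}$, together with an $\ell\in\N$, satisfying \eqref{eq:4.2}, \eqref{eq:4.3} and \eqref{eq:4.4}. Plugging these into Lemma \ref{lem:4.4} gives $\Spm(A^t\otimes A)=\Spm(B^t\otimes B)$ and ${\operatorname{Sp}}^{\times}(A)={\operatorname{Sp}}^{\times}(B)$, and then Lemma \ref{lem:4.5} upgrades this to $\Spm(A)=\Spm(B)$, whence $\det(1-A)=\det(1-B)$ as integers (so in particular the signs agree). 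Part (i) of Lemma 4.3 simultaneously provides an isomorphism of (ordered) abelian groups
\[
\Phi_{0*}\colon K_0(\FTA)\otimes K_0(\FA)\longrightarrow K_0(\FTB)\otimes K_0(\FB), \qquad \Phi_{0*}\circ(\delta_{A^t}^m\otimes\delta_A^n)=(\delta_{B^t}^m\otimes\delta_B^n)\circ\Phi_{0*},\ (m,n)\in\Z^2.
\]

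The next step is to read off the Bowen--Franks groups from $\Phi_{0*}$ by passing to $\Z^2$‑coinvariants. Since the group of coinvariants of an abelian group under commuting automorphisms is functorial, $\Phi_{0*}$ descends to an isomorphism on coinvariants; and since the two $\Z^2$‑generators act on $K_0(\FTA)\otimes K_0(\FA)$ through the first factor (by $\delta_{A^t}$) and through the second (by $\delta_A$) respectively, right‑exactness of $\otimes$ identifies the coinvariants with
\[
\bigl(K_0(\FTA)/(\id-\delta_{A^t})K_0(\FTA)\bigr)\otimes\bigl(K_0(\FA)/(\id-\delta_A)K_0(\FA)\bigr)\cong K_0(\OTA)\otimes K_0(\OA),
\]
using the Pimsner--Voiculescu computation $K_0(\FA)/(\id-\delta_A)K_0(\FA)\cong K_0(\OA)$ and its analogue for $A^t$ (see \cite{CuntzInvent80},\cite{CK}); the same holds for $B$. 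As $K_0(\OA)\cong\BF(A)$ and $K_0(\OTA)\cong\BF(A)$, this yields $\BF(A)^{\otimes2}\cong\BF(B)^{\otimes2}$. It then remains to cancel the tensor square: a finitely generated abelian group is determined up to isomorphism by its tensor square. Indeed, reducing to the finite $p$‑primary pieces $G/p^nG\cong\bigoplus_i\Z/p^{a_i}$, the type of the tensor square is the multiset $\{\min(a_i,a_j)\}_{i,j}$; its largest entry equals $\max_i a_i$ and occurs with multiplicity the square of the number of $i$ with $a_i=\max_i a_i$, and each smaller exponent and its multiplicity are then pinned down inductively by a quadratic relation in the multiplicities already found. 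Hence $\BF(A)\cong\BF(B)$, and with $\det(1-A)=\det(1-B)$ in hand, Franks' theorem gives that $(\bar X_B,\bar\sigma_B)$ and $(\bar X_A,\bar\sigma_A)$ are flow equivalent.

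The main obstacle I anticipate is precisely this last cancellation, i.e.\ arguing that the only $K$‑theoretic data genuinely at our disposal --- the triple $(\tilde\Delta_A,\tilde\Delta_A^+,\tilde\delta_A)$ with its $\Z^2$‑action, but \emph{not} the distinguished element $\tilde u_A$, since $\Phi$ is merely a stable isomorphism and need not preserve the class $[E_A]$ --- still determines $\BF(A)$ up to isomorphism. One must also be careful that the coinvariant identification uses only right‑exactness of $\otimes$ and no flatness hypothesis (none is needed). Everything else is routine once the earlier lemmas are granted; should the tensor‑square cancellation be judged too slick, an alternative would be to argue directly from the $\{0,1\}$‑relations \eqref{eq:4.2}--\eqref{eq:4.4}, exploiting that $1\otimes A$ acts as the identity on $\BF(A)^{\oplus N}=\operatorname{coker}(I-1\otimes A)$, but I expect the route above to be the cleanest.
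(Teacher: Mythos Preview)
Your proof is correct. The determinant half follows the paper exactly, via Lemmas 4.3(ii), \ref{lem:4.4} and \ref{lem:4.5}. For the Bowen--Franks half the two arguments diverge. The paper does not use the equivariant isomorphism of Lemma 4.3(i) at all here: it simply notes that $\Phi$ already gives $K_0(\WRA)\cong K_0(\WRB)$, identifies $\WRA$ with a full corner of $\OTA\otimes\OA$ to obtain $K_0(\OTA\otimes\OA)\cong K_0(\OTB\otimes\OB)$, and then asserts $K_0(\OA)\cong K_0(\OB)$ ``by K\"unneth formulas.'' Your route instead exploits the $\Z^2$-equivariance, passes to coinvariants of $K_0(\FTA)\otimes K_0(\FA)$ (correctly using only right-exactness of $\otimes$ and Pimsner--Voiculescu), lands on $\BF(A)^{\otimes2}\cong\BF(B)^{\otimes2}$, and then supplies an explicit tensor-square cancellation for finitely generated abelian groups. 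This is in fact more honest: the paper's K\"unneth step, taken literally, produces only an isomorphism of a group built from copies of $\BF(A)^{\otimes2}$ and $\Tor(\BF(A),\BF(A))$, and extracting $\BF(A)$ from that still requires the cancellation you spell out. The trade-off is that you invoke the $\T^2$-equivariance of $\Phi$ twice (for $\det$ and for $\BF$), whereas the paper needs it only for the determinant; but you gain a cleaner intermediate target and close a detail the paper leaves implicit.
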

\begin{proof}
Suppose that 
there exists an isomorphism
$\Phi:\WRA\otimes\K \longrightarrow \WRB\otimes\K$
satisfying \eqref{eq:4.15}.
We then have 
$K_0(\WRA) = K_0(\WRB)$ so that 
$K_0(\OTA\otimes\OA)\cong
K_0(\OTB\otimes\OB)
$
and hence
$K_0(\OA) \cong K_0(\OB)$
by
K\"{u}nneth formulas.
This implies  that 
$\BF(A)$ is isomorphic to $\BF(B)$.
By the previous lemma, we have 
$\det(1-A) = \det(1-B)$.
Hence we conclude that 
$(\bar{X}_B,\bar{\sigma}_B)$ and 
$(\bar{X}_A,\bar{\sigma}_A)$ are flow equivalent
by Franks's theorem \cite{Franks}.
  \end{proof}

We will use Proposition \ref{prop:4.5} to prove Theorem \ref{thm:6.6} in Section 6.


\medskip

\section{Gauge actions with potentials}
In this section, we will define gauge actions $\gamma^{A,f}$ 
with potential function $f:\bar{X}_A\longrightarrow \Z$ 
on the $C^*$-algebra $\WRA$. 
For a continuous function $f \in C(\bar{X}_A,\Z)$ on $\bar{X}_A$
and $n \in \Z,$ we define a continuous function 
$f^n \in C(\bar{X}_A,\Z)$ by setting
\begin{equation*}
f^n(x)  = 
\begin{cases}
\sum_{i=0}^{n-1}f(\bar{\sigma}_A^i(x)) & \text{ for } n\ge 1, \\
0 & \text{ for } n=0, \\
- \sum_{i=n}^{-1}f(\bar{\sigma}_A^i(x)) & \text{ for } n\le -1.
 \end{cases}
\end{equation*}
It is easy to see that the identities
\begin{equation*}
f^{n+m}(x) = f^n(x) + f^m(\bar{\sigma}_A^n(x)), \qquad n,m\in \Z, \, x \in \bar{X}_A
\end{equation*}
hold.
For $f \in C(\bar{X}_A,\Z)$ 
and
$(x,p,q,y) \in G_A^{s,u}\rtimes\Z^2,$
define
\begin{align*}
\tilde{f}^+(x,p,q,y) 
& = \lim_{n\to\infty}\{ f^{n+p}(\bar{\sigma}_A(x)) -f^{n}(\bar{\sigma}_A(y))\}, \\
\tilde{f}^-(x,p,q,y) 
& = \lim_{n\to{-\infty}}\{ f^{n+q}(x) -f^{n}(y)\}. 
 \end{align*}
\begin{lemma}
Both $\tilde{f}^+, \tilde{f}^-: G_A^{s,u}\rtimes\Z^2\longrightarrow \Z$
are continuous groupoid homomorphisms from
$G_A^{s,u}\rtimes\Z^2$ to $\Z.$
\end{lemma}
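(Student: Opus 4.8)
The plan rests on the elementary observation that, being a continuous function into the discrete group $\Z$ on the compact space $\bar{X}_A$, the map $f$ is locally constant: there is an integer $N_f\ge 0$ such that $f(x)=f(x')$ whenever $x_i=x'_i$ for all $|i|\le N_f$. Every assertion of the lemma will follow from this fact together with the defining tail conditions of $G_A^s$ and $G_A^u$. The verification splits into three steps: existence of the limits defining $\tilde{f}^{\pm}$, their continuity, and the cocycle identity.

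\emph{Existence of the limits.} Fix $\gamma=(x,p,q,y)\in G_A^{s,u}\rtimes\Z^2$. Using $f^m(z)=\sum_{i=0}^{m-1}f(\bar{\sigma}_A^i(z))$ for $m\ge 1$ together with a routine reindexing of the summation index, one rewrites, for all large $n$,
\[
f^{n+p}(\bar{\sigma}_A(x)) - f^n(\bar{\sigma}_A(y)) = c^+(\gamma) + \sum_{j=1}^{n}\bigl(f(\bar{\sigma}_A^{j+p}(x)) - f(\bar{\sigma}_A^{j}(y))\bigr),
\]
where $c^+(\gamma)$ is an integer independent of $n$. Since $(\bar{\sigma}_A^p(x),y)\in G_A^s$, we have $d(\bar{\sigma}_A^{j+p}(x),\bar{\sigma}_A^{j}(y))\to 0$ as $j\to\infty$, so for $j$ large these two points agree on coordinates $[-N_f,N_f]$ and the $j$-th summand vanishes. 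Hence the partial sums are eventually constant, and $\tilde{f}^+(\gamma)$ exists and equals a finite sum. The argument for $\tilde{f}^-(\gamma)$ is identical, now using $f^n(z)=-\sum_{i=n}^{-1}f(\bar{\sigma}_A^i(z))$ for $n\le -1$, the limit $n\to-\infty$, and the unstable condition $(\bar{\sigma}_A^q(x),y)\in G_A^u$.

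\emph{Continuity.} From the previous step, $\tilde{f}^+(\gamma)$ equals $c^+(\gamma)+\sum_{j=1}^{n_0}(f(\bar{\sigma}_A^{j+p}(x))-f(\bar{\sigma}_A^{j}(y)))$ for any $n_0$ past the stabilization index, and this number depends only on $p$, on finitely many coordinates of $x$ and of $y$, and on the coordinate from which the stable agreement of $\bar{\sigma}_A^p(x)$ and $y$ is in force. On a basic compact open bisection of $G_A^{s,u}\rtimes\Z^2$ — one on which $p$ and $q$ are constant, a finite coordinate window of $x$ and of $y$ is prescribed, and the stable and unstable agreements are guaranteed beyond prescribed indices; such sets form a basis for the \'etale topology by Putnam's construction \cite{Putnam1} — all this data is fixed, so $\tilde{f}^+$ is constant there. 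Hence $\tilde{f}^+$ is locally constant, therefore continuous, and the same reasoning applies to $\tilde{f}^-$.

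\emph{The cocycle identity and the main obstacle.} Recall that $\gamma_1=(x,p,q,y)$ and $\gamma_2=(y,p',q',z)$ are composable with product $\gamma_1\gamma_2=(x,p+p',q+q',z)$ \cite{Putnam1}. Writing
\[
f^{n+p+p'}(\bar{\sigma}_A(x)) - f^n(\bar{\sigma}_A(z)) = \bigl(f^{(n+p')+p}(\bar{\sigma}_A(x)) - f^{n+p'}(\bar{\sigma}_A(y))\bigr) + \bigl(f^{n+p'}(\bar{\sigma}_A(y)) - f^n(\bar{\sigma}_A(z))\bigr)
\]
and letting $n\to\infty$, the second bracket tends to $\tilde{f}^+(\gamma_2)$ by definition, while in the first the substitution $m=n+p'$ (with $m\to\infty$) makes it tend to $\tilde{f}^+(\gamma_1)$; both limits exist by the first step. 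Hence $\tilde{f}^+(\gamma_1\gamma_2)=\tilde{f}^+(\gamma_1)+\tilde{f}^+(\gamma_2)$, and the same computation with $n\to-\infty$ and $q,q'$ in place of $p,p'$ gives the identity for $\tilde{f}^-$; in particular both maps vanish on the unit space $(G_A^{s,u}\rtimes\Z^2)^\circ$ and invert under $\gamma\mapsto\gamma^{-1}$, so they are continuous groupoid homomorphisms into $\Z$. The only genuinely delicate point is the continuity step: one must pin down precisely enough of the \'etale topology on $G_A^{s,u}\rtimes\Z^2$ to see that a basic neighbourhood of a point simultaneously fixes a finite coordinate window of $x$ and of $y$ and guarantees the stable and unstable tail agreements from prescribed coordinates onward, so that the finite expression for $\tilde{f}^{\pm}$ is locally constant.
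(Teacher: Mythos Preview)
Your proof is correct and follows essentially the same approach as the paper: both arguments use local constancy of $f$ to show the defining sequences are eventually constant (yielding existence and explicit finite formulas, from which continuity follows), and both verify the cocycle identity by decomposing $f^{n+p+p'}(\bar{\sigma}_A(x)) - f^n(\bar{\sigma}_A(z))$. Your add-and-subtract of $f^{n+p'}(\bar{\sigma}_A(y))$ is a slightly cleaner packaging of the same computation the paper carries out by expanding with the identity $f^{n+m}=f^n+f^m\circ\bar{\sigma}_A^n$, but the substance is identical.
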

\begin{proof}
Take an arbitrary point
$(x,p,q,y) \in G_A^{s,u}\rtimes\Z^2$
so that 
\begin{equation}
\lim_{n\to\infty} d(\bar{\sigma}_A^{n+p}(x),\bar{\sigma}_A^{n}(y)) =0,
\qquad
\lim_{n\to-\infty} d(\bar{\sigma}_A^{n+q}(x),\bar{\sigma}_A^{n}(y)) =0. \label{eq:5.0}
\end{equation}
By the first equality above,  we may find $N_1\in \N$ such that 
\begin{equation}
f(\bar{\sigma}_A^n(\bar{\sigma}_A^p(\bar{\sigma}_A(x)))) 
=f(\bar{\sigma}_A^n(\bar{\sigma}_A(y)))
\quad
\text{ for all } n \ge N_1. \label{eq:5.1}
\end{equation}
For $n\ge N_1,$ we have
\begin{align*}
 &f^{n+p}(\bar{\sigma}_A(x)) -f^{n}(\bar{\sigma}_A(y)) \\
=& f^{p}(\bar{\sigma}_A(x)) 
    +f^{n}(\bar{\sigma}_A^p(\bar{\sigma}_A(x))) -f^{n}(\bar{\sigma}_A(y)) \\
=& f^{p}(\bar{\sigma}_A(x)) 
   +f(\bar{\sigma}_A^p(\bar{\sigma}_A(x)))
   +f(\bar{\sigma}_A^{p+1}(\bar{\sigma}_A(x))) + \cdots 
   +f(\bar{\sigma}_A^{p+N_1 -1}(\bar{\sigma}_A(x))) \\
 & -f(\bar{\sigma}_A(y))-f(\bar{\sigma}_A^2(y))-\cdots 
    -f(\bar{\sigma}_A^{N_1-1}(\bar{\sigma}_A(y))) \\
= & f^{p}(\bar{\sigma}_A(x))  + f^{N_1}(\bar{\sigma}_A^p(\bar{\sigma}_A(x)))
     -f^{N_1}(\bar{\sigma}_A(y))
\end{align*}
so that 
\begin{equation}
\tilde{f}^+(x,p,q,y)
=  f^{p+N_1}(\bar{\sigma}_A(x))  
     -f^{N_1}(\bar{\sigma}_A(y)). \label{eq:5.2}
\end{equation}
By the second equality of \eqref{eq:5.0}, 
we may similarly find a negative integer $N_2 \in \Z$ such that 
\begin{equation}
\tilde{f}^-(x,p,q,y)
=  f^{q+N_2}(x)  
     -f^{N_2}(y). \label{eq:5.3}
\end{equation}
Hence both the values 
$\tilde{f}^+(x,p,q,y), \tilde{f}^-(x,p,q,y)$
are defined.

For $(x,p,q,y), (x', p',q',y') \in G_A^{s,u}\rtimes\Z^2$ with $x' =y$, we have
\begin{align*}
& \tilde{f}^+((x,p,q,y)(x', p',q',y')) \\
= & \tilde{f}^+(x,p + p', q+q',y') \\
= & \lim_{n\to\infty}\{ f^{n+p+p'}(\bar{\sigma}_A(x)) - f^{n}(\bar{\sigma}_A(y')) \} \\
= & \lim_{n\to\infty}\{ 
f^{n+p}(\bar{\sigma}_A(x)) +f^{p'}(\bar{\sigma}_A^n(\bar{\sigma}_A^p(\bar{\sigma}_A(x)))) 
- f^{n}(\bar{\sigma}_A(y')) \}.
\end{align*}
On the other hand, we have 
\begin{align*}
& \tilde{f}^+(x,p,q,y) + \tilde{f}^+(x', p',q',y')) \\
= &
 \lim_{n\to\infty}\{ f^{n+p}(\bar{\sigma}_A(x)) -f^{n}(\bar{\sigma}_A(y))\}
+ \lim_{n\to\infty}\{ f^{n+p'}(\bar{\sigma}_A(x')) -f^{n}(\bar{\sigma}_A(y'))\} \\
= &
 \lim_{n\to\infty}\{ 
f^{n+p}(\bar{\sigma}_A(x))  +f^{n+p'}(\bar{\sigma}_A(y))
-f^{n}(\bar{\sigma}_A(y))  -f^{n}(\bar{\sigma}_A(y'))\} \\
= & \lim_{n\to\infty}\{ 
f^{n+p}(\bar{\sigma}_A(x)) +f^{p'}(\bar{\sigma}_A^n(\bar{\sigma}_A(y))) 
- f^{n}(\bar{\sigma}_A(y')) \}.
\end{align*}
By \eqref{eq:5.1}, 
the equality
\begin{equation*}
\lim_{n\to\infty} f^{p'}(\bar{\sigma}_A^n(\bar{\sigma}_A^p(\bar{\sigma}_A(x)))) 
=
\lim_{n\to\infty}
f^{p'}(\bar{\sigma}_A^n(\bar{\sigma}_A(y)))
\end{equation*}
holds, so that 
\begin{equation*}
 \tilde{f}^+((x,p,q,y)(x', p',q',y')) 
=\tilde{f}^+(x,p,q,y) + \tilde{f}^+(x', p',q',y')
\end{equation*}
and similarly
\begin{equation*}
 \tilde{f}^-((x,p,q,y)(x', p',q',y')) 
=\tilde{f}^-(x,p,q,y) + \tilde{f}^-(x', p',q',y').
\end{equation*}
The identities
\begin{equation*}
 \tilde{f}^+((x,p,q,y)^{-1}) 
=-\tilde{f}^+(x,p,q,y),
\qquad 
\tilde{f}^-((x,p,q,y)^{-1}) 
=-\tilde{f}^-(x,p,q,y)
\end{equation*}
are easily seen.
As the continuity of 
$\tilde{f}^+, \tilde{f}^-$
follows from the formulas \eqref{eq:5.2}, \eqref{eq:5.3},
we know that they are 
 continuous groupoid homomorphisms from
$G_A^{s,u}\rtimes\Z^2$ to $\Z.$
\end{proof}
Define 
a continuous groupoid homomorphism
$\tilde{f}:
G_A^{s,u}\rtimes\Z^2\longrightarrow \Z$
by 
$\tilde{f}(x,p,q,y) =\tilde{f}^+(x,p,q,y) -\tilde{f}^-(x,p,q,y).$
Recall that the $C^*$-algebra $\WRA$
is represented on the Hilbert $C^*$-right module 
$\ell^2(G_A^{s,u}\rtimes\Z^2)$ over 
$C_0((G_A^{s,u}\rtimes\Z^2)^\circ) (= C(\bar{X}_A))$
as the reduced groupoid $C^*$-algebra. 
For $f \in C(\bar{X}_A,\Z),$
$(r,s) \in \T^2$ and $\xi \in \ell^2(G_A^{s,u}\rtimes\Z^2),$
we set
\begin{align*}
[U_s(\tilde{f}^+)\xi](x,p,q,y)
& = \exp\{ 2\pi\sqrt{-1}\tilde{f}^+(x,p,q,y)s\} \xi(x,p,q,y), \\
[U_r(\tilde{f}^-)\xi](x,p,q,y)
& = \exp\{ 2\pi\sqrt{-1}\tilde{f}^-(x,p,q,y)r\} \xi(x,p,q,y), \\
U_{(r,s)}(\tilde{f}) &= U_r(\tilde{f}^-)U_{s}(\tilde{f}^+).
\end{align*}
Since
$\tilde{f}^+, \tilde{f}^-$ and $\tilde{f}$
are groupoid homomorphisms
from $G_A^{s,u}\rtimes\Z^2$ to $\Z$,
the operators $U(\tilde{f}^+), U(\tilde{f}^-)$
yield unitary representations of $\T$ 
and $U(\tilde{f})$ does a unitary representation of $\T^2$.

\begin{proposition}
For $f \in C(\bar{X}_A,\Z)$, the correspondence
$a \in \WRA \longrightarrow 
\Ad(U_{(r,s)}(\tilde{f}))(a) ( = U_{(r,s)}(\tilde{f})aU_{(r,s)}(\tilde{f})^*) \in \WRA $
defines an automorphism on $\WRA$ such that 
$(r,s) \in \T^2 \longrightarrow \Ad(U_{(r,s)}) \in  \Aut(\WRA)$
 gives rise to an action of $\T^2$ on $\WRA$
and
its restriction to the subalgebra $C(\bar{X}_A)$ is identity.   
\end{proposition}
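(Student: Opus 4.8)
The plan is to verify the three claimed properties --- that $\Ad(U_{(r,s)}(\f))$ preserves $\WRA$, that the family $(r,s) \mapsto \Ad(U_{(r,s)}(\f))$ is an action of $\T^2$, and that it fixes $C(\bar X_A)$ pointwise --- by working at the level of $C_c(G_A^{s,u}\rtimes\Z^2)$ and using the explicit formulas for $\f^+$ and $\f^-$. The starting observation is that multiplication by the phase $\exp\{2\pi\sqrt{-1}\,\f^{\pm}(\gamma)s\}$ is, on $C_c(G_A^{s,u}\rtimes\Z^2)$, nothing but a cocycle twist: since $\f, \f^+, \f^-$ are continuous groupoid homomorphisms into $\Z$ (as established in the previous lemma), for each fixed $(r,s)$ the function $c_{(r,s)}(\gamma) := \exp\{2\pi\sqrt{-1}(\f^-(\gamma)r + \f^+(\gamma)s)\}$ is a continuous $\T$-valued $1$-cocycle on the groupoid, i.e.\ $c_{(r,s)}(\gamma\eta) = c_{(r,s)}(\gamma)c_{(r,s)}(\eta)$ whenever the product is defined.

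First I would record that $U_{(r,s)}(\f)$ is a well-defined unitary on $\ell^2(G_A^{s,u}\rtimes\Z^2)$ commuting with the right $C(\bar X_A)$-action --- this is immediate from the pointwise-phase definition --- and that $U_{(r,s)}(\f)U_{(r',s')}(\f) = U_{(r+r',s+s')}(\f)$ and $U_{(0,0)}(\f) = \id$, with strong continuity in $(r,s)$, which is exactly the statement that $U(\f)$ is a unitary representation of $\T^2$ already asserted before the proposition. Consequently $(r,s)\mapsto \Ad(U_{(r,s)}(\f))$ is automatically a homomorphism from $\T^2$ into the $*$-automorphism group of $\mathcal L(\ell^2(G_A^{s,u}\rtimes\Z^2))$, and it is point-norm continuous on the image of $C_c$, hence on $\WRA$ once we know $\WRA$ is invariant. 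So the only genuine content is invariance of $\WRA$.

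For invariance, I would compute $\Ad(U_{(r,s)}(\f))$ on a generator $f \in C_c(G_A^{s,u}\rtimes\Z^2)$ acting by left convolution. Using the cocycle identity for $c_{(r,s)}$ one checks that conjugating the convolution operator $\pi(f)$ by the diagonal phase operator $U_{(r,s)}(\f)$ produces exactly the convolution operator $\pi(c_{(r,s)}\cdot f)$, where $(c_{(r,s)}\cdot f)(\gamma) = c_{(r,s)}(\gamma)f(\gamma)$: indeed, for $\xi$ in the module, $[U_{(r,s)}(\f)\,(f*\xi)](\gamma) = c_{(r,s)}(\gamma)\sum_{\eta}f(\eta)\xi(\eta^{-1}\gamma)$, while inserting $c_{(r,s)}(\gamma) = c_{(r,s)}(\eta)c_{(r,s)}(\eta^{-1}\gamma)$ and absorbing the second factor into $U_{(r,s)}(\f)\xi$ gives $\sum_{\eta}(c_{(r,s)}\cdot f)(\eta)\,[U_{(r,s)}(\f)\xi](\eta^{-1}\gamma) = [(c_{(r,s)}\cdot f)*(U_{(r,s)}(\f)\xi)](\gamma)$. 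Since $c_{(r,s)}$ is continuous and $\T$-valued, $c_{(r,s)}\cdot f$ again lies in $C_c(G_A^{s,u}\rtimes\Z^2)$ with the same compact support, so $\Ad(U_{(r,s)}(\f))$ maps $C_c$ into $C_c$ and hence, by continuity and density, maps $\WRA$ onto $\WRA$; applying the same argument to $(-r,-s)$ gives the inverse. Finally, the restriction to $C(\bar X_A) = C_0((G_A^{s,u}\rtimes\Z^2)^\circ)$ is the identity because on the unit space every element $\gamma = (x,0,0,x)$ satisfies $\f^+(\gamma) = \f^-(\gamma) = 0$ by the very formulas $\f^+(x,0,0,x) = \lim_n\{f^{n}(\bar\sigma_A(x)) - f^n(\bar\sigma_A(x))\} = 0$ and likewise for $\f^-$, so $c_{(r,s)}$ is identically $1$ there and the phase twist acts trivially.

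The main obstacle, such as it is, is purely bookkeeping: one must be careful that the two phase operators $U_r(\f^-)$ and $U_s(\f^+)$ commute (they are both diagonal multiplication operators, so this is automatic) and that the cocycle identity is applied in the correct variable in the convolution formula --- the direction of the homomorphism property of $\f^{\pm}$ on composable pairs $(\gamma,\eta)$ with $r(\gamma) = r(\eta)$ must match the indexing in $(f*g)(\gamma) = \sum_{\eta}f(\eta)g(\eta^{-1}\gamma)$. There is no analytic difficulty: continuity of $\f^{\pm}$ (hence of $c_{(r,s)}$) was already proved, compact supports are preserved verbatim, and the passage from $C_c$ to the completion $\WRA$ is the standard density argument for groupoid $C^*$-algebras.
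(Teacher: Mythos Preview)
Your proof is correct and follows essentially the same route as the paper's: both arguments compute $\Ad(U_{(r,s)}(\tilde f))$ on $C_c(G_A^{s,u}\rtimes\Z^2)$ and show it acts as pointwise multiplication by the continuous $\T$-valued cocycle $c_{(r,s)}$, then observe this preserves $C_c$ and vanishes on units. Your presentation is slightly more explicit about the group-action and continuity properties (which the paper leaves implicit), but the mathematical content is the same.
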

\begin{proof}
For $a \in C_c(G_A^{s,u}\rtimes\Z^2),$
$\xi \in \ell^2(G_A^{s,u}\rtimes\Z^2),$
$(x,p,q,y) \in G_A^{s,u}\rtimes\Z^2$, we have
\begin{align*}
& [\Ad(U_{(r,s)}(\tilde{f}))(a)\xi](x,p,q,y) \\
= &\exp\{ 2\pi\sqrt{-1}( \tilde{f}^+(x,p,q,y)s + \tilde{f}^-(x,p,q,y)r) \} 
[a U_{(r,s)}(\tilde{f})^*\xi](x,p,q,y). 
\end{align*}
Now the equalities  
\begin{align*}
& [a U_{(r,s)}(\tilde{f})^*\xi](x,p,q,y) \\
= & \sum_{\gamma; r(\gamma) = x} 
a(\gamma)[ U_{(r,s)}(-\tilde{f})\xi](\gamma^{-1}\cdot (x,p,q,y)) \\
= & \sum_{\gamma; r(\gamma) = x} 
a(\gamma) \cdot \exp\{ 2 \pi \sqrt{-1}(\tilde{f}^+(\gamma)s +\tilde{f}^-(\gamma)r)\}\\
 & \cdot \exp\{ -2\pi\sqrt{-1}( \tilde{f}^+(x,p,q,y)s + \tilde{f}^-(x,p,q,y)r) \}
\xi(\gamma^{-1}\cdot (x,p,q,y))  
\end{align*}
hold, so that 
\begin{align*}
& [\Ad(U_{(r,s)}(\tilde{f}))(a)\xi](x,p,q,y) \\
= & \sum_{\gamma; r(\gamma) = x} 
a(\gamma) \cdot \exp\{ 2 \pi \sqrt{-1}(\tilde{f}^+(\gamma)s +\tilde{f}^-(\gamma)r)\}
\xi(\gamma^{-1}\cdot (x,p,q,y)). 
\end{align*}
Let us identify
$U_{(r,s)}(\tilde{f})$ with  the  continuous function on 
$G_A^{s,u}\rtimes\Z^2$
defined by
$$
U_{(r,s)}(\tilde{f})(\gamma)
 = \exp\{ 2 \pi \sqrt{-1}(\tilde{f}^+(\gamma)s +\tilde{f}^-(\gamma)r)\},
\qquad \gamma \in G_A^{s,u}\rtimes\Z^2.
$$
Hence we have 
\begin{align*}
 [\Ad(U_{(r,s)}(\tilde{f}))(a)\xi](x,p,q,y) 
= & \sum_{\gamma; r(\gamma) = x} 
(U_{(r,s)}(\tilde{f})\cdot a )(\gamma)
\xi(\gamma^{-1}\cdot (x,p,q,y)) \\
= & 
[(U_{(r,s)}(\tilde{f})\cdot a )\xi](x,p,q,y)
\end{align*}
so that 
\begin{equation}
\Ad(U_{(r,s)}(\tilde{f}))(a) = U_{(r,s)}(\tilde{f})\cdot a \qquad 
\text{ for } a \in C_c(G_A^{s,u}\rtimes\Z^2) \label{eq:5.4}
\end{equation}
where $U_{(r,s)}(\tilde{f})\cdot a $ is the pointwise product between the two functions
$U_{(r,s)}(\tilde{f})$ and $ a.$ 
Thus  
$\Ad(U_{(r,s)}(\tilde{f}))(a)$ belongs to the algebra 
$C_c(G_A^{s,u}\rtimes\Z^2),$
so that 
$\Ad(U_{(r,s)}(\tilde{f}))$ yields an automorphism of the $C^*$-algebra $\WRA.$

Especially for a continuous function
$a \in C(\bar{X}_A)$
on $\bar{X}_A$, it is regarded as an element of 
$C_c(G_A^{s,u}\rtimes\Z^2)$ by 
\begin{equation*}
a(x,p,q,y) = 
\begin{cases}
a(x) & \text{ if } x=y, \,  p=q=0, \\
0 & \text{ otherwise.}
\end{cases}
\end{equation*}
For $x=y, \, p=q=0$, we know that 
$\tilde{f}^+(x,p,q,y) =\tilde{f}^-(x,p,q,y) = 0$
so that  
\begin{equation*}
\Ad(U_{(r,s)}(\tilde{f}))(a) = U_{(r,s)}(\tilde{f}))\cdot a =a \qquad 
\text{ for } a \in C_c(\bar{X}_A).
\end{equation*}
\end{proof}
We denote by $\gamma^{A,f}_{(r,s)}$ 
the automorphism $\Ad(U_{(r,s)}(\tilde{f}))$ on $\WRA,$
which yields an action called gauge action with potential function $f$,
or weighted gauge action.
For the constant function $f \equiv 1$,
we have 
$\tilde{f}^+(x,p,q,y) = p, \, \tilde{f}^-(x,p,q,y) = q$
so that the action 
$\gamma^{A,f}_{(r,s)}$ for $f\equiv 1$
coincides with the previously defined  action 
$\gamma^{A}_{(r,s)}.$

Let $U_j(0), j=1,2,\dots,N$ be the cylinder sets on $\bar{X}_A$  such that 
$$  
U_j(0)  =\{ (x_n)_{n \in \Z} \in \bar{X}_A \mid x_0 = j \}.
$$
Let $\chi_{U_j(0)}$ be the characteristic function on $\bar{X}_A$ of the cylinder set
$U_j(0).$
\begin{lemma}\label{lem:5.3}
Suppose that 
$f = \sum_{j=1}^N f_j \chi_{U_j(0)}$ for some integers $f_j \in \Z$.
Then we have
\begin{equation*}
\gamma^{A,f}_{(r,s)} = \alpha^{A^t,f}_r \otimes \alpha^{A,f}_s
\qquad 
\text{ on }
\WRA = E_A(\OTA\otimes \OA)E_A,
\end{equation*}
where
$\alpha^{A^t,f}_r \in \Aut(\OTA),
\alpha^{A,f}_s \in \Aut(\OA)$ are defined by
\begin{align*}
\alpha^{A^t,f}_r(T_j) & = 
\exp\{2\pi\sqrt{-1}f_j r\} T_j, \qquad j=1,2,\dots, N, \\
\alpha^{A,f}_s(S_j) & = 
\exp\{2\pi\sqrt{-1}f_j s\} S_j, \qquad j=1,2,\dots, N.
\end{align*}
\end{lemma}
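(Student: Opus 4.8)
The plan is to compute the groupoid homomorphisms $\tilde{f}^+, \tilde{f}^-$ explicitly when $f$ depends only on the $0$th coordinate, and then match the resulting unitary $U_{(r,s)}(\tilde{f})$ against the generators of $\WRA = E_A(\OTA\otimes\OA)E_A$ coming from the identification in Lemma 2.3(iii). First I would note that when $f = \sum_j f_j\chi_{U_j(0)}$, the cocycle $f^n(x)$ has a combinatorial meaning: $f^n(x) = \sum_{i=0}^{n-1} f_{x_i}$ for $n\ge 1$, i.e.\ it sums the weights $f_j$ over the first $n$ symbols of $x$. Using formulas \eqref{eq:5.2} and \eqref{eq:5.3}, which reduce the (a priori infinite) limits to finite sums after the point $N_1$ (resp.\ $N_2$) where the two trajectories agree, I would show that for a generating element $T_{\bar\xi}T_{\bar\eta}^* \otimes S_\mu S_\nu^*$ with $\mu\in B_m, \nu\in B_n, \bar\xi\in B_k(\bar X_{A^t}), \bar\eta\in B_l(\bar X_{A^t})$, the value of $\tilde f^+$ on the corresponding groupoid element is $\sum_{a=1}^m f_{\mu_a} - \sum_{a=1}^n f_{\nu_a}$ (the difference of the $S$-side word weights), and the value of $\tilde f^-$ is $\sum_{a=1}^k f_{\xi_a} - \sum_{a=1}^l f_{\eta_a}$ (the difference of the $T$-side word weights). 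This is the heart of the computation; everything else is bookkeeping.

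Granting this, the unitary $U_{(r,s)}(\tilde f)$ acts on the generator $T_{\bar\xi}T_{\bar\eta}^*\otimes S_\mu S_\nu^*$ by multiplication by
\begin{equation*}
\exp\{2\pi\sqrt{-1}\,r(\textstyle\sum_a f_{\xi_a}-\sum_a f_{\eta_a})\}\cdot
\exp\{2\pi\sqrt{-1}\,s(\sum_a f_{\mu_a}-\sum_a f_{\nu_a})\}.
\end{equation*}
On the other hand, the definitions of $\alpha^{A^t,f}_r$ and $\alpha^{A,f}_s$ give immediately, by the multiplicativity $T_{\bar\xi} = T_{\xi_k}\cdots T_{\xi_1}$ and $S_\mu = S_{\mu_1}\cdots S_{\mu_m}$, that
\begin{equation*}
\alpha^{A^t,f}_r(T_{\bar\xi}T_{\bar\eta}^*) = \exp\{2\pi\sqrt{-1}\,r(\textstyle\sum_a f_{\xi_a}-\sum_a f_{\eta_a})\}\,T_{\bar\xi}T_{\bar\eta}^*,
\end{equation*}
and similarly for $\alpha^{A,f}_s$ on $S_\mu S_\nu^*$. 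Hence $(\alpha^{A^t,f}_r\otimes\alpha^{A,f}_s)$ and $\gamma^{A,f}_{(r,s)} = \Ad(U_{(r,s)}(\tilde f))$ agree on the linear span of the generators listed in Lemma 2.3(iii); I would also check that $\alpha^{A^t,f}_r\otimes\alpha^{A,f}_s$ fixes the projection $E_A = \sum_j T_j^*T_j\otimes S_jS_j^*$ (each summand is fixed since $T_j^*T_j$ and $S_jS_j^*$ are), so the tensor-product action genuinely restricts to $E_A(\OTA\otimes\OA)E_A$. Since the generators span a dense $*$-subalgebra and both maps are $*$-automorphisms, they coincide on all of $\WRA$; continuity in $(r,s)$ is clear from the formula.

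The main obstacle I anticipate is the first step: carefully verifying the explicit formula for $\tilde f^\pm$ on a groupoid element parametrised by a pair of words, i.e.\ translating the dynamical description in \eqref{eq:5.2}–\eqref{eq:5.3} into the symbolic/word description of $G_A^{s,u}\rtimes\Z^2$ used in Lemma 2.3. One must be attentive to the shift by $\bar\sigma_A$ in the definition of $\tilde f^+$ (the $f^{n+p}(\bar\sigma_A(x))$ rather than $f^{n+p}(x)$), to the indices $p,q$ recording the $\Z^2$-component of the groupoid element associated to $T_{\bar\xi}T_{\bar\eta}^*\otimes S_\mu S_\nu^*$ (namely $q = k-l$ on the $T$-side and $p = m-n$ on the $S$-side, up to the conventions fixed in Section 2), and to the cancellation of the ``tail'' contributions once the two trajectories coincide — this is exactly what \eqref{eq:5.1} guarantees for the coordinate-dependent $f$. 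Once the formula is pinned down, the identification with $\alpha^{A^t,f}_r\otimes\alpha^{A,f}_s$ is a routine comparison on generators.
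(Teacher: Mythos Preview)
Your proposal is correct and follows essentially the same route as the paper: compute $\tilde f^{\pm}$ explicitly on the groupoid elements underlying the generators $T_{\bar\xi}T_{\bar\eta}^*\otimes S_\mu S_\nu^*$, obtain the weighted word sums $\sum f_{\mu_a}-\sum f_{\nu_a}$ and $\sum f_{\xi_a}-\sum f_{\eta_a}$, and match against $\alpha^{A^t,f}_r\otimes\alpha^{A,f}_s$ on generators. The paper carries this out via the cylinder sets $U_{\xi\mu,\eta\nu}$ (with $p=m-n$, $q=l-k$; note your sign on $q$ is reversed) and the choice $N_1=n$ in \eqref{eq:5.2}, exactly the ``pinning down'' step you flag as the main obstacle.
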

\begin{proof}
For $m,n,k,l \in \N$ and
\begin{gather*}
 \mu =(\mu_1,\dots,\mu_m), \quad \nu =(\nu_1,\dots,\nu_n) \in B_*(\bar{X}_A),\\
 \bar{\xi} =(\xi_k,\dots,\xi_1), \quad \bar{\eta} =(\eta_l,\dots,\eta_1) \in B_*(\bar{X}_{A^t})
\end{gather*}
satisfying
$ 
 A(\xi_k,\mu_1) = A(\eta_l,\nu_1) =1,
$
we write
\begin{align*}
U_{\xi\mu,\eta\nu}=\{
&(x,m-n,l-k,y) \in G_A^{s,u}\rtimes\Z^2\mid 
(\bar{\sigma}_A^m(x), \bar{\sigma}_A^n(y)) \in G_A^{s,0}, 
(\bar{\sigma}_A^k(x), \bar{\sigma}_A^l(y)) \in G_A^{u,0}, \\
& x_{[1,m]} = \mu,  \, y_{[1,n]} = \nu, \,
   x_{[-k+1,0]} = \xi, \, y_{[-l+1,0]} = \eta \}
\end{align*}
where
\begin{gather*}
G_A^{s,0} = \{(x,y) \in \bar{X}_A\times\bar{X}_A \mid x_i = y_i \text{ for all } i \in \Zp\}, \\
G_A^{u,0} = \{(x,y) \in \bar{X}_A\times\bar{X}_A 
\mid x_{-i} = y_{-i} \text{ for all } i \in \Zp\}.
\end{gather*}
As in \cite[Section 9]{MaPre2017a}, 
the correspondence 
$
\chi_{U_{\xi\mu,\eta\nu}}
\longleftrightarrow 
T_{\bar{\xi}} T_\eta^*\otimes S_\mu S_\nu^*
$
gives rise to an isomorphism between
the groupoid $C^*$-algebra $C^*(G_A^{s,u}\rtimes\Z^2)$
and the algebra $\WRA$.
For $(x,p,q,y) \in U_{\xi\mu,\eta\nu}$
with $p = m-n, q= l-k$, one may take 
$N_1 = n$ so that
\begin{align*}
 \tilde{f}^+(x,p,q,y) 
= & 
f^{p}(\bar{\sigma}_A(x)) 
   +f(\bar{\sigma}_A^p(\bar{\sigma}_A(x)))
   +f(\bar{\sigma}_A^{p+1}(\bar{\sigma}_A(x))) +\cdots 
   +f(\bar{\sigma}_A^{p+N_1-1}(\bar{\sigma}_A(x))) \\
 & -f(\bar{\sigma}_A(y))-f(\bar{\sigma}_A^2(y))-\cdots -f(\bar{\sigma}_A^{N_1}(y)) \\
= & f(\bar{\sigma}_A(x))  + f(\bar{\sigma}_A(\bar{\sigma}_A(x)))+\cdots+
f(\bar{\sigma}_A^m(x)) \\
 &    -f(\bar{\sigma}_A(y))
-f(\bar{\sigma}_A^2(y))-\cdots
-f(\bar{\sigma}_A^n(y)) \\
=& (f_{\mu_1}  +f_{\mu_2}  +\cdots+f_{\mu_m} ) 
-( f_{\nu_1}  +f_{\nu_2}  +\cdots+f_{\nu_n})
\end{align*}
because ${\bar{\sigma}_A(x)}_{[0,m-1]} =\mu,$
${\bar{\sigma}_A(y)}_{[0,n-1]} =\nu,$
and similarly
\begin{align*}
 \tilde{f}^-(x,p,q,y) 
= & f(x) + f(\bar{\sigma}_A^{-1}(x))  + f(\bar{\sigma}_A^{-2}(x))+\cdots+
f(\bar{\sigma}_A^{-k+1}(x)) \\
 &    -f(y) - f(\bar{\sigma}_A^{-1}(y))
-f(\bar{\sigma}_A^{-2}(y)) -\cdots
-f(\bar{\sigma}_A^{-l+1}(y)) \\
=& (f_{\xi_1}  +f_{\xi_2}  +\cdots+f_{\xi_k})  
-( f_{\eta_1}  +f_{\eta_2}  +\cdots+f_{\eta_l}).
\end{align*}
It then follows that by \eqref{eq:5.4}
\begin{align*}
& [\Ad(U_{(r,s)}(\tilde{f}))(\chi_{U_{\xi\mu,\eta\nu}})](x,p,q,y) \\
= &[U_{(r,s)}(\tilde{f}))\cdot \chi_{U_{\xi\mu,\eta\nu}})](x,p,q,y) \\
= &\exp\{ 2\pi\sqrt{-1}( \tilde{f}^+(x,p,q,y)s + \tilde{f}^-(x,p,q,y)r) \} 
\chi_{U_{\xi\mu,\eta\nu}}(x,p,q,y)  
\end{align*}
so that 
\begin{align*}
 &\Ad(U_{(r,s)}(\tilde{f}))(\chi_{U_{\xi\mu,\eta\nu}}) \\
= &\exp[ 2\pi\sqrt{-1}
\{(f_{\mu_1}  +f_{\mu_2}  +\cdots+f_{\mu_m})  
-( f_{\nu_1}  +f_{\nu_2}  +\cdots+f_{\nu_n})\}s \\
 & +\{ (f_{\xi_1}  +f_{\xi_2}  +\cdots+f_{\xi_k})  
-( f_{\eta_1}  +f_{\eta_2}  +\cdots+f_{\eta_l})\}r]
\cdot 
\chi_{U_{\xi\mu,\eta\nu}},
\end{align*}
proving that the equality
\begin{equation*}
\gamma^{A,f}_{(r,s)}(\chi_{U_{\xi\mu,\eta\nu}})
 = \alpha^{A^t,f}_r(T_\xi T_\eta^*) \otimes \alpha^{A,f}_s(S_\mu S_\nu^*).
\end{equation*}
\end{proof}

\begin{proposition}\label{prop:5.4}
Let $\varphi:\bar{X}_A\longrightarrow \bar{X}_B$ 
be a topological conjugacy between two-sided topological Markov shifts
$(\bar{X}_A, \bar{\sigma}_A)$ and $(\bar{X}_B, \bar{\sigma}_B).$
Suppose that $f \in C(\bar{X}_A,\Z)$ and $g \in C(\bar{X}_B,\Z)$
satisfy $f = g \circ\varphi.$  
Then there exists an isomorphism $\Phi:\WRA\longrightarrow\WRB$
of $C^*$-algebras such that 
\begin{equation*}
\Phi(C(\bar{X}_A)) = C(\bar{X}_B),
\qquad
\Phi\circ\gamma^{A,f}_{(r,s)} = \gamma^{B,g}_{(r,s)}\circ\Phi,
\quad (r,s) \in \T^2.
\end{equation*}
\end{proposition}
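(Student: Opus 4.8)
The plan is to realize $\Phi$ as the isomorphism of groupoid $C^*$-algebras induced by a topological isomorphism of the underlying groupoids. Since $\varphi:\bar{X}_A\to\bar{X}_B$ is a topological conjugacy, it is a homeomorphism with $\varphi\circ\bar{\sigma}_A = \bar{\sigma}_B\circ\varphi$; in particular, as $\varphi$ and $\varphi^{-1}$ are uniformly continuous, one has $\lim_{n\to\pm\infty} d(\bar{\sigma}_A^n(x),\bar{\sigma}_A^n(y))=0$ if and only if $\lim_{n\to\pm\infty} d(\bar{\sigma}_B^n(\varphi(x)),\bar{\sigma}_B^n(\varphi(y)))=0$. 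Hence the map
$$
\tilde{\varphi}: G_A^{s,u}\rtimes\Z^2 \longrightarrow G_B^{s,u}\rtimes\Z^2, \qquad
\tilde{\varphi}(x,p,q,y) = (\varphi(x),p,q,\varphi(y))
$$
is a well-defined bijection which respects composition and inverses; since the topology of $G_A^{s,u}\rtimes\Z^2$ is generated by the basic sets $U_{\xi\mu,\eta\nu}$ inherited from $\bar{X}_A\times\bar{X}_A$ on each level $\{(p,q)\}\subset\Z^2$, the map $\tilde\varphi$ and its inverse are continuous, so $\tilde\varphi$ is an isomorphism of \'etale groupoids carrying $(G_A^{s,u}\rtimes\Z^2)^\circ\cong\bar{X}_A$ onto $(G_B^{s,u}\rtimes\Z^2)^\circ\cong\bar{X}_B$ via $\varphi$. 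By functoriality of the groupoid $C^*$-algebra construction (and amenability, so that full and reduced algebras agree), $\tilde\varphi$ induces a $*$-isomorphism $\Phi:\WRA\to\WRB$ determined on compactly supported functions by $(\Phi a)(\eta)=a(\tilde\varphi^{-1}(\eta))$, and $\Phi$ restricts on $C(\bar{X}_A)=C_0((G_A^{s,u}\rtimes\Z^2)^\circ)$ to the isomorphism onto $C(\bar{X}_B)$ given by composition with $\varphi^{-1}$; in particular $\Phi(C(\bar{X}_A))=C(\bar{X}_B)$.

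Next I would check that $\Phi$ intertwines the weighted gauge actions, and the key point is that the cocycles transported by $\tilde\varphi$ agree: $\tilde{f}^{\pm} = \tilde{g}^{\pm}\circ\tilde\varphi$ on $G_A^{s,u}\rtimes\Z^2$, hence $\tilde{f}=\tilde{g}\circ\tilde\varphi$. Indeed, from $f=g\circ\varphi$ and $\varphi\circ\bar{\sigma}_A=\bar{\sigma}_B\circ\varphi$ one obtains $f^n(x)=g^n(\varphi(x))$ for all $n\in\Z$ and $x\in\bar{X}_A$, and substituting this into the defining limits gives, for $(x,p,q,y)\in G_A^{s,u}\rtimes\Z^2$,
$$
\tilde{f}^{+}(x,p,q,y) = \lim_{n\to\infty}\{g^{n+p}(\bar{\sigma}_B(\varphi(x)))-g^{n}(\bar{\sigma}_B(\varphi(y)))\} = \tilde{g}^{+}(\varphi(x),p,q,\varphi(y)),
$$
and likewise $\tilde{f}^{-}(x,p,q,y)=\lim_{n\to-\infty}\{g^{n+q}(\varphi(x))-g^{n}(\varphi(y))\}=\tilde{g}^{-}(\varphi(x),p,q,\varphi(y))$. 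Consequently the unitary multipliers satisfy $U_{(r,s)}(\tilde{f})=U_{(r,s)}(\tilde{g})\circ\tilde\varphi$ as continuous functions on $G_A^{s,u}\rtimes\Z^2$.

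Finally, using the description $\gamma^{A,f}_{(r,s)}(a)=U_{(r,s)}(\tilde{f})\cdot a$ (pointwise product) for $a\in C_c(G_A^{s,u}\rtimes\Z^2)$ established in the proof of the previous proposition, and the analogous formula for $B$, one computes for such $a$ and $\eta\in G_B^{s,u}\rtimes\Z^2$
$$
(\Phi(\gamma^{A,f}_{(r,s)}(a)))(\eta) = (U_{(r,s)}(\tilde{f})\cdot a)(\tilde\varphi^{-1}(\eta)) = U_{(r,s)}(\tilde{g})(\eta)\cdot(\Phi a)(\eta) = (\gamma^{B,g}_{(r,s)}(\Phi a))(\eta),
$$
so $\Phi\circ\gamma^{A,f}_{(r,s)}=\gamma^{B,g}_{(r,s)}\circ\Phi$ on the dense $*$-subalgebra $C_c(G_A^{s,u}\rtimes\Z^2)$, hence on all of $\WRA$ by continuity. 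I expect the main obstacle to be the first step — verifying carefully that $\tilde\varphi$ is indeed an isomorphism of topological \'etale groupoids, i.e.\ that a conjugacy simultaneously preserves the stable, unstable and asymptotic structures together with their \'etale topologies — after which the identification $\tilde{f}=\tilde{g}\circ\tilde\varphi$ and the intertwining are routine translations of the defining formulas.
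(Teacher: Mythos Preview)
Your proof is correct and follows essentially the same route as the paper: both construct the groupoid isomorphism $\tilde\varphi(x,p,q,y)=(\varphi(x),p,q,\varphi(y))$, obtain the induced $C^*$-isomorphism $\Phi$ given on $C_c$ by $\Phi(a)=a\circ\tilde\varphi^{-1}$, verify $\tilde f^{\pm}=\tilde g^{\pm}\circ\tilde\varphi$ from $f=g\circ\varphi$, and deduce the intertwining of the weighted gauge actions. The only cosmetic difference is that the paper implements $\Phi$ via the explicit unitary $V_\varphi$ on the Hilbert module $\ell^2(G_B^{s,u}\rtimes\Z^2)$ and checks $V_\varphi U_{(r,s)}(\tilde g)V_\varphi^*=U_{(r,s)}(\tilde f)$, whereas you invoke functoriality and the pointwise-product formula \eqref{eq:5.4} directly; the computations are equivalent.
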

\begin{proof}
The topological conjugacy
$\varphi:\bar{X}_A\longrightarrow \bar{X}_B$ 
induces an isomorphism 
$\tilde{\varphi}: G_A^{s,u}\rtimes\Z^2\longrightarrow G_B^{s,u}\rtimes\Z^2$
of \'etale groupoids such that 
$\tilde{\varphi}(x,p,q,y) =(\varphi(x),p,q,\varphi(y))$
for $(x,p,q,y) \in G_A^{s,u}\rtimes\Z^2.$
It gives rise to a unitary
written 
$V_\varphi: \ell^2(G_B^{s,u}\rtimes\Z^2)
\longrightarrow \ell^2(G_A^{s,u}\rtimes\Z^2)$
satisfying 
$V_\varphi(\xi) = \xi\circ\tilde{\varphi}$ 
for $\xi \in \ell^2(G_B^{s,u}\rtimes\Z^2).$
Assume that the $C^*$-algebras 
$\WRA$ and $\WRB$ are represented on 
$\ell^2(G_A^{s,u}\rtimes\Z^2)$ and $\ell^2(G_A^{s,u}\rtimes\Z^2)$
as reduced groupoid $C^*$-algebras.
Since 
$V_\varphi^* a V_\varphi = a \circ\tilde{\varphi}^{-1} \in C_c(G_B^{s,u}\rtimes\Z^2)
$
for 
$a \in C_c(G_A^{s,u}\rtimes\Z^2),$
we know that $\Phi(a) = V_\varphi^* a V_\varphi, a \in \WRA$
gives rise to an isomorphism $\WRA\longrightarrow\WRB$
of $C^*$-algebras.
Since $\varphi:\bar{X}_A\longrightarrow \bar{X}_B$
is a topological conjugacy,
we know that 
\begin{equation*}
\tilde{g}^+(\varphi(x),p,q,\varphi(y))= \tilde{f}^+(x,p,q,y),
\qquad
\tilde{g}^-(\varphi(x),p,q,\varphi(y))= \tilde{f}^-(x,p,q,y)
\end{equation*} 
for $(x,p,q,y) \in G_A^{s,u}\rtimes\Z^2$
so that  for $\xi\in \ell^2(G_A^{s,u}\rtimes\Z^2),
(x,p,q,y) \in G_A^{s,u}\rtimes\Z^2,$
we have
\begin{align*}
& [V_\varphi U_{(r,s)}(\tilde{g}) V_\varphi^*\xi](x,p,q,y) \\
= & [U_{(r,s)}(\tilde{g}) V_\varphi^*\xi](\varphi(x),p,q,\varphi(y)) \\
=& \exp
\{ 2\pi\sqrt{-1}( \tilde{g}^+(\varphi(x),p,q,\varphi(y)) s + \tilde{g}^-(\varphi(x),p,q,\varphi(y))r) \} 
[V_\varphi^*\xi](\varphi(x),p,q,\varphi(y)) \\
=& \exp\{ 2\pi\sqrt{-1}( \tilde{f}^+(x,p,q,y) s + \tilde{f}^-(x,p,q,y)r) \}
\xi(x,p,q,y) \\
=& [U_{(r,s)}(\tilde{f})\xi](x,p,q,y). 
\end{align*}
Hence we have
$
V_\varphi U_{(r,s)}(\tilde{g}) V_\varphi^* = U_{(r,s)}(\tilde{f}),
$
so that the equality
$
\Phi\circ\gamma^{A,f}_{(r,s)} = \gamma^{B,g}_{(r,s)}\circ\Phi
$
holds.
Since 
$a \circ\tilde{\varphi}^{-1} \in C_c({(G_B^{s,u}\rtimes\Z^2)}^\circ))
$
for
$a\in C_c({(G_A^{s,u}\rtimes\Z^2)}^\circ)),$
the equality
$\Phi(C(\bar{X}_A)) = C(\bar{X}_B)
$ is obvious
(cf.  \cite[Theorem 1.1]{MaPre2017b}).
\end{proof}

\begin{corollary}\label{cor:5.5}
Let $B$ be an $M\times M$ irreducible non-permutation matrix with 
entries in $\{0,1\}$.
For any continuous function $g \in C(\bar{X}_B,\Z)$ on $\bar{X}_B,$
there exist an $N\times N$ irreducible non-permutation matrix $A$ with 
entries in $\{0,1\}$
and a continuous function    
$f = \sum_{j=1}^N f_j \chi_{U_j(0)}$ for some integers $f_j \in \Z$
such that 
$(\bar{X}_A, \bar{\sigma}_A)$ is topologically conjugate to
$(\bar{X}_B, \bar{\sigma}_B)$
and there exists an isomorphism
$\Phi: \WRA\longrightarrow \WRB$ such that 
\begin{equation*}
\Phi(C(\bar{X}_A)) = C(\bar{X}_B),
\qquad
\Phi\circ\gamma^{A,f}_{(r,s)} = \gamma^{B,g}_{(r,s)}\circ\Phi,
\quad (r,s) \in \T^2.
\end{equation*}
\end{corollary}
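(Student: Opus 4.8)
The plan is to reduce the statement to Proposition~\ref{prop:5.4} by recoding $(\bar{X}_B,\bar{\sigma}_B)$ to a higher block presentation in which the given function $g$ depends only on the $0$th coordinate. Since $\bar{X}_B$ is compact and totally disconnected while $g$ is continuous, $g$ is locally constant, so there is an integer $k\ge 0$ such that $g(x)$ depends only on the coordinates $x_{-k},x_{-k+1},\dots,x_{k}$ of $x=(x_n)_{n\in\Z}\in\bar{X}_B$.

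First I would pass to the $(2k+1)$-block presentation of $\bar{X}_B$. Let $N$ be the number of admissible words of $\bar{X}_B$ of length $2k+1$, enumerate them as $w^{(1)},\dots,w^{(N)}$, and let $A=[A(i,j)]_{i,j=1}^N$ be the $\{0,1\}$-matrix with $A(i,j)=1$ exactly when the last $2k$ symbols of $w^{(i)}$ coincide with the first $2k$ symbols of $w^{(j)}$ and their overlapping concatenation is an admissible word of $\bar{X}_B$ of length $2k+2$. Then the sliding block code $\psi:\bar{X}_B\longrightarrow\bar{X}_A$, $\psi(x)_n=(x_{n-k},\dots,x_{n+k})$, is a topological conjugacy; write $\varphi=\psi^{-1}:\bar{X}_A\longrightarrow\bar{X}_B$. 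Standard facts (\cite{LM}) give that $A$ is irreducible since $B$ is, and that $A$ is a non-permutation matrix, since $(\bar{X}_A,\bar{\sigma}_A)\cong(\bar{X}_B,\bar{\sigma}_B)$ is an infinite subshift and an irreducible $\{0,1\}$-matrix whose shift space is infinite cannot be a permutation matrix. For the $j$th symbol $w^{(j)}$, set $f_j:=g(x)$ for any $x\in\bar{X}_B$ with $x_{[-k,k]}=w^{(j)}$, which is well defined by the choice of $k$; put $f:=\sum_{j=1}^N f_j\chi_{U_j(0)}\in C(\bar{X}_A,\Z)$. Then for $x\in\bar{X}_B$ we have $\psi(x)_0=w^{(j)}$ for the $j$ with $x_{[-k,k]}=w^{(j)}$, whence $f(\psi(x))=f_j=g(x)$; equivalently $f=g\circ\varphi$ on $\bar{X}_A$.

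It then remains to apply Proposition~\ref{prop:5.4} to the topological conjugacy $\varphi:\bar{X}_A\longrightarrow\bar{X}_B$ and the functions $f\in C(\bar{X}_A,\Z)$, $g\in C(\bar{X}_B,\Z)$ satisfying $f=g\circ\varphi$: this produces an isomorphism $\Phi:\WRA\longrightarrow\WRB$ with $\Phi(C(\bar{X}_A))=C(\bar{X}_B)$ and $\Phi\circ\gamma^{A,f}_{(r,s)}=\gamma^{B,g}_{(r,s)}\circ\Phi$ for $(r,s)\in\T^2$, which is exactly the claim; moreover $f$ has the required form $\sum_j f_j\chi_{U_j(0)}$, so Lemma~\ref{lem:5.3} describes the resulting action. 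The only delicate point is the bookkeeping of the higher block code: the $(2k+1)$-window must be centred so that precisely the coordinates $-k,\dots,k$ of $\bar{X}_B$, on which $g$ lives, become the $0$th coordinate of $\bar{X}_A$, and one must verify that this recoding preserves irreducibility and the non-permutation property of the matrix. Both are routine, so I do not expect a genuine obstacle once the recoding is arranged correctly.
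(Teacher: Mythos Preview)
Your proof is correct and follows essentially the same route as the paper: recode $(\bar{X}_B,\bar{\sigma}_B)$ via a higher block presentation so that $g$ becomes a function of the $0$th coordinate, then invoke Proposition~\ref{prop:5.4}. The only cosmetic difference is that you explicitly centre the $(2k+1)$-window, whereas the paper takes the standard $K$-higher block code and absorbs the alignment into the phrase ``and shifting $g$''; either way one obtains a conjugacy $\varphi$ with $f=g\circ\varphi$ depending only on the $0$th coordinate.
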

\begin{proof}
There exists $K \in \N$ such that 
$g  = \sum_{\mu \in B_K(\bar{X}_B)} g_\mu \chi_{U_\mu}$
for some $g_\mu \in \Z$
where $U_\mu$ is the cylinder set of $\bar{X}_B$ for a word $\mu \in B_K(\bar{X}_B)$.
By taking $K$-higher block representation of  $\bar{X}_B$
and its $K$ higher block matrix of $B$ as $A$ (see \cite[1.4]{LM}), 
and shifting $g$, 
one may have a topological conjugacy
$\varphi: \bar{X}_A \longrightarrow \bar{X}_B$
and a continuous function 
$f = \sum_{j=1}^N f_j \chi_{U_j(0)}$ for some integers $f_j \in \Z$
such that 
$f = g \circ\varphi$.
Hence we get the desired assertion by Proposition \ref{prop:5.4}.
\end{proof}

\section{Flow equivalence}
We fix an irreducible, non-permutation matrix $A.$
 Let $f: \bar{X}_A\longrightarrow \N$ 
be a continuous function  on $\bar{X}_A$ such that 
$f = \sum_{j=1}^N f_j \chi_{U_j(0)}$ for some integers $f_j \in \N$.
Put $m_j  =f_j -1$ for $j=1,\dots,N.$
Consider the new graph 
$\G_f =(\V_f,\E_f)$ with its transition matrix $A_f$  from the graph
$\G =(\V,\E)$ for the matrix $A$
defined in \eqref{eq:2.3} in Section 2.
The vertex set $\V_f$ is
$\cup_{j=1}^N \{ j_0, j_1, j_2,\dots, j_{m_j} \}$
which is denoted by $\tilde{\Sigma}$, and
if $A(j,k) =1,$ then
\begin{equation*}
A_f(j_0,j_1) =A_f(j_1,j_2) =\cdots =A_f(j_{mj-1},j_{m_j}) =A_f(j_{m_j}, k_0) =1.
\end{equation*}
Let us denote by 
\begin{equation*}
\tilde{S}_{j_0}, \tilde{S}_{j_1},\tilde{S}_{j_2},\dots, \tilde{S}_{j_{m_j}} 
\quad\text{ and }\quad
\tilde{T}_{j_0}, \tilde{T}_{j_1},\tilde{T}_{j_2},\dots, \tilde{T}_{j_{m_j}}
\end{equation*}
the canonical generating partial isometries of $\OAf$ and $\OTAf$ 
respectively which satisfy
\begin{gather}
\sum_{j=1}^N (\tilde{S}_{j_0}\tilde{S}_{j_0}^* +  \tilde{S}_{j_1}\tilde{S}_{j_1}^* +\cdots+
 \tilde{S}_{j_{m_j}} \tilde{S}_{j_{m_j}}^*) =1, \label{eq:4.S1}\\ 
\tilde{S}_{j_n}^*\tilde{S}_{j_n}= \tilde{S}_{j_{n+1}}\tilde{S}_{j_{n+1}}^*, \qquad n=0, 1,\dots,m_j-1,
\label{eq:4.S2}\\
\tilde{S}_{j_{m_j}}^* \tilde{S}_{j_{m_j}} = \sum_{k=1}^N A(j,k)\tilde{S}_{k_0} \tilde{S}_{k_0}^*, \qquad
j=1,2,\dots,N \label{eq:4.S3}
\end{gather}
and
\begin{gather}
\sum_{j=1}^N (\tilde{T}_{j_0}\tilde{T}_{j_0}^* +  \tilde{T}_{j_1}\tilde{T}_{j_1}^* +\cdots+
 \tilde{T}_{j_{m_j}} \tilde{T}_{j_{m_j}}^*) =1, \label{eq:4.T1}\\ 
\tilde{T}_{j_{n+1}}^*\tilde{T}_{j_{n+1}}= \tilde{T}_{j_n}\tilde{T}_{j_n}^*, \qquad n=0, 1,\dots,m_j-1,
\label{eq:4.T2}\\
\tilde{T}_{j_0}^* \tilde{T}_{j_0} 
= \sum_{k=1}^N A^t(j,k)\tilde{T}_{k_{m_k}} \tilde{T}_{k_{m_k}}^*, \qquad
j=1,2,\dots,N. \label{eq:4.T3}
\end{gather}
We set
\begin{equation*}
S_j= \tilde{S}_{j_0}\tilde{S}_{j_1}\tilde{S}_{j_2}\cdots \tilde{S}_{j_{m_j}} 
\quad\text{ and }\quad
T_j = \tilde{T}_{j_{m_j}}\tilde{T}_{j_{m_j-1}}\cdots \tilde{T}_{j_1} \tilde{T}_{j_0}
\quad\text{ for }\quad j=1,\dots,N.
\end{equation*}
Define the projections
\begin{equation*}
P_A = \sum_{j=1}^N \tilde{S}_{j_0} \tilde{S}_{j_0}^*\quad\text{ and }\quad
P_{A^t} = \sum_{j=1}^N \tilde{T}_{j_{m_j}}\tilde{T}_{j_{m_j}}^*.
\end{equation*}
We denote by 
$C^*(S_1, \dots, S_N)$ (resp.  $C^*(T_1, \dots, T_N))$
the $C^*$-subalgebra of $\OAf$ (resp. $\OTAf$) generated by
$ S_1, \dots, S_N$ (resp.  $T_1, \dots, T_N).$
\begin{lemma}Keep the above notation.  We have \hspace{6cm}
\begin{enumerate}
\renewcommand{\theenumi}{\roman{enumi}}
\renewcommand{\labelenumi}{\textup{(\theenumi)}}
\item
\begin{equation*}
 \sum_{j=1}^N S_j S_j^* =P_A, 
\qquad
S_j^* S_j = \sum_{k=1}^N A(j,k)S_k S_k^* 
\quad\text{ for }\quad j=1,\dots,N,
\end{equation*}
and the $C^*$-algebra  
$
P_A \OAf P_A 
$
coincides with 
$C^*(S_1, \dots, S_N)$ that is  isomorphic to $\OA$.
\item
\begin{equation*}
 \sum_{j=1}^N T_j T_j^*  = P_{A^t}, 
\qquad
T_j^* T_j = \sum_{k=1}^N A^t(j,k)T_k T_k^*
\quad\text{ for }\quad j=1,\dots,N,
\end{equation*}
and the $C^*$-algebra  
$
P_{A^t}\OTAf P_{A^t}
$
coincides with
$C^*(T_1, \dots, T_N)$ that is isomorphic to $\OTA.$
\end{enumerate}
\end{lemma}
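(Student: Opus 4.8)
The statement is a direct computation with the Cuntz--Krieger relations. I treat part (i); part (ii) is the mirror image, with the roles of source and range projections exchanged and words read from $j_{m_j}$ down to $j_0$. First I would telescope: in $S_jS_j^*=\tilde{S}_{j_0}\tilde{S}_{j_1}\cdots\tilde{S}_{j_{m_j}}\tilde{S}_{j_{m_j}}^*\cdots\tilde{S}_{j_1}^*\tilde{S}_{j_0}^*$, applying \eqref{eq:4.S2} in the form $\tilde{S}_{j_n}^*\tilde{S}_{j_n}=\tilde{S}_{j_{n+1}}\tilde{S}_{j_{n+1}}^*$ from the innermost pair outward collapses each $\tilde{S}_{j_n}(\,\cdot\,)\tilde{S}_{j_n}^*$ to $\tilde{S}_{j_n}\tilde{S}_{j_n}^*$, so $S_jS_j^*=\tilde{S}_{j_0}\tilde{S}_{j_0}^*$ and hence $\sum_j S_jS_j^*=P_A$. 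Dually, $S_j^*S_j=\tilde{S}_{j_{m_j}}^*\cdots\tilde{S}_{j_0}^*\tilde{S}_{j_0}\cdots\tilde{S}_{j_{m_j}}$ collapses, again by \eqref{eq:4.S2}, to $\tilde{S}_{j_{m_j}}^*\tilde{S}_{j_{m_j}}$, which \eqref{eq:4.S3} rewrites as $\sum_k A(j,k)\tilde{S}_{k_0}\tilde{S}_{k_0}^*=\sum_k A(j,k)S_kS_k^*$. Thus $S_1,\dots,S_N$ are nonzero partial isometries --- nonzero since $S_jS_j^*=\tilde{S}_{j_0}\tilde{S}_{j_0}^*\neq 0$ --- with pairwise orthogonal ranges satisfying the Cuntz--Krieger relations for $A$ with common unit $P_A$.

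Because $A$ is irreducible and not a permutation matrix it satisfies Cuntz--Krieger's condition (I), so the uniqueness theorem of \cite{CK} yields an isomorphism $\OA\xrightarrow{\cong}C^*(S_1,\dots,S_N)$ carrying the canonical generators to the $S_j$ and $1$ to $P_A$; in particular $P_A$ is the unit of $C^*(S_1,\dots,S_N)$, which gives the inclusion $C^*(S_1,\dots,S_N)\subseteq P_A\OAf P_A$. For the reverse inclusion, recall that $\OAf$ is the closed linear span of the monomials $\tilde{S}_\alpha\tilde{S}_\beta^*$ over admissible words $\alpha,\beta$ of the graph $\G_f$, so it is enough to show each $P_A\tilde{S}_\alpha\tilde{S}_\beta^*P_A$ lies in $C^*(S_1,\dots,S_N)$. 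Since $P_A=\sum_j\tilde{S}_{j_0}\tilde{S}_{j_0}^*$ and the projections $\tilde{S}_v\tilde{S}_v^*$ are mutually orthogonal, $P_A\tilde{S}_\alpha=\tilde{S}_\alpha$ when $\alpha$ begins at a base vertex $k_0$ and $P_A\tilde{S}_\alpha=0$ otherwise, so we may assume $\alpha$ and $\beta$ both begin at base vertices. By the shape \eqref{eq:2.3} of $A_f$, such a word runs consecutively through the blocks $\{k_0,\dots,k_{m_k}\}$, each fully except possibly the last, so $\tilde{S}_\alpha=S_{\mathbf a}\,\tilde{S}_{c_0}\cdots\tilde{S}_{c_p}$ with $S_{\mathbf a}$ a (possibly empty) product of $S_j$'s and $0\le p\le m_c$, and similarly $\tilde{S}_\beta=S_{\mathbf b}\,\tilde{S}_{d_0}\cdots\tilde{S}_{d_q}$. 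A short case analysis on whether the last blocks are fully traversed --- computing the source projection $\tilde{S}_{c_p}^*\tilde{S}_{c_p}$ from \eqref{eq:4.S2}--\eqref{eq:4.S3} and using $S_kS_k^*=\tilde{S}_{k_0}\tilde{S}_{k_0}^*$ --- shows that $\tilde{S}_\alpha\tilde{S}_\beta^*$ vanishes unless the partial last blocks coincide, in which case it telescopes to $S_{\mathbf a}\,\tilde{S}_{c_0}\tilde{S}_{c_0}^*\,S_{\mathbf b}^*=S_{\mathbf a}S_cS_c^*S_{\mathbf b}^*\in C^*(S_1,\dots,S_N)$. Passing to closed linear spans gives $P_A\OAf P_A\subseteq C^*(S_1,\dots,S_N)$, proving (i), and (ii) follows verbatim with \eqref{eq:4.T1}--\eqref{eq:4.T3}, $P_{A^t}=\sum_j\tilde{T}_{j_{m_j}}\tilde{T}_{j_{m_j}}^*$, and $T_j=\tilde{T}_{j_{m_j}}\cdots\tilde{T}_{j_0}$.

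The routine telescoping of the first paragraph and the appeal to the Cuntz--Krieger uniqueness theorem are harmless; the one point needing care is the bookkeeping in the second paragraph --- making sure $P_A$ kills every monomial not fully supported on the base-vertex decomposition and that the surviving ones collapse back into words in the $S_j$ --- which is where I would spend the bulk of the write-up.
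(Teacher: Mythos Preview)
Your argument is correct and follows essentially the same route as the paper: telescope with \eqref{eq:4.S2}--\eqref{eq:4.S3} to get $S_jS_j^*=\tilde S_{j_0}\tilde S_{j_0}^*$ and $S_j^*S_j=\tilde S_{j_{m_j}}^*\tilde S_{j_{m_j}}$, then for the corner equality reduce $P_A\tilde S_{\tilde\mu}\tilde S_{\tilde\nu}^*P_A$ to words starting at base vertices, factor off full blocks as products of the $S_j$, and show the remaining partial tails must coincide (or the product vanishes) and collapse to $S_cS_c^*$. The only cosmetic difference is that the paper extends the partial tail outward to $S_jS_j^*$ whereas you collapse inward to $\tilde S_{c_0}\tilde S_{c_0}^*$, and you make the appeal to Cuntz--Krieger uniqueness explicit where the paper leaves it implicit.
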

\begin{proof}
We will prove (i). 
By \eqref{eq:4.S2},
we have the following equalities
\begin{align*}
S_j^* S_j 
=& ( \tilde{S}_{j_0}\tilde{S}_{j_1}\tilde{S}_{j_2}\cdots \tilde{S}_{j_{m_j}})^*
    ( \tilde{S}_{j_0}\tilde{S}_{j_1}\tilde{S}_{j_2}\cdots \tilde{S}_{j_{m_j}}) \\
=& \tilde{S}_{j_{m_j}}^* \tilde{S}_{j_{m_j-1}}^* \cdots \tilde{S}_{j_1}^* \tilde{S}_{j_0}^*
     \tilde{S}_{j_0}\tilde{S}_{j_1}\cdots \tilde{S}_{j_{m_j-1}}\tilde{S}_{j_{m_j}} \\
=& \tilde{S}_{j_{m_j}}^* \tilde{S}_{j_{m_j-1}}^* \cdots \tilde{S}_{j_1}^* \tilde{S}_{j_1}
     \tilde{S}_{j_1}^*\tilde{S}_{j_1}\cdots \tilde{S}_{j_{m_j-1}}\tilde{S}_{j_{m_j}} \\
=& \tilde{S}_{j_{m_j}}^* \tilde{S}_{j_{m_j-1}}^* \cdots \tilde{S}_{j_2}^* \tilde{S}_{j_1}^*
     \tilde{S}_{j_1}\tilde{S}_{j_2}\cdots \tilde{S}_{j_{m_j-1}}\tilde{S}_{j_{m_j}} \\
=& \tilde{S}_{j_{m_j}}^* \tilde{S}_{j_{m_j-1}}^* \cdots \tilde{S}_{j_2}^* 
     \tilde{S}_{j_2}\cdots \tilde{S}_{j_{m_j-1}}\tilde{S}_{j_{m_j}}. 
\end{align*}
By continuing this procedure, the last term above goes to 
$\tilde{S}_{j_{m_j}}^*\tilde{S}_{j_{m_j}}$ 
so that 
$S_j^* S_j  =\tilde{S}_{j_{m_j}}^*\tilde{S}_{j_{m_j}}.$
We also have 
\begin{align*}
 S_j S_j^* 
=&( \tilde{S}_{j_0}\tilde{S}_{j_1}\tilde{S}_{j_2}\cdots \tilde{S}_{j_{m_j}})
    ( \tilde{S}_{j_0}\tilde{S}_{j_1}\tilde{S}_{j_2}\cdots \tilde{S}_{j_{m_j}})^*
     \\
=& \tilde{S}_{j_0}\tilde{S}_{j_1}\cdots \tilde{S}_{j_{m_j-1}}\tilde{S}_{j_{m_j}}
     \tilde{S}_{j_{m_j}}^* \tilde{S}_{j_{m_j-1}}^* \cdots \tilde{S}_{j_1}^* \tilde{S}_{j_0}^*
      \\
=& \tilde{S}_{j_0}\tilde{S}_{j_1}\cdots \tilde{S}_{j_{m_j-1}}\tilde{S}_{j_{m_j -1}}^*
     \tilde{S}_{j_{m_j-1}} \tilde{S}_{j_{m_j-1}}^* \cdots \tilde{S}_{j_1}^* \tilde{S}_{j_0}^*
      \\
=& \tilde{S}_{j_0}\tilde{S}_{j_1}\cdots \tilde{S}_{j_{m_j-1}}
     \tilde{S}_{j_{m_j-1}}^* \cdots \tilde{S}_{j_1}^* \tilde{S}_{j_0}^*.
\end{align*}
By continuing this procedure, the last term above goes to
$\tilde{S}_{j_0} \tilde{S}_{j_0}^*.$
Hence we have
\begin{equation}
S_j^* S_j = \tilde{S}_{j_{m_j}}^*\tilde{S}_{j_{m_j}}, \qquad
 S_j S_j^* = \tilde{S}_{j_0} \tilde{S}_{j_0}^*, \qquad j=1,\dots,N \label{eq:4.S4}
\end{equation}
so that 
\begin{equation}
\sum_{j=1}^N S_j S_j^* = P_A \quad\text{ and }\quad
S_j^* S_j = 
\sum_{k=1}^N A(j,k) S_k S_k^*,  \qquad j=1,\dots,N. \label{eq:4.S5}
\end{equation}
Similarly we have
 \begin{equation}
T_j^* T_j =\tilde{T}_{j_0}^*\tilde{T}_{j_0} , \qquad
T_j T_j^* = \tilde{T}_{j_{m_j}}\tilde{T}_{j_{m_j}}^*, \qquad j=1,\dots,N \label{eq:4.T4}
\end{equation}
so that 
\begin{equation}
 \sum_{j=1}^N T_j T_j^*=P_{A^t}  \quad\text{ and }\quad
T_j^* T_j = 
\sum_{k=1}^N A^t(j,k) T_k T_k^*,  \qquad j=1,\dots,N. \label{eq:4.T5}
\end{equation}
As 
$S_j = P_A \tilde{S}_{j_0}\tilde{S}_{j_1}\tilde{S}_{j_2}\cdots \tilde{S}_{j_{m_j}} P_A,$
one sees that  
$S_j \in P_A\OAf P_A$ for $j=1,\dots,N$. 
Hence we have
$C^*(S_1,\dots, S_N) \subset P_A\OAf P_A.$
We will show the converse inclusion relation.
We note that for $j_k, {j'}_{k'} \in \tilde{\Sigma},$
the equality 
$A_f(j_k, {j'}_{k'}) =1$
holds
 if and only if either of the following two cases occurs

(1)  $j'=j$ and $ k' = k+1$

(2)  $A(j,j') =1$ and $k=m_j, \, k' =0.$

\noindent
For 
$
\tilde{\mu} = (\tilde{\mu} _1,\dots, \tilde{\mu}_m), \, 
\tilde{\nu} = (\tilde{\nu} _1,\dots, \tilde{\nu}_n)
\in B_*(\bar{X}_{A_f}),
$
suppose that 
$P_A \tilde{S}_{\tilde{\mu}}\tilde{S}_{\tilde{\nu}}^* P_A \ne 0.$
We first see that 
$\tilde{\mu}_1 = m(1)_0$
and 
$\tilde{\nu}_1 = n(1)_0$
for some $m(1), n(1)= 1,\dots,N.$ 
By the conditions (1), (2), we know that 
$$
\tilde{S}_{\tilde{\mu}} = S_{m(1)}\cdots S_{m(p)} \tilde{S}_{j_0} \cdots\tilde{S}_{j_k},
\quad
\tilde{S}_{\tilde{\nu}} = S_{n(1)}\cdots S_{n(q)} \tilde{S}_{i_0} \cdots\tilde{S}_{i_l}
$$
for some
$m(1), \dots, m(p), n(1), \dots, n(q) , j,i \in \{1,\dots,N\}$
and
$0\le k\le m_j, \, 0\le l \le m_i.$
Hence we may assume that 
$\tilde{\mu} = (j_0, j_1,\dots, j_k), \, 
  \tilde{\nu} = (i_0, i_1, \dots, i_l)
$
with 
$0\le k\le m_j, \, 0\le l \le m_i,$
so that 
$$
\tilde{S}_{\tilde{\mu}}\tilde{S}_{\tilde{\nu}}^*
= \tilde{S}_{j_0}\tilde{S}_{j_1}\cdots \tilde{S}_{j_k}
   \tilde{S}_{i_l}^*\cdots \tilde{S}_{i_1}^*\tilde{S}_{i_0}^*\ne 0.
$$
Since 
\begin{equation*}
\tilde{S}_{j_k} =\tilde{S}_{j_k} \tilde{S}_{j_k}^* \tilde{S}_{j_k} 
=\tilde{S}_{j_k} \tilde{S}_{j_{k+1}}\tilde{S}_{j_{k+1}}^*, \qquad 
\tilde{S}_{i_l}^* =\tilde{S}_{i_l}^* \tilde{S}_{i_l} \tilde{S}_{i_l}^* 
=\tilde{S}_{i_{l+1}} \tilde{S}_{i_{l+1}}^*\tilde{S}_{i_l}^*, \qquad 
\end{equation*}
we have
\begin{equation*}
\tilde{S}_{\tilde{\mu}}\tilde{S}_{\tilde{\nu}}^*
= \tilde{S}_{j_0}\tilde{S}_{j_1}\cdots \tilde{S}_{j_{k-1}}
\tilde{S}_{j_k} \tilde{S}_{j_{k+1}}\tilde{S}_{j_{k+1}}^*
 \tilde{S}_{i_{l+1}} \tilde{S}_{i_{l+1}}^*\tilde{S}_{i_l}^*
  \tilde{S}_{i_{l-1}}^*\cdots \tilde{S}_{i_1}^*\tilde{S}_{i_0}^*.
\end{equation*}
The condition
$
\tilde{S}_{\tilde{\mu}}\tilde{S}_{\tilde{\nu}}^*\ne 0
$
leads  $j_{k+1} = i_{l+1},$
so that 
we have $j_k = i_l, \, \cdots, j_1 = i_1, \, j_0 = i_0.$ 
Hence
\begin{align*}
& \tilde{S}_{j_0}\tilde{S}_{j_1}\cdots \tilde{S}_{j_{k-1}}
\tilde{S}_{j_k} \tilde{S}_{j_{k+1}}\tilde{S}_{j_{k+1}}^*
 \tilde{S}_{i_{l+1}} \tilde{S}_{i_{l+1}}^*\tilde{S}_{i_l}^*
  \tilde{S}_{i_{l-1}}^*\cdots \tilde{S}_{i_1}^*\tilde{S}_{i_0}^* \\
=& \tilde{S}_{j_0}\tilde{S}_{j_1}\cdots \tilde{S}_{j_{k-1}}
\tilde{S}_{j_k} \tilde{S}_{j_{k+1}}\tilde{S}_{j_{k+1}}^*
 \tilde{S}_{j_{k+1}} \tilde{S}_{j_{k+1}}^*\tilde{S}_{j_k}^*
  \tilde{S}_{j_{k-1}}^*\cdots \tilde{S}_{j_1}^*\tilde{S}_{j_0}^*.
\end{align*}
As 
$\tilde{S}_{j_{k+1}}^* \tilde{S}_{j_{k+1}}
=\tilde{S}_{j_{k+2}}\tilde{S}_{j_{k+2}}^*,
$
by continuing this procedure we know that 
\begin{equation*}
\tilde{S}_{\tilde{\mu}}\tilde{S}_{\tilde{\nu}}^*
= \tilde{S}_{j_0}\tilde{S}_{j_1}\cdots \tilde{S}_{j_{k-1}}
\tilde{S}_{j_k} \cdots \tilde{S}_{j_{m_j}}
\tilde{S}_{j_{m_j}}^*\cdots\tilde{S}_{j_k}^*
  \tilde{S}_{j_{k-1}}^*\cdots \tilde{S}_{j_1}^*\tilde{S}_{j_0}^*
= S_j S_j^*.
\end{equation*}  
This shows that 
the element
$
P_A\tilde{S}_{\tilde{\mu}}\tilde{S}_{\tilde{\nu}}^*P_A$
belongs to the algebra
$C^*(S_1,\dots, S_N)$
 so that we obtain that
$P_A \OAf P_A =C^*(S_1,\dots, S_N)$
and similarly 
$P_{A^t} \OTAf P_{A^t} =C^*(T_1,\dots, T_N).$
\end{proof}

 Keep the above situation.
Recall that the vertex set 
$\V_f 
= \cup_{j=1}^N \{j_0, j_1, j_2, \dots,j_{m_j}\}$
 of the graph $\G_f$ is denoted by 
$\tilde{\Sigma}.$
We set
\begin{equation*}
\tilde{U}_{j_k} = \tilde{T}_{j_k}^*\otimes \tilde{S}_{j_k}, \qquad j_k \in \tilde{\Sigma}. 
\end{equation*}
The partial isometries $\tilde{U}_{j_k}, j_k \in \tilde{\Sigma}$ belong to the
Ruelle algebra $\mathcal{R}_{A_f}$ for the matrix $A_f$.
The following lemma is direct from the identities \eqref{eq:4.S2}, \eqref{eq:4.T2}.
\begin{lemma}\label{lem:6.2} For $j=1,\dots, N,\, k=0,1,\dots,m_j,$
we have
\begin{align*}
( \tilde{U}_{j_k}\tilde{U}_{j_{k+1}}\cdots \tilde{U}_{j_{m_j}})
    ( \tilde{U}_{j_k}\tilde{U}_{j_{k+1}}\cdots \tilde{U}_{j_{m_j}})^*
     =&\tilde{U}_{j_k}\tilde{U}_{j_k}^* 
=\tilde{T}_{j_k}^* \tilde{T}_{j_k} \otimes \tilde{S}_{j_k} \tilde{S}_{j_k}^*\\
( \tilde{U}_{j_k}\tilde{U}_{j_{k+1}}\cdots \tilde{U}_{j_{m_j}})^*
    ( \tilde{U}_{j_{k}}\tilde{U}_{j_{k+1}}\cdots \tilde{U}_{j_{m_j}})
     =&\tilde{U}_{j_{m_j}}^*\tilde{U}_{j_{m_j}} =T_j T_j^* \otimes S_j^* S_j. 
\end{align*}
\end{lemma}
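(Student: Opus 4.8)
The plan is to reduce both identities to two purely one-sided telescoping computations, one inside $\OTAf$ and one inside $\OAf$, by exploiting the tensor-product shape $\tilde{U}_{j_k}=\tilde{T}_{j_k}^*\otimes\tilde{S}_{j_k}$. First I would expand
\[
\tilde{U}_{j_k}\tilde{U}_{j_{k+1}}\cdots\tilde{U}_{j_{m_j}}
=\bigl(\tilde{T}_{j_k}^*\tilde{T}_{j_{k+1}}^*\cdots\tilde{T}_{j_{m_j}}^*\bigr)
 \otimes\bigl(\tilde{S}_{j_k}\tilde{S}_{j_{k+1}}\cdots\tilde{S}_{j_{m_j}}\bigr),
\]
so that the range projection $(\tilde{U}_{j_k}\cdots\tilde{U}_{j_{m_j}})(\tilde{U}_{j_k}\cdots\tilde{U}_{j_{m_j}})^*$ factors as
\[
\bigl(\tilde{T}_{j_k}^*\cdots\tilde{T}_{j_{m_j}}^*\tilde{T}_{j_{m_j}}\cdots\tilde{T}_{j_k}\bigr)
\otimes
\bigl(\tilde{S}_{j_k}\cdots\tilde{S}_{j_{m_j}}\tilde{S}_{j_{m_j}}^*\cdots\tilde{S}_{j_k}^*\bigr),
\]
and the source projection $(\tilde{U}_{j_k}\cdots\tilde{U}_{j_{m_j}})^*(\tilde{U}_{j_k}\cdots\tilde{U}_{j_{m_j}})$ factors as the analogous product with the two chains in reverse order.

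For the $\tilde{S}$-factor I would argue exactly as in the proof of the preceding lemma: the relation \eqref{eq:4.S2} gives $\tilde{S}_{j_{m_j}}\tilde{S}_{j_{m_j}}^*=\tilde{S}_{j_{m_j-1}}^*\tilde{S}_{j_{m_j-1}}$, so the innermost pair $\tilde{S}_{j_{m_j-1}}(\tilde{S}_{j_{m_j}}\tilde{S}_{j_{m_j}}^*)\tilde{S}_{j_{m_j-1}}^*$ collapses to $\tilde{S}_{j_{m_j-1}}\tilde{S}_{j_{m_j-1}}^*$; feeding this downward, by a finite induction on $m_j-k$, peels off one index at a time and leaves $\tilde{S}_{j_k}\tilde{S}_{j_k}^*$. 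The mirror-image telescoping shows $\tilde{S}_{j_{m_j}}^*\cdots\tilde{S}_{j_k}^*\tilde{S}_{j_k}\cdots\tilde{S}_{j_{m_j}}=\tilde{S}_{j_{m_j}}^*\tilde{S}_{j_{m_j}}$. For the $\tilde{T}$-factor I would run the same two computations with \eqref{eq:4.T2} in place of \eqref{eq:4.S2}, obtaining $\tilde{T}_{j_k}^*\cdots\tilde{T}_{j_{m_j}}^*\tilde{T}_{j_{m_j}}\cdots\tilde{T}_{j_k}=\tilde{T}_{j_k}^*\tilde{T}_{j_k}$ and $\tilde{T}_{j_{m_j}}\cdots\tilde{T}_{j_k}\tilde{T}_{j_k}^*\cdots\tilde{T}_{j_{m_j}}^*=\tilde{T}_{j_{m_j}}\tilde{T}_{j_{m_j}}^*$.

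Reassembling the two tensor factors then gives $(\tilde{U}_{j_k}\cdots\tilde{U}_{j_{m_j}})(\tilde{U}_{j_k}\cdots\tilde{U}_{j_{m_j}})^*=\tilde{T}_{j_k}^*\tilde{T}_{j_k}\otimes\tilde{S}_{j_k}\tilde{S}_{j_k}^*=\tilde{U}_{j_k}\tilde{U}_{j_k}^*$ and $(\tilde{U}_{j_k}\cdots\tilde{U}_{j_{m_j}})^*(\tilde{U}_{j_k}\cdots\tilde{U}_{j_{m_j}})=\tilde{T}_{j_{m_j}}\tilde{T}_{j_{m_j}}^*\otimes\tilde{S}_{j_{m_j}}^*\tilde{S}_{j_{m_j}}=\tilde{U}_{j_{m_j}}^*\tilde{U}_{j_{m_j}}$. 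The last equalities asserted in the statement are then immediate from \eqref{eq:4.S4} and \eqref{eq:4.T4}, which identify $\tilde{S}_{j_{m_j}}^*\tilde{S}_{j_{m_j}}$ with $S_j^*S_j$ and $\tilde{T}_{j_{m_j}}\tilde{T}_{j_{m_j}}^*$ with $T_jT_j^*$.

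There is no genuine obstacle here: the content is the same collapsing argument already used for the preceding lemma, so the only point requiring care is the bookkeeping --- keeping track of the order of the factors when an adjoint is passed through a product of partial isometries, and phrasing the two telescopings as honest finite inductions on $m_j-k$ rather than informal manipulations of $\cdots$.
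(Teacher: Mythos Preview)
Your proposal is correct and is exactly the argument the paper has in mind: the paper simply records that the lemma ``is direct from the identities \eqref{eq:4.S2}, \eqref{eq:4.T2}'', and what you have written is precisely the unpacking of that remark via the tensor factorisation of $\tilde{U}_{j_k}$ and the same telescoping computations already carried out in the proof of the preceding lemma. Your explicit invocation of \eqref{eq:4.S4} and \eqref{eq:4.T4} for the final identification with $T_jT_j^*\otimes S_j^*S_j$ is a small improvement in precision over the paper's one-line justification.
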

Hence we have
\begin{align*}
E_{A_f} 
=& \sum_{j=1}^N \sum_{k=0}^{m_j} \tilde{T}_{j_k}\tilde{T}_{j_k}^* \otimes
\tilde{S}_{j_k}^*\tilde{S}_{j_k}
= \sum_{j=1}^N \sum_{k=0}^{m_j} \tilde{U}_{j_k}\tilde{U}_{j_k}^*
\quad \text{ in } \quad\mathcal{R}_{A_f}, \\
E_A 
=& \sum_{j=1}^N  T_j T_j^*\otimes  S_j^* S_j
= \sum_{j=1}^N \tilde{U}_{j_{m_j}}^*\tilde{U}_{j_{m_j}}
\quad \text{ in } \quad \RA.
\end{align*}


Let $H$ be the separable infinite dimensional Hilbert space $\ell^2(\N)$. 
Take isometries $s_{j_k}, j_k \in \tilde{\Sigma}$ on $H$
 such that 
\begin{equation}
\sum_{k=0}^{m_j} s_{j_k}s_{j_k}^* = 1_H, \qquad  j=1,\dots,N.  \label{eq:6.11}
\end{equation}
We define a partial isometry $\tilde{V}_f$ in the tensor product $C^*$-algebra 
$\mathcal{R}_{A_f} \otimes B(H)$ by setting 
\begin{equation}
\tilde{V}_f =
\sum_{j=1}^N\sum_{k=0}^{m_j} 
\tilde{U}_{j_k}\tilde{U}_{j_{k+1}}\cdots \tilde{U}_{j_{m_j}}
       \otimes s_{j_k}^*.    \label{eq:6.12}
\end{equation}
\begin{lemma}
$\tilde{V}_f \tilde{V}_f^* = E_{A_f} \otimes 1_H$ and 
$\tilde{V}_f^* \tilde{V}_f = E_A \otimes 1_H.$
\end{lemma}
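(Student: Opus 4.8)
The plan is to expand $\tilde{V}_f\tilde{V}_f^{*}$ and $\tilde{V}_f^{*}\tilde{V}_f$ directly from \eqref{eq:6.12}, collapse each double sum to a diagonal one, and then read off the answer from Lemma \ref{lem:6.2} and \eqref{eq:6.11}. Write $w_{j_k}:=\tilde{U}_{j_k}\tilde{U}_{j_{k+1}}\cdots\tilde{U}_{j_{m_j}}$ for $j_k\in\tilde{\Sigma}$, so that $\tilde{V}_f=\sum_{j=1}^{N}\sum_{k=0}^{m_j}w_{j_k}\otimes s_{j_k}^{*}$. Since $\tilde{U}_{j_k}=\tilde{T}_{j_k}^{*}\otimes\tilde{S}_{j_k}$, each $w_{j_k}$ factors inside $\OTAf\otimes\OAf$ as $w_{j_k}=(\tilde{T}_{j_k}^{*}\tilde{T}_{j_{k+1}}^{*}\cdots\tilde{T}_{j_{m_j}}^{*})\otimes(\tilde{S}_{j_k}\tilde{S}_{j_{k+1}}\cdots\tilde{S}_{j_{m_j}})$.

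First I would analyse the products $w_{j_k}w_{j'_{k'}}^{*}$ and $w_{j_k}^{*}w_{j'_{k'}}$ for $j_k,j'_{k'}\in\tilde{\Sigma}$. Each splits as a tensor product, and in each case the $\OAf$-leg is a word product of the form $\tilde{S}_{\mu}\tilde{S}_{\nu}^{*}$ or $\tilde{S}_{\mu}^{*}\tilde{S}_{\nu}$ in which the first letters of $\mu$ and $\nu$ are the vertices $j_k$ and $j'_{k'}$ respectively. Since the range projections $\{\tilde{S}_{p}\tilde{S}_{p}^{*}\}_{p\in\tilde{\Sigma}}$ are mutually orthogonal in $\OAf$ (being subprojections of a family summing to $1$, by \eqref{eq:4.S1}), such a word product vanishes whenever $j_k\neq j'_{k'}$, i.e. unless $(j,k)=(j',k')$. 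Consequently
\[
\tilde{V}_f\tilde{V}_f^{*}=\sum_{j=1}^{N}\sum_{k=0}^{m_j}w_{j_k}w_{j_k}^{*}\otimes s_{j_k}^{*}s_{j_k},\qquad
\tilde{V}_f^{*}\tilde{V}_f=\sum_{j=1}^{N}\sum_{k=0}^{m_j}w_{j_k}^{*}w_{j_k}\otimes s_{j_k}s_{j_k}^{*}.
\]

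For the first identity, Lemma \ref{lem:6.2} gives $w_{j_k}w_{j_k}^{*}=\tilde{U}_{j_k}\tilde{U}_{j_k}^{*}$, while $s_{j_k}^{*}s_{j_k}=1_H$ because $s_{j_k}$ is an isometry; summing over $j_k\in\tilde{\Sigma}$ and using the identity $E_{A_f}=\sum_{j}\sum_{k}\tilde{U}_{j_k}\tilde{U}_{j_k}^{*}$ established above yields $\tilde{V}_f\tilde{V}_f^{*}=E_{A_f}\otimes 1_H$. For the second, Lemma \ref{lem:6.2} gives $w_{j_k}^{*}w_{j_k}=\tilde{U}_{j_{m_j}}^{*}\tilde{U}_{j_{m_j}}=T_jT_j^{*}\otimes S_j^{*}S_j$, which is independent of $k$; pulling the inner sum out and using $\sum_{k=0}^{m_j}s_{j_k}s_{j_k}^{*}=1_H$ from \eqref{eq:6.11} gives $\tilde{V}_f^{*}\tilde{V}_f=\sum_{j=1}^{N}\tilde{U}_{j_{m_j}}^{*}\tilde{U}_{j_{m_j}}\otimes 1_H=E_A\otimes 1_H$ by the identity $E_A=\sum_{j}\tilde{U}_{j_{m_j}}^{*}\tilde{U}_{j_{m_j}}$.

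The one delicate point — the main obstacle — is the vanishing of the off-diagonal terms. One must not assume, and indeed \eqref{eq:6.11} does not assume, any orthogonality between $s_{j_k}$ and $s_{j'_{k'}}$ belonging to distinct columns $j\neq j'$; hence the collapse of both double sums has to be extracted entirely from the Cuntz--Krieger leg of $w_{j_k}w_{j'_{k'}}^{*}$ (resp.\ $w_{j_k}^{*}w_{j'_{k'}}$), via the first-letter orthogonality of the $\tilde{S}$'s (equivalently, one could argue with the $\tilde{T}$'s and \eqref{eq:4.T1}). Once this bookkeeping is in place, everything else is an immediate application of Lemma \ref{lem:6.2} and \eqref{eq:6.11}.
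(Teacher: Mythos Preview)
Your argument for $\tilde{V}_f^{*}\tilde{V}_f$ is fine: the off-diagonal $\OAf$-leg is $\tilde{S}_{\mu}^{*}\tilde{S}_{\nu}$ with first letters $j_k,j'_{k'}$, and $\tilde{S}_{j_k}^{*}\tilde{S}_{j'_{k'}}=0$ by the orthogonality from \eqref{eq:4.S1}. That collapses the double sum exactly as the paper does.

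The gap is in $\tilde{V}_f\tilde{V}_f^{*}$. Orthogonality of the range projections $\tilde{S}_{p}\tilde{S}_{p}^{*}$ does \emph{not} force $\tilde{S}_{\mu}\tilde{S}_{\nu}^{*}=0$ when the first letters of $\mu$ and $\nu$ differ; it only says the range and source of that element lie under orthogonal projections. Concretely, take $j=j'$ and $k<k'$: using \eqref{eq:4.S2} one computes the $\OAf$-leg of $w_{j_k}w_{j_{k'}}^{*}$ to be $\tilde{S}_{j_k}\cdots\tilde{S}_{j_{k'-1}}\neq 0$, and the $\OTAf$-leg to be $\tilde{T}_{j_k}^{*}\cdots\tilde{T}_{j_{k'-1}}^{*}\neq 0$; so $w_{j_k}w_{j_{k'}}^{*}\neq 0$. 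Thus your ``collapse via the Cuntz--Krieger leg alone'' cannot work for this product. The $\tilde{T}$-argument you mention in the parenthetical only kills the terms with $j\neq j'$ (the junction letters there are $j_{m_j},\,j'_{m_{j'}}$), not those with $j=j'$, $k\neq k'$.

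The repair is exactly what you dismissed: for fixed $j$ the isometries $s_{j_0},\dots,s_{j_{m_j}}$ have orthogonal ranges by \eqref{eq:6.11}, hence $s_{j_k}^{*}s_{j_{k'}}=\delta_{k,k'}1_H$. So in the expansion of $\tilde{V}_f\tilde{V}_f^{*}$ the $B(H)$-leg kills the $j=j'$, $k\neq k'$ terms, while the source-projection orthogonality $w_{j_k}^{*}w_{j_k}=T_jT_j^{*}\otimes S_j^{*}S_j$ (Lemma \ref{lem:6.2}) kills the $j\neq j'$ terms. With that correction your computation agrees with the paper's.
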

\begin{proof}
By Lemma \ref{lem:6.2}
we have the following equalities.
\begin{align*}
  \tilde{V}_f^*\tilde{V}_f  
=&
\sum_{j=1}^N\sum_{k=0}^{m_j} (\tilde{U}_{j_k}\tilde{U}_{j_{k+1}}\cdots \tilde{U}_{j_{m_j}})^*
(\tilde{U}_{j_k}\tilde{U}_{j_{k+1}}\cdots \tilde{U}_{j_{m_j}})
       \otimes s_{j_k}s_{j_k}^* \\ 
=&
\sum_{j=1}^N
(T_j T_j^* \otimes S_j^* S_j)    
\otimes (\sum_{k=0}^{m_j}s_{j_k}s_{j_k}^*) \\
=&
\sum_{j=1}^N
T_j T_j^* \otimes S_j^* S_j    
\otimes 1_H \\
= & E_A \otimes 1_H
\end{align*}
and
\begin{align*}
  \tilde{V}_f \tilde{V}_f^*  
=&
\sum_{j=1}^N\sum_{k=0}^{m_j} 
(\tilde{U}_{j_k}\tilde{U}_{j_{k+1}}\cdots \tilde{U}_{j_{m_j}})
(\tilde{U}_{j_k}\tilde{U}_{j_{k+1}}\cdots \tilde{U}_{j_{m_j}})^*
       \otimes s_{j_k}^*s_{j_k} \\ 
=&
\sum_{j=1}^N
\sum_{k=0}^{m_j} \tilde{U}_{j_k} \tilde{U}_{j_k}^*  \otimes 1_H \\
= &  E_{A_f}\otimes 1_H.
\end{align*}
\end{proof}
Recall that $\K$ denotes  the $C^*$-algebra $\K(H)$ 
of compact operators on the Hilbert space $H.$ 
As 
$\tilde{U}_{j_k}$ belongs to $\mathcal{R}_{A_f}$
and
$ \mathcal{R}_{A_f}\otimes B(H)$
is contained in the multiplier algebra
$M(\mathcal{R}_{A_f}\otimes \K)$
of $\mathcal{R}_{A_f}\otimes \K,$
the partial isometry 
$\tilde{V}_f $ belongs to 
$M(\mathcal{R}_{A_f}\otimes \K)$.


\begin{lemma}\label{lem:6.4}
$E_{A_f}(P_{A^t}\otimes P_A) =(P_{A^t}\otimes P_A) E_{A_f}
=E_A.$ 
\end{lemma}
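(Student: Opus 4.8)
The plan is to compute the product $E_{A_f}(P_{A^t}\otimes P_A)$ directly from the generator-level formulas already recorded, use orthogonality of the range projections in $\OAf$ and $\OTAf$ to collapse the resulting quadruple sum, and then recognise the outcome as $E_A$; the remaining equality follows by taking adjoints.

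First I would substitute the formula $E_{A_f}=\sum_{j=1}^N\sum_{k=0}^{m_j}\tilde{T}_{j_k}\tilde{T}_{j_k}^*\otimes\tilde{S}_{j_k}^*\tilde{S}_{j_k}$ together with $P_{A^t}\otimes P_A=\sum_{i=1}^N\sum_{l=1}^N\tilde{T}_{i_{m_i}}\tilde{T}_{i_{m_i}}^*\otimes\tilde{S}_{l_0}\tilde{S}_{l_0}^*$, and multiply, obtaining a sum over $j,k,i,l$ of terms $(\tilde{T}_{j_k}\tilde{T}_{j_k}^*\,\tilde{T}_{i_{m_i}}\tilde{T}_{i_{m_i}}^*)\otimes(\tilde{S}_{j_k}^*\tilde{S}_{j_k}\,\tilde{S}_{l_0}\tilde{S}_{l_0}^*)$. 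By relation \eqref{eq:4.T1} the range projections $\{\tilde{T}_{p_q}\tilde{T}_{p_q}^*\}_{p_q\in\tilde{\Sigma}}$ are mutually orthogonal projections, so the first tensor leg vanishes unless $(j,k)=(i,m_i)$, i.e. $i=j$ and $k=m_j$, in which case it equals $\tilde{T}_{j_{m_j}}\tilde{T}_{j_{m_j}}^*$. With $k=m_j$ fixed, the second leg is evaluated using \eqref{eq:4.S3}, namely $\tilde{S}_{j_{m_j}}^*\tilde{S}_{j_{m_j}}=\sum_{k=1}^N A(j,k)\tilde{S}_{k_0}\tilde{S}_{k_0}^*$, together with the orthogonality of the range projections $\{\tilde{S}_{p_q}\tilde{S}_{p_q}^*\}$ coming from \eqref{eq:4.S1}; this yields $\tilde{S}_{j_{m_j}}^*\tilde{S}_{j_{m_j}}\cdot\tilde{S}_{l_0}\tilde{S}_{l_0}^*=A(j,l)\tilde{S}_{l_0}\tilde{S}_{l_0}^*$. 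Collecting everything, $E_{A_f}(P_{A^t}\otimes P_A)=\sum_{j,l=1}^N A(j,l)\,\tilde{T}_{j_{m_j}}\tilde{T}_{j_{m_j}}^*\otimes\tilde{S}_{l_0}\tilde{S}_{l_0}^*$.

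On the other hand, \eqref{eq:4.T4} gives $T_jT_j^*=\tilde{T}_{j_{m_j}}\tilde{T}_{j_{m_j}}^*$, while \eqref{eq:4.S4} and \eqref{eq:4.S3} give $S_j^*S_j=\tilde{S}_{j_{m_j}}^*\tilde{S}_{j_{m_j}}=\sum_{l=1}^N A(j,l)\tilde{S}_{l_0}\tilde{S}_{l_0}^*$; hence the displayed expression is precisely $E_A=\sum_{j=1}^N T_jT_j^*\otimes S_j^*S_j$, which establishes $E_{A_f}(P_{A^t}\otimes P_A)=E_A$. Finally, since $E_{A_f}$, $P_{A^t}$, $P_A$ and $E_A$ are all self-adjoint projections, taking adjoints in the identity just proved gives $(P_{A^t}\otimes P_A)E_{A_f}=\bigl(E_{A_f}(P_{A^t}\otimes P_A)\bigr)^*=E_A^*=E_A$, completing the proof. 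The only point needing care is the bookkeeping with the two-index generators $\tilde{S}_{j_k},\tilde{T}_{j_k}$ and applying the orthogonality relations to the correct (range versus source) projections; there is no genuine obstacle.
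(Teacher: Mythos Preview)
Your proof is correct and follows essentially the same approach as the paper: expand both projections in terms of the generators $\tilde{S}_{j_k},\tilde{T}_{j_k}$, use orthogonality of the range projections $\tilde{T}_{j_k}\tilde{T}_{j_k}^*$ to collapse to the $k=m_j$ terms, then invoke \eqref{eq:4.S3}, \eqref{eq:4.S4}, \eqref{eq:4.T4} to identify the result with $E_A$. The paper's version is slightly terser (it absorbs the $\tilde{S}_{l_0}\tilde{S}_{l_0}^*$ factor by noting $S_j^*S_j\cdot S_lS_l^*=A(j,l)S_lS_l^*$ and summing) and obtains the second equality with a one-word ``hence'' rather than your explicit adjoint argument, but the substance is identical.
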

\begin{proof}
We have
\begin{align*}
 E_{A_f}(P_{A^t}\otimes P_A) 
 =&
 \sum_{j=1}^N (\sum_{k=0}^{m_j} \tilde{T}_{j_k}\tilde{T}_{j_k}^*\otimes\tilde{S}_{j_k}^*\tilde{S}_{j_k} )
\cdot( \sum_{j=1}^N \tilde{T}_{j_{m_j}}\tilde{T}_{j_{m_j}}^*\otimes
\sum_{i=1}^N\tilde{S}_{i_0}\tilde{S}_{i_0}^*) \\
=&
 \sum_{j=1}^N \{ \tilde{T}_{j_{m_j}}\tilde{T}_{j_{m_j}}^*\otimes
(\sum_{i=1}^N\tilde{S}_{j_{m_j}}^*\tilde{S}_{j_{m_j}}\cdot \tilde{S}_{i_0}\tilde{S}_{i_0}^*)\}
\end{align*}
By the identities \eqref{eq:4.S4}, \eqref{eq:4.T4}, 
we know 
$ 
\tilde{S}_{j_{m_j}}^*\tilde{S}_{j_{m_j}}=S_j^* S_j, \, 
\tilde{S}_{i_0}\tilde{S}_{i_0}^*=S_i S_i^*, \,
\tilde{T}_{j_{m_j}}\tilde{T}_{j_{m_j}}^*= T_j T_j^*,$
so that we have
\begin{equation*}
 E_{A_f}(P_{A^t}\otimes P_A) 
=
 \sum_{j=1}^N \{ T_j T_j^*\otimes
S_j^* S_j(\sum_{i=1}^N A(j,i)S_i S_i^*)\} 
=
 \sum_{j=1}^N 
( T_j T_j^*\otimes S_j^* S_j)
= E_A, 
\end{equation*}
and hence 
$(P_{A^t}\otimes P_A) E_{A_f}
=E_A.$ 
\end{proof}
\begin{theorem}\label{thm:6.5}
Let $A$ be an irreducible, non-permutation matrix
with entries in $\{0,1\}.$
For a continuous function $f:\bar{X}_A\longrightarrow \N$, 
there exists an isomorphism
$\Phi_f: \WRAf\otimes\K\longrightarrow \WRA\otimes\K$
of $C^*$-algebras such that 
\begin{equation*}
\Phi_f\circ(\gamma^{A_f}_{(r,s)}\otimes\id) =
(\gamma^{A,f}_{(r,s)}\otimes\id)\circ\Phi_f, \quad (r,s) \in \T^2.
\end{equation*}
\end{theorem}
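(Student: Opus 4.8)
The plan is to reduce first to the case where $f$ depends only on the $0$th coordinate, and then to realise $\Phi_f$ as conjugation by a gauge--invariant partial isometry. By Corollary~\ref{cor:5.5} (applied with $g=f$) it suffices, after composing with the isomorphisms coming from topological conjugacies (Proposition~\ref{prop:5.4}), to treat $f=\sum_{j=1}^N f_j\chi_{U_j(0)}$ with $f_j\in\N$; then $\bar{X}_{A_f}$ is the topological Markov shift of the graph $\G_f$ and the whole apparatus of this section applies. Using the lemma identifying $C^*(S_1,\dots,S_N)=P_A\OAf P_A$ with $\OA$ and $C^*(T_1,\dots,T_N)=P_{A^t}\OTAf P_{A^t}$ with $\OTA$, together with the fact that the gauge action sends the product $S_j=\widetilde{S}_{j_0}\cdots\widetilde{S}_{j_{m_j}}$ of $f_j$ generators to $e^{2\pi\sqrt{-1}f_j s}S_j$, Lemma~\ref{lem:5.3} shows that under the identification $\WRA=E_A(\OTAf\otimes\OAf)E_A$ (which makes sense because $E_A=E_{A_f}(P_{A^t}\otimes P_A)\le E_{A_f}$ by Lemma~\ref{lem:6.4}) the action $\gamma^{A,f}$ is exactly the restriction of $\gamma^{A_f}$ to the $\gamma^{A_f}$--invariant corner $E_A\WRAf E_A=\WRA$. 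Hence it is enough to produce a $(\gamma^{A_f}\otimes\id)$--equivariant isomorphism $\WRAf\otimes\K\cong (E_A\otimes 1_H)(\WRAf\otimes\K)(E_A\otimes 1_H)$.

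For this it suffices to find a partial isometry $\mathcal V\in M(\WRAf\otimes\K)$ with $\mathcal V^*\mathcal V=E_A\otimes 1_H$, $\mathcal V\mathcal V^*=E_{A_f}\otimes 1_H$, and $(\gamma^{A_f}_{(r,s)}\otimes\id)(\mathcal V)=\mathcal V$ for all $(r,s)\in\T^2$. Indeed $\Phi_f:=\Ad(\mathcal V^*)$ then restricts to an isomorphism of $\WRAf\otimes\K$ (the full source corner, since $E_{A_f}\otimes 1_H$ acts as the identity there) onto $(E_A\otimes 1_H)(\WRAf\otimes\K)(E_A\otimes 1_H)=\WRA\otimes\K$, and invariance of $\mathcal V$ gives $\Phi_f\circ(\gamma^{A_f}_{(r,s)}\otimes\id)=(\gamma^{A_f}_{(r,s)}|_{\WRA}\otimes\id)\circ\Phi_f=(\gamma^{A,f}_{(r,s)}\otimes\id)\circ\Phi_f$.

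The partial isometry $\widetilde{V}_f$ of \eqref{eq:6.12} already satisfies the first two conditions, so the point is to correct it to a gauge--invariant one. A direct computation with $\widetilde{U}_{j_k}=\widetilde{T}_{j_k}^*\otimes\widetilde{S}_{j_k}$, on which $\gamma^{A_f}_{(r,s)}$ acts by the scalar $e^{2\pi\sqrt{-1}(s-r)}$, gives $(\gamma^{A_f}_{(r,s)}\otimes\id)(\widetilde{V}_f)=z_{(r,s)}\widetilde{V}_f$, where $z_{(r,s)}=\sum_{j,k}e^{2\pi\sqrt{-1}(s-r)(m_j-k+1)}\,\widetilde{U}_{j_k}\widetilde{U}_{j_k}^*\otimes 1_H$ is a unitary of $M(\WRAf\otimes\K)$ built from the gauge--invariant projections $\widetilde{U}_{j_k}\widetilde{U}_{j_k}^*$ (Lemma~\ref{lem:6.2}) and depending homomorphically on $(r,s)$. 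One then looks for a unitary $m\in M(\WRAf\otimes\K)$ with $z_{(r,s)}=(\gamma^{A_f}_{(r,s)}\otimes\id)(m^*)\,m$ and sets $\mathcal V:=m\widetilde{V}_f$. The relevant input is the unitary $U_{A_f}=\sum_{j,k}\widetilde{U}_{j_k}\in\mathcal R_{A_f}\subseteq\WRAf$, which satisfies $U_{A_f}U_{A_f}^*=U_{A_f}^*U_{A_f}=E_{A_f}$ and $\gamma^{A_f}_{(r,s)}(U_{A_f})=e^{2\pi\sqrt{-1}(s-r)}U_{A_f}$, together with the identity $\widetilde{U}_{j_k}\widetilde{U}_{j_k}^*=U_{A_f}^{m_j-k+1}(T_jT_j^*\otimes S_j^*S_j)(U_{A_f}^*)^{m_j-k+1}$ (obtained since multiplying $\widetilde{U}_{j_k}\cdots\widetilde{U}_{j_{m_j}}$ on the left by $U_{A_f}^*$ shifts $k$ up by one). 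Equivalently, and perhaps more cleanly: $E_A\otimes 1_H$ and $E_{A_f}\otimes 1_H$ are full, properly infinite projections in the multiplier algebra of the stable, $\sigma$--unital, simple AF-algebra $C^*(G^a_{A_f})\otimes\K=(\WRAf\otimes\K)^{\gamma^{A_f}\otimes\id}$, whose $K$--theory vanishes, so they are Murray--von Neumann equivalent there; the implementing partial isometry, adjusted so as to lie in $M(\WRAf\otimes\K)$ (using that $\widetilde{V}_f$ already witnesses the corresponding equivalence in $M(\WRAf\otimes\K)$), is a $\mathcal V$ as required.

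The hard part is exactly this last step: showing that the obstruction cocycle $z$ — the failure of $\widetilde{V}_f$ to be $\gamma^{A_f}$--invariant — is a coboundary for $\gamma^{A_f}\otimes\id$, or, what is the same, that $E_A$ and $E_{A_f}$ become equivalent after stabilisation in a way compatible with the gauge action, the degree--one unitary $U_{A_f}$ being the device that trivialises $z$ and simplicity of $C^*(G^a_{A_f})$ guaranteeing fullness. Once $\mathcal V$ is constructed the statement follows at once, and the passage from cylinder functions to a general continuous $f\colon\bar{X}_A\to\N$ is handled by combining the cylinder case with Proposition~\ref{prop:5.4} and the conjugacy invariance of the triple $(\WRA,C(\bar{X}_A),\gamma^A)$.
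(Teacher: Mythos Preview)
Your overall strategy --- reduce to a cylinder function via Corollary~\ref{cor:5.5}, identify $\WRA$ with the corner $E_A\WRAf E_A$ on which $\gamma^{A_f}$ restricts to $\gamma^{A,f}$, and realise $\Phi_f$ as conjugation by a partial isometry with source $E_A\otimes 1_H$ and range $E_{A_f}\otimes 1_H$ --- is exactly that of the paper. You are moreover right that $\gamma^{A_f}_{(r,s)}(\tilde U_{j_k})=e^{2\pi\sqrt{-1}(s-r)}\tilde U_{j_k}$, so the paper's $\tilde V_f$ is \emph{not} fixed by the full $\T^2$-action (only by the diagonal $r=s$); the equivariance therefore genuinely requires the correction you attempt, and this is a point the paper's proof glosses over.

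The gap is in how you justify that correction. Your explicit cocycle approach is not carried through: the identity $\tilde U_{j_k}\tilde U_{j_k}^*=U_{A_f}^{\,m_j-k+1}(T_jT_j^*\otimes S_j^*S_j)(U_{A_f}^*)^{\,m_j-k+1}$ is correct, but no unitary $m$ with $z_{(r,s)}=(\gamma^{A_f}_{(r,s)}\otimes\id)(m^*)m$ is actually produced, and the obvious candidate $\sum_{j,k}(U_{A_f}^*)^{\,m_j-k+1}q_{j,k}\otimes 1_H$ is not unitary. In your abstract approach the assertion that the $K$-theory of $C^*(G^a_{A_f})\otimes\K$ ``vanishes'' is false: $K_0$ is the bilateral dimension group $\tilde\Delta_{A_f}$, nonzero and torsion-free, and in general $[E_A]\ne[E_{A_f}]$ there. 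What is true, and what you need, is that $C^*(G^a_{A_f})\otimes\K$ is stable, simple and $\sigma$-unital, so by Brown's theorem every full projection in its multiplier algebra is Murray--von~Neumann equivalent to the unit $E_{A_f}\otimes 1_H$; since $E_A\otimes 1_H$ is full by simplicity, this yields $\mathcal V\in M(C^*(G^a_{A_f})\otimes\K)$ with the required source and range. The nondegenerate unital inclusion $C^*(G^a_{A_f})\subset\WRAf$ then gives an embedding $M(C^*(G^a_{A_f})\otimes\K)\hookrightarrow M(\WRAf\otimes\K)$ whose image is automatically fixed by $\gamma^{A_f}\otimes\id$, so $\mathcal V$ serves as your gauge-invariant partial isometry. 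With this repair your argument is complete.
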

\begin{proof}
By Corollary \ref{cor:5.5},
we may assume that $f$ is of the form
$\sum_{j=1}^N f_j \chi_{U_j(0)}.$
Define
$\Phi_f: \WRAf\otimes\K\longrightarrow \WRA\otimes\K$
by setting
$\Phi_f(x\otimes K) = \tilde{V}_f^*(x\otimes K)\tilde{V}_f
$ for 
$x\otimes K\in \WRAf\otimes\K.$
We note that 
\begin{gather*}
\WRAf =E_{A_f}(\mathcal{O}_{(A_f)^t}\otimes \mathcal{O}_{A_f})E_{A_f},
\qquad
\WRA = E_A(\OTA\otimes\OA)E_A, \\
\tilde{V}_f\tilde{V}_f^* = E_{A_f}\otimes 1_H, \qquad
\tilde{V}_f^*\tilde{V}_f = E_A\otimes 1_H, \\
(P_{A^t} \otimes P_A)(\mathcal{O}_{(A_f)^t}\otimes \mathcal{O}_{A_f})
(P_{A^t} \otimes P_A) = \OTA\otimes\OA.
\end{gather*}
For 
$x\otimes K\in \WRAf\otimes\K,$
we have
\begin{align*}
& \tilde{V}_f^*(x\otimes K)\tilde{V}_f \\
=& (E_A\otimes 1_H)\tilde{V}_f^*(x\otimes K)\tilde{V}_f (E_A\otimes 1_H) \\
=& (E_A\otimes 1_H)((P_{A^t} \otimes P_A)\otimes1_H)
\tilde{V}_f^*(x\otimes K)\tilde{V}_f ((P_{A^t} \otimes P_A)\otimes1_H) (E_A\otimes 1_H). 
\end{align*}
As $\tilde{V}_f \in M(\WRAf\otimes\K)$,
the element
$\tilde{V}_f^*(x\otimes K)\tilde{V}_f$ belongs to $\WRAf\otimes\K$
which is 
$(E_{A_f}\otimes 1_H)((\mathcal{O}_{(A_f)^t}\otimes \mathcal{O}_{A_f})\otimes\K)
(E_{A_f}\otimes 1_H).$
By Lemma \ref{lem:6.4}, we have
\begin{equation*}
(E_A\otimes 1_H)((P_{A^t} \otimes P_A)\otimes1_H)(E_{A_f}\otimes 1_H)
=(E_A\otimes1_H)((P_{A^t} \otimes P_A)\otimes1_H)
\end{equation*}
so that 
$\tilde{V}_f^*(x\otimes K)\tilde{V}_f$
belongs to the algebra
$$
(E_A\otimes1_H)((P_{A^t} \otimes P_A)\otimes1_H)
((\mathcal{O}_{(A_f)^t}\otimes \mathcal{O}_{A_f})\otimes\K)
((P_{A^t} \otimes P_A)\otimes1_H)
(E_A\otimes1_H)
$$
which is 
$$
E_A(\OTA\otimes\OA)E_A\otimes\K = \WRA\otimes\K.
$$
Hence 
$\tilde{V}_f^*(x\otimes K)\tilde{V}_f$ 
belongs to 
the algebra 
$\WRA\otimes\K.$
This shows that the inclusion relation
\begin{equation}
\tilde{V}_f^*(\WRAf\otimes K)\tilde{V}_f 
\subset \WRA\otimes\K \label{eq:6.13}
\end{equation}
holds.
Conversely, 
for 
$y\otimes K\in \WRA\otimes\K,$
we have
\begin{equation*}
 \tilde{V}_f(y\otimes K)\tilde{V}_f^* 
= (E_{A_f}\otimes 1_H)\tilde{V}_f(y\otimes K)\tilde{V}_f^* (E_{A_f}\otimes 1_H). 
\end{equation*}
As $\tilde{V}_f \in M(\WRAf\otimes\K)$
and
$y\otimes K\in \WRA\otimes\K\subset \WRAf\otimes\K,$
the element
$\tilde{V}_f(y\otimes K)\tilde{V}_f^*$ 
belongs to $\WRAf\otimes\K$
which is 
$(E_{A_f}\otimes 1_H)((\mathcal{O}_{(A_f)^t}\otimes \mathcal{O}_{A_f})\otimes\K)
(E_{A_f}\otimes 1_H).$
Hence 
$\tilde{V}_f(x\otimes K)\tilde{V}_f^*$
belongs to the algebra
$$
(E_{A_f}\otimes1_H)(
E_{A_f}\otimes 1_H)((\mathcal{O}_{(A_f)^t}\otimes \mathcal{O}_{A_f})\otimes\K)
(E_{A_f}\otimes 1_H)
$$
which is 
$$
E_{A_f}(\mathcal{O}_{(A_f)^t}\otimes \mathcal{O}_{A_f})E_{A_f}\otimes\K
=\WRAf\otimes\K.
$$
Hence 
$\tilde{V}_f(y\otimes K)\tilde{V}_f^*$ 
belongs to 
the algebra 
$\WRAf\otimes\K.$
This shows that the inclusion relation
\begin{equation}
\tilde{V}_f(\WRA\otimes K)\tilde{V}_f^* 
\subset \WRAf\otimes\K \label{eq:6.14}
\end{equation}
holds.
Since $\tilde{V}_f^*\tilde{V}_f = E_A\otimes1_H$
and
$E_A\WRA E_A =\WRA$, the inclusion relation
\eqref{eq:6.14} implies
\begin{equation}
\WRA\otimes K 
\subset \tilde{V}_f^*(\WRAf\otimes\K)\tilde{V}_f.
\label{eq:6.15}
\end{equation}
By \eqref{eq:6.13} and \eqref{eq:6.15}, 
we have
$
\tilde{V}_f^*(\WRAf\otimes\K)\tilde{V}_f=\WRA\otimes\K.
$
Therefore we have an isomorphism
$\Phi_f = \Ad(\tilde{V}_f^*): \WRAf\otimes\K \longrightarrow \WRA\otimes\K.
$

Since 
$\gamma^{A_f}_{(r,s)}(\tilde{U}_{j_k}) = \tilde{U}_{j_k}$
for $j_k \in \tilde{\Sigma}$ by Lemma \ref{lem:5.3},
we know 
$(\gamma^{A_f}_{(r,s)}\otimes\id)(\tilde{V}_f) = \tilde{V}_f.$
For $x\otimes K\in \WRAf\otimes\K,$ 
we have 
\begin{align*}
(\Phi_f\circ(\gamma^{A_f}_{(r,s)}\otimes\id))(x\otimes K)
= & \tilde{V}_f^*(\gamma^{A_f}_{(r,s)}(x)\otimes K) \tilde{V}_f \\
= & (\gamma^{A_f}_{(r,s)}\otimes\id)(\tilde{V}_f^*(x\otimes K) \tilde{V}_f) \\
= & (\gamma^{A_f}_{(r,s)}\otimes\id)(\Phi(x\otimes K)).
\end{align*}
Now
\begin{align*}
    \gamma^{A_f}_{(r,s)}(T_j^*\otimes S_k) 
 = &\gamma^{A_f}_{(r,s)}( 
(\tilde{T}_{j_{m_j}}\cdots \tilde{T}_{j_1} \tilde{T}_{j_0})^*\otimes
\tilde{S}_{k_0} \tilde{S}_{k_1}\cdots\tilde{S}_{k_{m_k}}) \\
 = &
\exp{(2\pi\sqrt{-1} (f_k s - f_j r))}(
(\tilde{T}_{j_{m_j}}\cdots \tilde{T}_{j_1} \tilde{T}_{j_0})^*\otimes
(\tilde{S}_{k_0} \tilde{S}_{k_1}\cdots\tilde{S}_{k_{m_k}}) ) \\
= &\alpha^{A^t,f}_r(T_j^*) \otimes \alpha^{A,f}_s(S_k).
\end{align*} 
Hence the restriction of $\gamma^{A_f}_{(r,s)}\otimes\id$
to the subalgebra $\WRA\otimes\K$ coincides with 
 $\gamma^{A,f}_{(r,s)}\otimes\id$
so that we conclude that 
$$\Phi_f\circ (\gamma^{A_f}_{(r,s)}\otimes\id)
=  (\gamma^{A,f}_{(r,s)}\otimes\id)\circ\Phi_f.
$$
\end{proof}
By using Proposition \ref{prop:4.5}, 
 we have the converse implication of Theorem \ref{thm:6.5}  in the following way.
\begin{theorem}\label{thm:6.6}
Let $A, B$ be irreducible, non-permutation  matrices with entries in $\{0,1\}.$
Suppose that there exist a continuous function   
$f :\bar{X}_A \longrightarrow \N$ and an isomorphism
$\Phi:\WRB\otimes\K\longrightarrow \WRA\otimes\K$ such that 
\begin{equation*}
\Phi\circ (\gamma^B_{(r,s)}\otimes\id)
=  (\gamma^{A, f}_{(r,s)}\otimes\id)\circ\Phi, \qquad (r,s) \in \T^2.
\end{equation*}
Then the two-sided topological Markov shifts 
$(\bar{X}_B,\bar{\sigma}_B)$ and 
$(\bar{X}_A,\bar{\sigma}_A)$ are flow equivalent.
\end{theorem}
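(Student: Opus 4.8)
The plan is to strip off the isomorphism produced by Theorem~\ref{thm:6.5} so that the isomorphism which remains falls exactly under the hypothesis of Proposition~\ref{prop:4.5}, and then to combine this with the elementary fact that a discrete suspension is flow equivalent to its base.

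First I would check that $A_f$ is again an irreducible, non-permutation matrix with entries in $\{0,1\}$: irreducibility of $A$ together with $f\ge 1$ makes the graph $\G_f$ strongly connected, while the non-permutation property of $A$ (some vertex of $\G$ has out-degree or in-degree at least $2$) passes to the corresponding boundary vertex of $\G_f$ (a vertex $j_{m_j}$, resp.\ $k_0$). If $f$ is not a function of finitely many coordinates one first normalises it via Corollary~\ref{cor:5.5}, replacing $A$ by a topologically conjugate higher-block matrix; this is harmless, since topological conjugacy implies flow equivalence. With $A_f$ in the right class, Theorem~\ref{thm:6.5} provides an isomorphism $\Phi_f:\WRAf\otimes\K\longrightarrow\WRA\otimes\K$ with $\Phi_f\circ(\gamma^{A_f}_{(r,s)}\otimes\id)=(\gamma^{A,f}_{(r,s)}\otimes\id)\circ\Phi_f$ for all $(r,s)\in\T^2$.

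Next I would form the composition $\Psi:=\Phi_f^{-1}\circ\Phi:\WRB\otimes\K\longrightarrow\WRAf\otimes\K$ and chase the two intertwining relations to obtain
$$
\Psi\circ(\gamma^B_{(r,s)}\otimes\id)
=\Phi_f^{-1}\circ(\gamma^{A,f}_{(r,s)}\otimes\id)\circ\Phi
=(\gamma^{A_f}_{(r,s)}\otimes\id)\circ\Phi_f^{-1}\circ\Phi
=(\gamma^{A_f}_{(r,s)}\otimes\id)\circ\Psi ,
\qquad (r,s)\in\T^2 .
$$
This places $\Psi$ exactly in the hypothesis of Proposition~\ref{prop:4.5} applied to the pair of irreducible non-permutation matrices $B$ and $A_f$, so that proposition yields that $(\bar{X}_B,\bar{\sigma}_B)$ and $(\bar{X}_{A_f},\bar{\sigma}_{A_f})$ are flow equivalent. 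Finally, since $(\bar{X}_{A_f},\bar{\sigma}_{A_f})$ is the discrete suspension of $(\bar{X}_A,\bar{\sigma}_A)$ by the ceiling function $f$, it is flow equivalent to $(\bar{X}_A,\bar{\sigma}_A)$ by the Parry--Sullivan description of flow equivalence \cite{PS}; transitivity of flow equivalence then gives the claim.

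I expect the only genuine (and still minor) obstacle to be the bookkeeping verifying that $A_f$ inherits irreducibility and the non-permutation property, since that is what makes Proposition~\ref{prop:4.5} legitimately applicable; everything else is a formal diagram chase built on Theorem~\ref{thm:6.5} and Proposition~\ref{prop:4.5}.
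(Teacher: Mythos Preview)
Your proposal is correct and follows essentially the same route as the paper: invoke Theorem~\ref{thm:6.5} to obtain $\Phi_f$, compose $\Phi_f^{-1}\circ\Phi$ to land in the hypothesis of Proposition~\ref{prop:4.5} for the pair $(B,A_f)$, and finish by noting the suspension is flow equivalent to its base. Your added care about verifying that $A_f$ is irreducible and non-permutation, and about normalising $f$ via Corollary~\ref{cor:5.5}, makes explicit points the paper leaves implicit (the latter reduction is in fact already absorbed into the proof of Theorem~\ref{thm:6.5}).
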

\begin{proof}
By Theorem \ref{thm:6.5}, 
there exists an isomorphism
$\Phi_f: \WRAf\otimes\K\longrightarrow \WRA\otimes\K$
of $C^*$-algebras such that 
\begin{equation*}
\Phi_f\circ(\gamma^{A_f}_{(r,s)}\otimes\id) =
(\gamma^{A,f}_{(r,s)}\otimes\id)\circ\Phi_f, \quad (r,s) \in \T^2.
\end{equation*}
We define the isomorphism
$\Phi_\circ=\Phi_f^{-1}\circ\Phi:
\WRB\otimes\K\longrightarrow \WRAf\otimes\K$
which satisfies
\begin{equation*}
\Phi_\circ\circ(\gamma^B_{(r,s)}\otimes\id) =
(\gamma^{A_f}_{(r,s)}\otimes\id)\circ\Phi_\circ, \quad (r,s) \in \T^2.
\end{equation*}
By Proposition \ref{prop:4.5}, 
we know that 
$(\bar{X}_B,\bar{\sigma}_B)$ and 
$(\bar{X}_{A_f},\bar{\sigma}_{A_f})$ are flow equivalent.
Since
$(\bar{X}_{A_f},\bar{\sigma}_{A_f})$
is a discrete suspension of
$(\bar{X}_A,\bar{\sigma}_A)$,
they are flow equivalent, so that  
$(\bar{X}_B,\bar{\sigma}_B)$ and 
$(\bar{X}_A,\bar{\sigma}_A)$ are flow equivalent.
\end{proof}
Therefore we have a characterization of flow equivalence in terms of 
the $C^*$-algebras $\WRA$ with their gauge actions with potentials. 
\begin{theorem}\label{thm:6.7}
Let $A, B$ be irreducible, non-permutation matrices with entries in $\{0,1\}.$
Two-sided topological Markov shifts 
$(\bar{X}_B,\bar{\sigma}_B)$ and 
$(\bar{X}_A,\bar{\sigma}_A)$ are flow equivalent
if and only if 
there exist an irreducible non-permutation matrix  $C$ with entries in $\{0,1\}$
and continuous functions
$f_A, f_B:\bar{X}_C \longrightarrow \N$
with its values in positive integers
such that 
there exist isomorphisms
\begin{gather}
\Phi_A: \WRA\otimes\K\longrightarrow \WRC\otimes\K
\quad\text{ satisfying }\quad
\Phi_A\circ(\gamma^A_{(r,s)}\otimes\id) =
(\gamma^{C,f_A}_{(r,s)}\otimes\id)\circ\Phi_A \label{eq:6.AC} \\
\intertext{and} 
\Phi_B: \WRB\otimes\K\longrightarrow \WRC\otimes\K
\quad\text{ satisfying }\quad
\Phi_B\circ(\gamma^B_{(r,s)}\otimes\id) =
(\gamma^{C,f_B}_{(r,s)}\otimes\id)\circ\Phi_B. \label{eq:6.BC}
\end{gather}
\end{theorem}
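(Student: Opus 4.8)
The plan is to assemble the theorem by combining Theorem~\ref{thm:6.5}, Theorem~\ref{thm:6.6}, the Parry--Sullivan description of flow equivalence from \cite{PS}, and the functoriality of the extended Ruelle algebra together with its gauge action under topological conjugacy (Proposition~\ref{prop:5.4}, or equivalently \cite[Theorem 1.1]{MaPre2017b}), using only that flow equivalence is an equivalence relation. No new analytic input is needed: the work has already been done in Theorems~\ref{thm:6.5} and~\ref{thm:6.6}.

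First, suppose that there are an irreducible non-permutation matrix $C$ with entries in $\{0,1\}$, continuous functions $f_A, f_B : \bar{X}_C\longrightarrow\N$ with values in positive integers, and isomorphisms $\Phi_A$, $\Phi_B$ satisfying \eqref{eq:6.AC} and \eqref{eq:6.BC}; I must produce a flow equivalence between $(\bar{X}_A,\bar{\sigma}_A)$ and $(\bar{X}_B,\bar{\sigma}_B)$. I apply Theorem~\ref{thm:6.6} with its matrix ``$A$'' taken to be $C$, its matrix ``$B$'' taken to be $A$, its ceiling function ``$f$'' taken to be $f_A$, and its isomorphism ``$\Phi$'' taken to be $\Phi_A:\WRA\otimes\K\longrightarrow\WRC\otimes\K$, which by \eqref{eq:6.AC} intertwines $\gamma^A\otimes\id$ with $\gamma^{C,f_A}\otimes\id$; Theorem~\ref{thm:6.6} then shows that $(\bar{X}_A,\bar{\sigma}_A)$ and $(\bar{X}_C,\bar{\sigma}_C)$ are flow equivalent. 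Applying Theorem~\ref{thm:6.6} in the same way to $\Phi_B$ and \eqref{eq:6.BC} shows that $(\bar{X}_B,\bar{\sigma}_B)$ and $(\bar{X}_C,\bar{\sigma}_C)$ are flow equivalent. Since flow equivalence is an equivalence relation, $(\bar{X}_B,\bar{\sigma}_B)$ and $(\bar{X}_A,\bar{\sigma}_A)$ are flow equivalent.

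Conversely, suppose that $(\bar{X}_B,\bar{\sigma}_B)$ and $(\bar{X}_A,\bar{\sigma}_A)$ are flow equivalent. By the Parry--Sullivan theorem \cite{PS} there are a topological Markov shift and continuous positive-integer-valued ceiling functions realizing both $(\bar{X}_A,\bar{\sigma}_A)$ and $(\bar{X}_B,\bar{\sigma}_B)$ as discrete suspensions of it; passing if necessary to a higher block presentation so that both ceiling functions depend only on the $0$th coordinate (as in the proof of Corollary~\ref{cor:5.5}), and noting that irreducibility, the non-permutation property, and the $\{0,1\}$ entry condition are preserved under such recoding, I obtain an irreducible non-permutation matrix $C$ with entries in $\{0,1\}$ and continuous functions $f_A, f_B : \bar{X}_C\longrightarrow\N$ depending only on the $0$th coordinate, with positive integer values, such that $C_{f_A}$ and $C_{f_B}$ are defined and
\begin{equation*}
(\bar{X}_A,\bar{\sigma}_A) \cong (\bar{X}_{C_{f_A}},\bar{\sigma}_{C_{f_A}}),
\qquad
(\bar{X}_B,\bar{\sigma}_B) \cong (\bar{X}_{C_{f_B}},\bar{\sigma}_{C_{f_B}})
\end{equation*}
as topological Markov shifts. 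The topological conjugacy $(\bar{X}_A,\bar{\sigma}_A)\cong(\bar{X}_{C_{f_A}},\bar{\sigma}_{C_{f_A}})$ together with Proposition~\ref{prop:5.4} applied with the trivial potential $f\equiv 1$ (for which $\gamma^{A,1}=\gamma^A$) yields an isomorphism $\WRA\cong\widetilde{\R}_{C_{f_A}}$ carrying $\gamma^A$ to $\gamma^{C_{f_A}}$; tensoring with $\id$ on $\K$ gives an isomorphism $\Psi_A:\WRA\otimes\K\longrightarrow\widetilde{\R}_{C_{f_A}}\otimes\K$ with $\Psi_A\circ(\gamma^A_{(r,s)}\otimes\id)=(\gamma^{C_{f_A}}_{(r,s)}\otimes\id)\circ\Psi_A$. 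On the other hand, Theorem~\ref{thm:6.5} applied to the matrix $C$ and the ceiling function $f_A$ provides an isomorphism $\Phi_{f_A}:\widetilde{\R}_{C_{f_A}}\otimes\K\longrightarrow\WRC\otimes\K$ with $\Phi_{f_A}\circ(\gamma^{C_{f_A}}_{(r,s)}\otimes\id)=(\gamma^{C,f_A}_{(r,s)}\otimes\id)\circ\Phi_{f_A}$. Setting $\Phi_A:=\Phi_{f_A}\circ\Psi_A:\WRA\otimes\K\longrightarrow\WRC\otimes\K$ produces an isomorphism satisfying \eqref{eq:6.AC}. Constructing $\Phi_B$ in exactly the same way from the conjugacy $(\bar{X}_B,\bar{\sigma}_B)\cong(\bar{X}_{C_{f_B}},\bar{\sigma}_{C_{f_B}})$ and from Theorem~\ref{thm:6.5} for $f_B$ gives \eqref{eq:6.BC}, which completes this direction.

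The only step requiring genuine care is the recoding in the converse direction: one must pass to a higher block presentation of the Parry--Sullivan matrix so that both ceiling functions depend only on the $0$th coordinate --- this is precisely what makes the matrices $C_{f_A}$, $C_{f_B}$ available and Theorem~\ref{thm:6.5} applicable --- while checking that the defining properties of $C$ survive and that the extended Ruelle algebra with its $\T^2$-action is carried across the recoding, all of which is furnished by the mechanism behind Corollary~\ref{cor:5.5}. The remaining bookkeeping --- keeping the direction of each intertwining relation straight, and observing that tensoring an equivariant isomorphism by $\id$ on $\K$ remains equivariant --- is routine.
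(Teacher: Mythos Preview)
Your proof is correct and follows essentially the same route as the paper's: both directions are assembled from Theorem~\ref{thm:6.5}, Theorem~\ref{thm:6.6}, the Parry--Sullivan theorem, and the equivariance of the extended Ruelle algebra under conjugacy (Proposition~\ref{prop:5.4} / \cite[Theorem 1.1]{MaPre2017b}). The only difference is that you make the higher-block recoding of the Parry--Sullivan matrix explicit so that both ceiling functions depend on the $0$th coordinate before invoking Theorem~\ref{thm:6.5}, whereas the paper absorbs this step into its application of Theorem~\ref{thm:6.5} (whose proof already begins with Corollary~\ref{cor:5.5}); this is a presentational, not a substantive, distinction.
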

\begin{proof}
Suppose that 
the two-sided topological Markov shifts 
$(\bar{X}_B,\bar{\sigma}_B)$ and 
$(\bar{X}_A,\bar{\sigma}_A)$ are flow equivalent.
By Parry--Sullivan \cite{PS}, 
there exist an irreducible non-permutation matrix $C$ with entries in $\{0,1\}$
and continuous functions
$f_A, f_B:\bar{X}_C \longrightarrow \N$
such that 
$(\bar{X}_A,\bar{\sigma}_A)$ and
$(\bar{X}_{C_{f_A}},\bar{\sigma}_{C_{f_A}})$ 
 are topologically conjugate,
and
$(\bar{X}_B,\bar{\sigma}_B)$ and
$(\bar{X}_{C_{f_B}},\bar{\sigma}_{C_{f_B}})$ 
 are topologically conjugate.
By \cite[Theorem 1.1]{MaPre2017b} and Theorem \ref{thm:6.5},
 we have isomorphisms
$\Phi_A: \WRA\otimes\K\longrightarrow \WRC\otimes\K$
and
$\Phi_B: \WRB\otimes\K\longrightarrow \WRC\otimes\K$
of $C^*$-algebras satisfying \eqref{eq:6.AC} and \eqref{eq:6.BC}, respectively.

The converse implication immediately follows from
Theorem \ref{thm:6.6}. 
\end{proof}
As a corollary we have
\begin{corollary}\label{cor:4.8}
Let $A, B$ be irreducible, non-permutation matrices with entries in $\{0,1\}.$
Two-sided topological Markov shifts 
$(\bar{X}_B,\bar{\sigma}_B)$ and 
$(\bar{X}_A,\bar{\sigma}_A)$ are flow equivalent
if and only if 
there exist continuous functions
$f_A:\bar{X}_A \longrightarrow \N$
and
$f_B:\bar{X}_B \longrightarrow \N$
such that 
there exists an isomorphism
$\Phi: \WRA\otimes\K \longrightarrow \WRB\otimes\K$ 
of $C^*$-algebras satisfying 
\begin{equation*}
\Phi\circ(\gamma^{A,f_A}_{(r,s)}\otimes\id)
=(\gamma^{B,f_B}_{(r,s)}\otimes\id)\circ\Phi,
\qquad
(r,s) \in \T^2.
\end{equation*}
\end{corollary}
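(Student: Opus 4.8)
The plan is to deduce the statement from Theorem~\ref{thm:6.7}, Theorem~\ref{thm:6.5} and Theorem~\ref{thm:6.6}, the only genuinely new ingredient being an amalgamation property of discrete suspensions. Throughout, for a continuous $h:\bar{X}_A\to\N$ I write $\widetilde{\R}_{A_h}$ for the extended Ruelle algebra of the discrete suspension $(\bar{X}_{A,h},\bar{\sigma}_{A,h})$; by Theorem~\ref{thm:6.5} it comes equipped with an isomorphism $\Phi_h:\widetilde{\R}_{A_h}\otimes\K\to\WRA\otimes\K$ satisfying $\Phi_h\circ(\gamma^{A_h}_{(r,s)}\otimes\id)=(\gamma^{A,h}_{(r,s)}\otimes\id)\circ\Phi_h$, and similarly for $B$.

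\emph{Sufficiency.} Suppose $\Phi:\WRA\otimes\K\to\WRB\otimes\K$ is an isomorphism with $\Phi\circ(\gamma^{A,f_A}_{(r,s)}\otimes\id)=(\gamma^{B,f_B}_{(r,s)}\otimes\id)\circ\Phi$ for some $f_A:\bar{X}_A\to\N$ and $f_B:\bar{X}_B\to\N$. Then $\Phi\circ\Phi_{f_A}:\widetilde{\R}_{A_{f_A}}\otimes\K\to\WRB\otimes\K$ satisfies $(\Phi\circ\Phi_{f_A})\circ(\gamma^{A_{f_A}}_{(r,s)}\otimes\id)=(\gamma^{B,f_B}_{(r,s)}\otimes\id)\circ(\Phi\circ\Phi_{f_A})$. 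Since $A_{f_A}$ is again irreducible and non-permutation, this is precisely the hypothesis of Theorem~\ref{thm:6.6} (with $A_{f_A}$, $B$, $f_B$ playing the roles of $B$, $A$, $f$ there), so $(\bar{X}_{A_{f_A}},\bar{\sigma}_{A_{f_A}})$ and $(\bar{X}_B,\bar{\sigma}_B)$ are flow equivalent. As $(\bar{X}_{A_{f_A}},\bar{\sigma}_{A_{f_A}})$ is a discrete suspension of $(\bar{X}_A,\bar{\sigma}_A)$, it is flow equivalent to $(\bar{X}_A,\bar{\sigma}_A)$, and hence $(\bar{X}_A,\bar{\sigma}_A)$ and $(\bar{X}_B,\bar{\sigma}_B)$ are flow equivalent.

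\emph{Necessity.} Assume $(\bar{X}_A,\bar{\sigma}_A)$ and $(\bar{X}_B,\bar{\sigma}_B)$ are flow equivalent. By Parry--Sullivan \cite{PS} (cf.\ the proof of Theorem~\ref{thm:6.7}) there are an irreducible non-permutation matrix $C$ with entries in $\{0,1\}$, continuous functions $g_A,g_B:\bar{X}_C\to\N$, and topological conjugacies $\varphi_A:\bar{X}_A\to\bar{X}_{C_{g_A}}$, $\varphi_B:\bar{X}_B\to\bar{X}_{C_{g_B}}$. The key observation is that discrete suspensions can be iterated: for $g:\bar{X}_C\to\N$ and $\tilde h:\bar{X}_{C_g}\to\N$, the map $[[x,k],l]\mapsto[x,\tilde h([x,0])+\cdots+\tilde h([x,k-1])+l]$ is a topological conjugacy from $(\bar{X}_{C_g})_{\tilde h}$ onto $\bar{X}_{C_H}$, where $H(x)=\sum_{k=0}^{g(x)-1}\tilde h([x,k])$. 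Now fix a continuous $F:\bar{X}_C\to\N$ with $F\ge g_A$ and $F\ge g_B$ pointwise (for instance $F=g_A+g_B$), and define $\tilde h_A:\bar{X}_{C_{g_A}}\to\N$ by $\tilde h_A([x,k])=1$ for $0\le k\le g_A(x)-2$ and $\tilde h_A([x,g_A(x)-1])=F(x)-g_A(x)+1$; this is a well-defined continuous $\N$-valued function with $\sum_{k=0}^{g_A(x)-1}\tilde h_A([x,k])=F(x)$, so that $(\bar{X}_{C_{g_A}})_{\tilde h_A}\cong\bar{X}_{C_F}$. Defining $\tilde h_B$ analogously from $g_B$ and $F$, and setting $f_A:=\tilde h_A\circ\varphi_A:\bar{X}_A\to\N$, $f_B:=\tilde h_B\circ\varphi_B:\bar{X}_B\to\N$, we obtain a chain of topological conjugacies $\bar{X}_{A_{f_A}}\cong(\bar{X}_{C_{g_A}})_{\tilde h_A}\cong\bar{X}_{C_F}\cong(\bar{X}_{C_{g_B}})_{\tilde h_B}\cong\bar{X}_{B_{f_B}}$.

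Finally I assemble the isomorphism. By \cite[Theorem~1.1]{MaPre2017b} the conjugacy $\bar{X}_{A_{f_A}}\cong\bar{X}_{B_{f_B}}$ induces an isomorphism $\Psi:\widetilde{\R}_{A_{f_A}}\to\widetilde{\R}_{B_{f_B}}$ with $\Psi\circ\gamma^{A_{f_A}}_{(r,s)}=\gamma^{B_{f_B}}_{(r,s)}\circ\Psi$, hence $\Psi\otimes\id$ intertwines $\gamma^{A_{f_A}}_{(r,s)}\otimes\id$ and $\gamma^{B_{f_B}}_{(r,s)}\otimes\id$. Putting $\Phi:=\Phi_{f_B}\circ(\Psi\otimes\id)\circ\Phi_{f_A}^{-1}:\WRA\otimes\K\to\WRB\otimes\K$ and using the intertwining properties of $\Phi_{f_A}$, $\Psi\otimes\id$ and $\Phi_{f_B}$ recorded above, one checks directly that $\Phi\circ(\gamma^{A,f_A}_{(r,s)}\otimes\id)=(\gamma^{B,f_B}_{(r,s)}\otimes\id)\circ\Phi$, which completes the proof. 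The only point demanding care is the necessity direction: one must verify that the functions $\tilde h_A,\tilde h_B$ are continuous on the suspension spaces, that the stated formula genuinely defines a topological conjugacy, and that $A_{f_A}$, $B_{f_B}$ remain irreducible non-permutation matrices, so that Theorem~\ref{thm:6.5} and \cite[Theorem~1.1]{MaPre2017b} apply.
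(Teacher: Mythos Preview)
Your proof is correct and follows essentially the same route as the paper. The only difference is in the necessity direction: the paper invokes Parry--Sullivan \cite{PS} directly in the symmetric form ``flow equivalent $\Leftrightarrow$ there exist $f_A:\bar{X}_A\to\N$, $f_B:\bar{X}_B\to\N$ with $(\bar{X}_{A_{f_A}},\bar{\sigma}_{A_{f_A}})\cong(\bar{X}_{B_{f_B}},\bar{\sigma}_{B_{f_B}})$'', whereas you start from the common-base formulation (the one stated in Section~2) and explicitly carry out the amalgamation via a common roof $F\ge g_A,g_B$ to recover that symmetric form. This is a standard symbolic-dynamics reduction, so the extra work buys self-containedness but no new ideas; once the conjugacy $\bar{X}_{A_{f_A}}\cong\bar{X}_{B_{f_B}}$ is in hand, both your argument and the paper's proceed identically via Theorem~\ref{thm:6.5}, \cite[Theorem~1.1]{MaPre2017b}, and Theorem~\ref{thm:6.6}.
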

\begin{proof}
By Parry--Sullivan \cite{PS}, 
the two-sided topological Markov shifts 
$(\bar{X}_B,\bar{\sigma}_B)$ and 
$(\bar{X}_A,\bar{\sigma}_A)$ are flow equivalent
if and only if 
there exist continuous functions
$f_A:\bar{X}_A \longrightarrow \N$
and
$f_B:\bar{X}_B \longrightarrow \N$
such that 
$(\bar{X}_{A_{f_A}},\bar{\sigma}_{A_{f_A}})$ 
 and
$(\bar{X}_{B_{f_B}},\bar{\sigma}_{B_{f_B}})$ 
 are topologically conjugate.
The assertion follows from \cite[Theorem 1.1]{MaPre2017b} and Theorem \ref{thm:6.5},
 Theorem \ref{thm:6.6}. 
\end{proof}

\section{Detail statement of Theorem \ref{thm:6.5}}
In this final section, 
we describe  a detailed statement of
Theorem \ref{thm:6.5}.
We keep the assumption that  $A$ is an irreducible, non-permutation 
matrix with entries in $\{0,1\},$
and $f:\bar{X}_A \longrightarrow \N$ is a continuous function such that 
$f = \sum_{j=1}^N f_j \chi_{U_j(0)}$
for some positive integers $f_j.$
Let $e_i, i\in \N$ be a sequence of vectors of complete orthonormal basis of the separable infinite dimensional Hilbert space $H =\ell^2(\N).$
We fix $j =1,\dots,N.$
For $k=0,1,\dots,m_j,$ where $m_j = f_j -1,$
we set 
$$
\N_k = \{ n \in \N \mid n \equiv k \,  ({\operatorname{mod}}\,\, f_j) \}
$$
so that we have a disjoint union 
$\N = \N_0 \cup \N_1 \cup\cdots \cup \N_{m_j}.$
We write 
$$
\N_k = \{ 1_k, 2_k,3_k, \dots \}
\quad
\text{ where }
1_k < 2_k <3_k< \dots.
$$
Define an isometry $s_{j_k}$ on $H$ by setting
\begin{equation}
s_{j_k} e_n = e_{n_k}, \qquad n \in \N, \,\, k=0,1,\dots,m_j.
 \label{eq: 7.1}
\end{equation}
The family 
$\{s_{j_k} \}_{k=0}^{m_j}$ satisfy \eqref{eq:6.11}.
We may construct the operator 
$\tilde{V}_f$ from them by the formula \eqref{eq:6.12}.
As in the proof of Theorem \ref{thm:6.5},
we define the isomorphism
$\Phi_f: \WRAf\otimes\K\longrightarrow \WRA\otimes\K$
of $C^*$-algebras by 
$\Phi_f(x\otimes K) = \tilde{V}_f^*(x\otimes k) \tilde{V}_f$
for $x \otimes K \in \WRAf.$
As $E_{A_f}\otimes 1_H = \tilde{V}_f \tilde{V}_f^*,$
we have
\begin{align*}
  \tilde{V}_f^* (E_{A_f}\otimes e_1)\tilde{V}_f  
=&
\sum_{j=1}^N\sum_{k=0}^{m_j} (\tilde{U}_{j_k}\tilde{U}_{j_{k+1}}\cdots \tilde{U}_{j_{m_j}})^*
(\tilde{U}_{j_k}\tilde{U}_{j_{k+1}}\cdots \tilde{U}_{j_{m_j}})
       \otimes s_{j_k} e_1 s_{j_k}^* \\ 
=&
\sum_{j=1}^N \{
(T_j T_j^* \otimes S_j^* S_j)    
\otimes (\sum_{k=0}^{m_j}s_{j_k} e_1 s_{j_k}^*) \}.
\end{align*}
Since $s_{j_k} e_1 s_{j_k}^* = e_{1_k},$
we obtain the following formula in the $K_0$-group $K_0(\WRA)$
\begin{equation*}
[\tilde{V}_f^* (E_{A_f}\otimes e_1)\tilde{V}_f ] 
=
\sum_{j=1}^N m_j [T_j T_j^* \otimes S_j^* S_j]    
\quad \text{ in } K_0(\WRA).
 \end{equation*}
In the group $K_0(\WRA),$
we define 
\begin{equation*}
f([E_A]) := 
\sum_{j=1}^N m_j [T_j T_j^* \otimes S_j^* S_j]    
\quad \text{ in } K_0(\WRA).
\end{equation*}
Recall that $\C$ denotes the commutative $C^*$-algebra on $H$ consisting of diagonal operators with respect to the basis $\{e_n\}_{n \in \N}.$
The construction of the operator $\tilde{V}_f$ tells us that 
the equality
\begin{equation*}
\tilde{V}_f^* (C(\bar{X}_A)\otimes\C)\tilde{V}_f =C(\bar{X}_A)\otimes\C
\end{equation*}
holds. Therefore we obtain the following proposition
\begin{proposition}\label{prop:7.1}
Let $A$ be an irreducible, non-permutation matrix
with entries in $\{0,1\}.$
Then there exists an isomorphism
$\Phi_f: \WRAf\otimes\K\longrightarrow \WRA\otimes\K$
of $C^*$-algebras such that 
\begin{gather*}
\Phi_f(C(\bar{X}_{A_f})\otimes\C) =C(\bar{X}_A)\otimes\C, \\
\Phi_f\circ(\gamma^{A_f}_{(r,s)}\otimes\id) =
(\gamma^{A,f}_{(r,s)}\otimes\id)\circ\Phi_f, \quad (r,s) \in \T^2, \\
\Phi_{f*}([E_{A_f}]) = f([E_A]) \quad \text{ in } K_0(\WRA).
 \end{gather*}
\end{proposition}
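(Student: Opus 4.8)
The plan is to re-run the construction from the proof of Theorem~\ref{thm:6.5} verbatim, but now starting from the \emph{concrete} isometries $s_{j_k}$ defined by $s_{j_k}e_n = e_{n_k}$ (recall $f = \sum_{j=1}^N f_j\chi_{U_j(0)}$ is the standing hypothesis of this section). Since the family $\{s_{j_k}\}_{k=0}^{m_j}$ satisfies \eqref{eq:6.11}, the operator $\tilde{V}_f$ produced from it by \eqref{eq:6.12} is a partial isometry with $\tilde{V}_f\tilde{V}_f^* = E_{A_f}\otimes 1_H$ and $\tilde{V}_f^*\tilde{V}_f = E_A\otimes 1_H$, and $\Phi_f := \Ad(\tilde{V}_f^*)$ is an isomorphism $\WRAf\otimes\K\to\WRA\otimes\K$ intertwining $\gamma^{A_f}_{(r,s)}\otimes\id$ with $\gamma^{A,f}_{(r,s)}\otimes\id$. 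All of this is exactly Theorem~\ref{thm:6.5}, which held for any choice satisfying \eqref{eq:6.11}, so the only new points to establish are $\Phi_f(C(\bar{X}_{A_f})\otimes\C) = C(\bar{X}_A)\otimes\C$ and $\Phi_{f*}([E_{A_f}]) = f([E_A])$.

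For the diagonal, set $\tilde{W}_{j,k} := \tilde{U}_{j_k}\tilde{U}_{j_{k+1}}\cdots\tilde{U}_{j_{m_j}}$, so $\tilde{V}_f = \sum_{j=1}^N\sum_{k=0}^{m_j}\tilde{W}_{j,k}\otimes s_{j_k}^*$. Since each $s_{j_k}$ maps the orthonormal basis $\{e_n\}$ into itself, $\Ad(s_{j_k})$ and $\Ad(s_{j_k}^*)$ preserve $\C$. Since each $\tilde{S}_{j_k}$ (resp.\ $\tilde{T}_{j_k}$) normalizes the canonical diagonal of $\OAf$ (resp.\ $\OTAf$), each $\tilde{U}_{j_k} = \tilde{T}_{j_k}^*\otimes\tilde{S}_{j_k}$, hence each $\tilde{W}_{j,k}$, normalizes the canonical maximal abelian subalgebra $C(\bar{X}_{A_f})$ of $\WRAf$; and by Lemma~\ref{lem:6.2} the source projection $\tilde{W}_{j,k}^*\tilde{W}_{j,k} = T_jT_j^*\otimes S_j^*S_j$ and the range projection $\tilde{W}_{j,k}\tilde{W}_{j,k}^* = \tilde{T}_{j_k}^*\tilde{T}_{j_k}\otimes\tilde{S}_{j_k}\tilde{S}_{j_k}^*$ both already lie in that diagonal. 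As the range projections are pairwise orthogonal and commute with $C(\bar{X}_{A_f})$, the cross terms in $\tilde{V}_f^*(a\otimes d)\tilde{V}_f$ (with $a\in C(\bar{X}_{A_f})$, $d\in\C$) drop out, leaving $\sum_{j,k}(\tilde{W}_{j,k}^*a\tilde{W}_{j,k})\otimes(s_{j_k}ds_{j_k}^*)$, in which each summand lies in $C(\bar{X}_A)\otimes\C$ (the first factor is diagonal and dominated by $T_jT_j^*\otimes S_j^*S_j\le E_A$, the second is diagonal). The symmetric computation for $\tilde{V}_f(\cdot)\tilde{V}_f^*$ gives the opposite inclusion, so $\Phi_f(C(\bar{X}_{A_f})\otimes\C) = C(\bar{X}_A)\otimes\C$.

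For the $K$-theory, represent $[E_{A_f}] = [1_{\WRAf}]$ in $K_0(\WRAf\otimes\K)$ by $E_{A_f}\otimes e_1$, so that $\Phi_{f*}([E_{A_f}]) = [\tilde{V}_f^*(E_{A_f}\otimes e_1)\tilde{V}_f]$. Using $E_{A_f}\tilde{W}_{j,k} = \tilde{W}_{j,k}$, the orthogonality of the $\tilde{W}_{j,k}\tilde{W}_{j,k}^*$, Lemma~\ref{lem:6.2} (so that $\tilde{W}_{j,k}^*\tilde{W}_{j,k} = T_jT_j^*\otimes S_j^*S_j$ is independent of $k$), and the identity $s_{j_k}e_1s_{j_k}^* = e_{1_k}$ that holds precisely because of the explicit definition of $s_{j_k}$, one computes
\[
\tilde{V}_f^*(E_{A_f}\otimes e_1)\tilde{V}_f = \sum_{j=1}^N (T_jT_j^*\otimes S_j^*S_j)\otimes\Big(\sum_{k=0}^{m_j}e_{1_k}\Big).
\]
Passing to $K_0(\WRA)$ and counting the rank of the finite-rank projection $\sum_{k=0}^{m_j}e_{1_k}\in\C$ identifies the right-hand side with $f([E_A])$ as defined above, which is the third claimed equality.

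The main obstacle is the second paragraph: one must verify carefully that the $\tilde{U}_{j_k}$ really do normalize the canonical diagonal of $\WRAf$, and — more delicately — that the compressions $\tilde{W}_{j,k}^*a\tilde{W}_{j,k}$ land in the diagonal of the \emph{subalgebra} $\WRA\subseteq\WRAf$ rather than merely in the diagonal of $\WRAf$; as in the proof of Theorem~\ref{thm:6.5}, this is exactly where the identity $E_{A_f}(P_{A^t}\otimes P_A) = E_A$ of Lemma~\ref{lem:6.4} and the identification of $C(\bar{X}_A)$ with the $E_A$-compression of the canonical diagonal of $\OTA\otimes\OA$ enter. Everything else reduces to the bookkeeping already carried out in the proof of Theorem~\ref{thm:6.5} and in the displayed computations preceding the statement.
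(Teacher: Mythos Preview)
Your proposal is correct and follows essentially the same approach as the paper: you choose the concrete isometries $s_{j_k}e_n=e_{n_k}$, invoke the construction and equivariance already established in Theorem~\ref{thm:6.5}, carry out the same $K_0$-computation of $\tilde{V}_f^*(E_{A_f}\otimes e_1)\tilde{V}_f$, and justify the diagonal preservation via the normalizing property of the $\tilde{U}_{j_k}$ together with Lemma~\ref{lem:6.4}. The only difference is that the paper simply asserts the diagonal claim (``The construction of the operator $\tilde{V}_f$ tells us that\ldots''), whereas you spell out the orthogonality-of-range-projections argument that makes the cross terms vanish; this is a welcome elaboration rather than a different method.
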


\bigskip

{\it Acknowledgments:}
This work was supported by JSPS KAKENHI Grant Number 15K04896.


\end{document}